\theoremstyle{plain}
\newtheorem{theorem}{Theorem}[section]
\newtheorem{lemma}[theorem]{Lemma}
\newtheorem{corollary}[theorem]{Corollary}
\newtheorem{remark}[theorem]{Remark}
\newtheorem{notation}[theorem]{Notation}
\newtheorem{definition}[theorem]{Definition}
\numberwithin{equation}{section}
\newcommand{\scrC}{\mathscr{C}}
\newcommand{\energy}{\mathrm{E}}
\newcommand{\p}{\partial}
\newcommand{\Dt}{D_t}
\newcommand{\blue}[1]{{\color{blue}#1}}
\newcommand{\red}[1]{{\color{red}#1}}
\newcommand{\purple}[1]{{\color{purple}#1}}
\newcommand{\R}{{\mathbb R}}
\newcommand{\N}{{\mathbb N}}
\newcommand{\hatL}{\hat{L}}
\newcommand{\tdr}{\tilde{r}}
\newcommand{\tdu}{\tilde{u}}
\newcommand{\tdp}{\tilde{\pi}}
\newcommand{\tdL}{\tilde{L}}
\newcommand{\Wnorm}[2]{H^{#1,#2}}
\newcommand{\dx}{\,\mathrm{d}x}
\newcommand{\lref}[1]{Lemma \ref{#1}}
\newcommand{\rref}[1]{Remark \ref{#1}}
\newcommand{\nref}[1]{Notation \ref{#1}}
\begin{document}
\title[Israel-Stewart]{The linearized Israel-Stewart equations with a physical vacuum boundary}
\author[Zhong]{Runzhang Zhong$^{* \#}$}

\thanks{$^{*}$Vanderbilt University, Nashville, TN, USA.
\texttt{runzhang.zhong@vanderbilt.edu}}

\thanks{$^{\#}$RZ gratefully acknowledges support from NSF grant DMS-2107701.}

\begin{abstract}
      In this article, we consider the Israel-Stewart equations of relativistic viscous fluid dynamics with bulk viscosity. We investigate the evolution of the equations linearized about solutions that satisfy the physical vacuum boundary condition and establish local well-posedness of the corresponding Cauchy problem.
\bigskip


\bigskip


\end{abstract}
\maketitle
\tableofcontents
\section{Introduction}

The study of relativistic fluids comprises a large and active field of study in mathematics and physics. Many important physical systems in astrophysics, high-energy physics, and cosmology are described with the formalism of relativistic hydrodynamics. See, e.g., \cite{rezzolla2013relativistic} for an overview of the topic and its applications. Relativistic hydrodynamics is also a fertile source of mathematical problems, see e.g. \cite[Chapter IX]{choquet2008general} and the reviews \cite{abbrescia2023relativistic,disconzi2024recent}.

On the mathematical side, most of the community's focus has been on studies of relativistic ideal fluids, i.e., fluids without viscosity, which are described by the well-known relativistic Euler equations. While many important problems remain regarding the mathematical properties of the relativistic Euler equations (see the two recent reviews \cites{abbrescia2023relativistic,disconzi2024recent} for a discussion of recent progress and open problems), there are also compelling reasons to investigate relativistic fluids \emph{with viscosity.} Indeed, there are two important situations where viscosity is needed for an accurate description: 
\begin{itemize}
    \item The quark-gluon plasma. The quark-gluon plasma is an exotic state of matter that forms at extreme temperatures and densities achieved in modern-day particle colliders such as the Large Hadron Collider and the Relativistic Heavy Ion Collider. During a brief period of time after the collision, matter deconfines and quarks and gluons enter into a plasma-like state. Study of this plasma-like state provides researchers with a unique opportunity to understand properties of matter at extreme conditions. In fact, the discovery of the quark-gluon plasma was named by the American Physical Society as one of the ten most important discoveries in physics in the first decade of this century \cite{Chodos-APSNewsQGP-2010}. Careful analysis of data from these experiments reveals that the quark-gluon plasma behaves like a relativistic fluid with viscosity \cite{Romatschke:2017ejr}. 
    \item Neutron star mergers. The recent detection of gravitational waves produced in the merger of binary neutron stars \cite{TheLIGOScientific:2017qsa} opened the door to a whole range of previously inaccessible phenomena and has the potential to revolutionize our understanding of nature \cite{Dexheimer:2020zzs,MUSES:2023hyz}. While for a long time theoretical studies of mergers of neutron stars assumed that they could be reasonably modeled by the Einstein-Euler system, recent state-of-the-art numerical simulations convincingly show that viscous effects can be relevant for such systems \cite{Alford:2017rxf}. In particular, effects of viscosity on the gravitational-wave signal emitted by mergers of neutron stars are expected to be within the sensitivity range of the next-generation of gravitational wave detectors \cite{Most:2018eaw,Most:2019onn}.
\end{itemize}

Despite their importance in physics, the mathematical study of theories of relativistic fluids with viscosity has traditionally been slow. The last decade, however, has seen increasing interest from the mathematical community in theories of relativistic viscous fluids, with several rigorous results finally established, including local \cite{bemfica2019causality,Bemfica:2020zjp,Bemfica-Disconzi-Graber-2021,Bemfica-Disconzi-Rodriguez-Shao-2021,Freistuhler-Temple-2014,Freistuhler-Temple-2017,Freistuhler-Temple-2018,Freistuhler-2020} and global \cite{Sroczinski-2019,Sroczinski-2020,Freistuhler-Sroczinski-2021,Freistuhler-Reintjes-Sroczinski-2022,Freistuhler-Sroczinski-2024-arxiv} well-posedness, formation of singularities \cite{Freistuhler-2023-arxiv-2,Disconzi-Hoang-Radosz-2023}, existence of viscous shocks \cite{Freistuhler-2021,pellhammer2022oscillating,Pellhammer-2023}, and the study of the Einstein constraint equations \cite{Disconzi-Isenberg-Maxwell-2024-arxiv} for viscous matter.

In this paper, we initiate the study of relativistic \emph{viscous} hydrodynamics in a free-boundary setting, thus providing a further step on the mathematical development of relativistic fluids with viscosity. In the absence of viscosity, the study of free-boundary relativistic fluids goes back to the '30s with the seminal works of Tolman, Oppenheimer, and Volkoff \cite{Tolman:1939jz,Tolman:1934za,Oppenheimer:1939ne}, wherein they derived the now-famous TOV equations that describe a general relativistic static star, and Oppenheimer and Snyder \cite{Oppenheimer:1939ue}, who introduced the first model of gravitational collapse and subsequent formation of a black hole in general relativity, in what is now known as the Oppenheimer-Snyder model. Subsequent works on relativistic free-boundary ideal fluids include \cite{Lindblom-1988,Rendall-Schmidt-1991,Makino-1998,LeFloch-Ukai-2009,Makino-2016,Makino-2017,Makino-2018,Heilig-1995,Makino-2019-arxiv,Makino-2019-2-arxiv,Hadzic-Lin-2021,Brauer-Karp-2011,Brauer-Karp-2014,Friedrich-1998,Oliynyk-2012-1,Ginsberg-2019,Ginsberg-Lindblad-2023,Oliynyk-2017,Oliynyk-2019-arxiv,Miao-Shahshahani-Wu-2021,Miao-Shahshahani-2024,Jang-LeFloch-Masmoudi-2016,Trakhinin-2009,Hadzic-Shkoller-Speck-2019,
Rendall-1992,Disconzi-2015,Hao-Huo-Miao-2024-arxiv,Makino-2021}. See section 1.6 of \cite{disconzi2022relativistic} for a review of previous works on free-boundary relativistic fluids without viscosity.

Inclusion of viscosity is important in relativistic free-boundary models in order to describe dissipative effects in star evolution, especially  in the case of neutron stars (see \cite{Duez:2018jaf,Duez:2004nf}, references therein, and the above discussion), and, more recently, accretion disks near black holes \cite{Chandra:2015iza,Foucart:2017axc,Cordeiro:2023ljz}, where viscous effects can be important to understand some measurements made by the Event Horizon Telescope \cite{EHT-Astro-2022}. This work is the first one to mathematically address a model of relativistic viscous hydrodynamics in a free-boundary context. More precisely, we investigate equations of Israel-Stewart type originally introduced in \cite{Denicol:2012cn} and that are widely used in applications devoted to viscous effects in the quark-gluon plasma \cite{Romatschke:2017ejr} and neutron star mergers \cite{Shibata:2017xht,Shibata:2017jyf}. We focus on the case when the fluid satisfies the well-known physical vacuum boundary condition, which in the absence of viscosity is needed to ensure that the fluid-vacuum interface can accelerate. We remark that when viscosity is present, further conditions, in comparison to the non-viscous case, are needed to ensure that the free boundary moves with bounded non-zero acceleration. In other words, our first task below is to determine what the physical vacuum boundary condition means in the viscous context, i.e., what assumptions are needed in order to guarantee that the free boundary can accelerate.

As said, the model we will investigate was originally introduced in \cite{Denicol:2012cn}. This model is sometimes referred to as the DNMR model after its authors, although for historical reasons it is more often called a Israel-Stewart, Israel-Stewart-like, or Israel-Stewart-Mueller model due to its similarities with a model of relativistic viscous fluids introduced by Israel and Stewart in the '70s \cite{Israel:1976tn,Israel:1979wp,Stewart-1977,Israel-Stewart-1979,Stewart-Stewart-1976}, which in turn was inspired by a model of non-relativistic viscous fluids introduced earlier by Mueller \cite{Mueller-1967}. The DNMR or Israel-Stewart model contains a wide range of fluid variables, including three types of viscosity, namely, bulk and shear viscosity and heat conductivity. The local well-posedness of these equations has been obtained only in the special case when bulk viscosity is the only viscous contribution. Thus, it would be premature to study it in a free boundary context with all viscous contributions, and we restrict ourselves to the situation where only bulk viscosity is present. 

In comparison with previous works on ideal fluids with a physical vacuum boundary, the main new difficulty we encounter is that viscosity in Israel-Stewart models is governed by a new variable satisfying an additional equation of motion, see the third equation in \eqref{E:BasicPre}. This is a transport equation sourced by the divergence of the fluid's velocity, and a simple derivative counting shows that direct application of transport estimates leads to a derivative loss. In the case of a physical vacuum boundary, this problem is compounded by the need to control the fluid variables in weighted spaces that have to be introduced due to the fluid's degeneracy on the free boundary. We overcome this difficulty by uncovering some special structures at the level of the linearized equations. 

It turns out that the linearized equations by themselves already present a rich and complex structure, and their local well-posedness cannot be derived from standard techniques. Thus, here we focus on establishing local well-posedness for the linearized Israel-Stewart equations with a physical vacuum boundary and bulk viscosity. The solution to the linearized equations will be the basis for our solution to the full nonlinear problem that we will present in an upcoming work \cite{Zhong-in_prep}. In order to shorten the presentation, we will focus on aspects of the problem that are genuinely new compared to the case of an ideal fluid. Thus, we will be brief when arguments are a simple modification of those employed in the case of an ideal fluid with a physical vacuum boundary, mostly notably to those found in \cite{Jang_Masmoudi_2014,disconzi2022relativistic,ifrim2023compressible}.

\subsection{The Equations of Motion}
In this article, we consider the equations of Israel-Stewart type with bulk viscosity proposed in \cite{Denicol:2012cn}, which is the most widely used model of relativistic fluids with viscosity \cite{Romatschke:2017ejr}. We consider the situation when the only viscous contribution comes from bulk viscosity, thus ignoring 
the dissipation due to shear and heat flux. The fluid is characterized by an energy-momentum tensor
\begin{equation}\label{EnergyMomentum}
    T_{\mu\nu} = \varrho u_\mu u_\nu + (p + \Pi)\Delta_{\mu\nu},
\end{equation} 
and a conserved quantity called rest-mass current
\begin{equation*}
    J^\mu = n u^\mu,
\end{equation*}
where $\varrho$ is the energy density, $\Pi$ is the bulk viscosity, $n$ is the baryon density, $p$ is the pressure subject to the equation of state 
\begin{equation*}
    p = p(\varrho,n),
\end{equation*}
$u$ is the $4$-velocity of the fluid subject to the normalization condition
\begin{equation}\label{Normalization}
    u^\mu u_\mu = -1,
\end{equation}
and the $2$-tensor $\Delta_{\mu\nu}=g_{\mu\nu} + u_\mu u_\nu$ is the projection onto the space orthogonal to $u$. Additionally, 
$g$ is the space-time metric, which we will take to be the Minkowski metric in that this already contains many difficulties in the study of a free-boundary fluids which we introduce below.

The evolution of the fluid is given by the system
\begin{equation}\label{E:BasicPre}
    \left\{ \begin{aligned}
    & \p_\mu T^{\mu\nu} = 0,
    \\
    & \p_\mu J^\mu = 0 ,
    \\
    & \tau_\Pi u^\mu \p_\mu \Pi + \Pi  + \lambda \Pi^2 + \zeta \p_\mu u^\mu = 0,
\end{aligned}\right.
\end{equation}
where the first equation is the conservation of energy and momentum, the second equation is the conservation of mass and the third equation is the additional relaxation-type equation satisfied by the bulk scalar $\Pi$, in which $\tau_\Pi = \tau_\Pi(\varrho)$ is the bulk relaxation time,  $\lambda=\lambda(\varrho)$ is a transport coefficient and $\zeta=\zeta(\varrho)$ is the coefficient of bulk viscosity.

Throughout and including the above, we adopt the rectangular spacetime coordinate $\{x^\alpha\}_{\alpha = 0,1,2,3}$ of $\R^{1+3}$, where we identify $x_0$ with the time coordinate $t$. We use $x=(x^1,x^2,x^3)$ to denote the spatial coordinates. Einstein sum convention is employed. The vector field $\p_\alpha$ is a short-hand expression for $\frac{\p}{\p x^\alpha}$. Greek indices will vary from $0$ to $3$, and Latin indices will vary from $1$ to $3$. Indices are raised and lowered with the Minkowski metric $g$.
\subsection{Physical Vacuum Boundary}
It was shown in \cite{bemfica2019causality} that the system is strongly hyperbolic if the following causality condition is satisfied
\begin{equation}\label{Causality_condition}
    \biggl[\frac{\zeta}{\tau_\Pi} + n\left(\frac{\p p}{\p n}\right)_\varrho\biggr]\frac{1}{\varrho + p + \Pi} \leq 1 - \left(\frac{\p p}{\p \varrho}\right)_n,
\end{equation}
where $\rho + p + \Pi>0$ and $\tau_\Pi>0$. One can use the standard theory of FOSH developed by Friedrichs-Lax-Kato \cites{friedrichs1954symmetric,lax2005weak,kato1975cauchy} to prove the local well-posedness.

In this article, we consider the physical situation where vacuum states are allowed, i.e., the density $\varrho$ is allowed to vanish, and the boundary has finite non-zero acceleration. Then, the gas is located in the region where the density is non-vanishing, i.e., the following moving domain at time $t$,
\begin{equation*}
    \Omega_t = \{x \in \R^d \vert \rho(t,x)>0\},
\end{equation*}
whose boundary denoted by $\Gamma_t:=\p\Omega_t$ is called the vacuum boundary at time $t$, which is advected by the fluid velocity $u$. So, the dynamics of the fluid is confined within
\begin{equation*}
    \mathscr{D}_T = \bigcup_{t\in[0,T]}\Omega_t
\end{equation*}
for the time existence $T>0$ of the solutions.

We will consider the gas where the pressure $p$ is determined by the density $\varrho$ through the following equation of state,
\begin{equation}\label{Equation_of_state}
    p(\varrho) = \varrho^{\kappa + 1},
\end{equation}
where $\kappa > 0$. Such gas is called a barotropic gas. With the equation of state \eqref{Equation_of_state}, the variable $n$ gets decoupled from the system \eqref{E:BasicPre}, which, with the energy-momentum tensor $T$ given by \eqref{EnergyMomentum}, can be written as the following Cauchy problem
\begin{equation}\label{E:Basic}
    \left\{ \begin{aligned}
    & u^\mu\p_\mu \varrho + (\varrho^{\kappa + 1} + \varrho + \Pi)\p_\mu u^\mu  = 0,
    \\
    & (\varrho^{\kappa + 1} + \varrho + \Pi) u^\mu\p_\mu u^\alpha + (\kappa + 1)\varrho^\kappa\Delta^{\alpha\mu}\p_\mu \varrho + \Delta^{\alpha\mu}\p_\mu\Pi = 0,
    \\
    & \tau_\Pi u^\mu \p_\mu \Pi + \Pi + \lambda \Pi^2 + \zeta \p_\mu u^\mu = 0,\\
    & u^\mu u_\mu = -1,\\
    & (\varrho, u,\Pi)|_{t=0}=(\rho_0,u_0,\Pi_0),
\end{aligned}\right.
\end{equation}
where the first two equations in the system \eqref{E:Basic} is derived from the projections of the first equation in the system \eqref{E:BasicPre} onto the space parallel and orthogonal to $u$ and $\rho_0,u_0,\Pi_0$ are initial data for the system. Due to the absence of matter ($\varrho=0$) on the free boundary $\Gamma_t$, it is natural to assume that the bulk viscosity $\Pi$ also vanishes on $\Gamma_t$. Then, the system \eqref{E:Basic} becomes degenerate due to the vanishing of both $\varrho$ and $\Pi$. In this case, the classical theories no longer apply. 

It turns out that the decay rates of $\varrho$ and $\Pi$ play a crucial role in the problem. In view of finite speed of propagation of hyperbolic system, the behavior of the fluid away from $\Gamma_t$ is essentially the same as the non-free-boundary case and thus can be solved with classical theory. Near the free boundary $\Gamma_t$, obtained from the second equation of \eqref{E:Basic}, the fluid evolves with the $4$-acceleration
\begin{equation}\label{Four_acceleration}
    a^\alpha = u^\mu\p_\mu u^\alpha = -\frac{(\kappa + 1)\varrho^\kappa\Delta^{\alpha\mu}\p_\mu\varrho + \Delta^{\alpha\mu}\p_\mu\Pi}{\varrho^{\kappa + 1} + \varrho + \Pi}.
\end{equation}
We will consider appropriate decay rates that allow $\Gamma_t$ to move with a bounded non-zero acceleration. We denote by $d(t,x) := \mathrm{dist}(x,\Gamma_t)$
the distance from the free boundary $\Gamma_t$ at time $t$. Assume that at the leading order $\varrho$ and $\Pi$ decays like a power of $d$, i.e.,
\begin{equation}\label{Decay_rate_assumption}
    \varrho\simeq d^\sigma, \Pi\simeq d^\eta
\end{equation}
for some $\sigma,\eta>0$, where by $f\simeq g$ we mean there exists a constant $C>0$ such that
\begin{equation*}
    \frac{1}{C}g\leq f \leq Cg.
\end{equation*}
Under this assumption, from \eqref{Four_acceleration}, the $4$-acceleration behaves like
\begin{equation*}
    a\simeq\begin{cases}
    d^{\sigma\kappa-1} + d^{\eta-\sigma -1} \,\,& \sigma\leq\eta
    \\
    \infty \,\,&\sigma > \eta
\end{cases}.
\end{equation*}
To obtain a bounded nonzero acceleration of $\Gamma$, we either have 
\begin{equation}\label{Physical_vacuum_boundary_condition}
    \begin{cases}
        \sigma = \frac{1}{\kappa},\\
        \eta \geq \sigma + 1,
    \end{cases}
\end{equation}
or
\begin{equation*}
    \begin{cases}
        \sigma \geq \frac{1}{\kappa},\\
        \eta = \sigma + 1.
    \end{cases}
\end{equation*}
We will define the physical vacuum boundary condition by \eqref{Physical_vacuum_boundary_condition} since this reduces to the case without viscosity in \cite{disconzi2022relativistic}. \textbf{In particular, we will solve the case $\eta = \frac{1}{\kappa}+2$}. We make this choice because any choice $\eta < \frac{1}{\kappa}+2$ leads to singular behavior of the coefficients of the linearized equation and the energy estimate cannot be closed with our method, and any choice $n>\frac{1}{\kappa}+2$ can be solved similarly as $\eta = \frac{1}{\kappa}+2$ under similar assumptions. We also make some technical assumptions on the coefficients of the relaxation-type equation $\Pi$ satisfies. We assume that $\tau_{\Pi}=\frac{1}{2+\frac{1}{\kappa}}$, $\zeta(\varrho) \simeq \varrho^{2 \kappa+1}$ is a smooth function and $\lambda=\lambda(\varrho)$ is a smooth function with bounded derivatives. We remark that the above assumption is compatible with the causality condition \eqref{Causality_condition}. Under these assumptions, we introduce new variables
\begin{equation*}
    r = \varrho^\kappa, \pi = \Pi^{\frac{\kappa}{2\kappa + 1}}.
\end{equation*}
As a result, the decay rates of the new variable near the free boundary are $r,\pi\simeq d$.
Using $(r,u,\pi)$ as our new variables, we write \eqref{E:Basic} as follows,
\begin{subequations}\label{The_nonlinear_system}
    \begin{align}[left = \empheqlbrace\,]
        \label{rEquation}
    & u^\mu\p_\mu r + \kappa r a_1 \p_\mu u^\mu = 0,
    \\
    \label{uEquation}
    &  u^\mu\p_\mu u^\alpha + \left(1+\frac{1}{\kappa}\right)a_2\Delta^{\alpha\mu}\p_\mu r + \left(2+\frac{1}{\kappa}\right)ra_3\Delta^{\alpha\mu}\p_\mu\pi = 0,
    \\
    \label{pEquation}
    & u^\mu\p_\mu\pi + \pi + \lambda(r^\frac{1}{\kappa})\pi^{3+\frac{1}{\kappa}} + ra_4\p_\mu u^\mu = 0,\\
    \label{EulerianNormalization}
    & u^\mu u_\mu = -1,\\
    \label{EulerianBoundaryCondition}
    & r|_{\Gamma_t}=0,\pi|_{\Gamma_t} = 0,\\
    \label{EulerianInitialData}
    & (\varrho, u,\pi)|_{t=0}=(\rho_0,u_0,\pi_0),
    \end{align}
\end{subequations}
where
\begin{equation*}
    a_0 = \frac{\pi}{r}, a_1 = r + 1 + r^2 a_0^{2+\frac{1}{\kappa}}, a_2 = \frac{1}{a_1},a_3 = a_0^{1+\frac{1}{\kappa}}a_2,a_4=\frac{\zeta(r^\frac{1}{\kappa})}{r\pi^{1+\frac{1}{\kappa}}}.
\end{equation*}
\section{Basic energy estimate for the  linearized equations} In this section, we formulate the linearized problem of system \eqref{The_nonlinear_system} and establish the basic energy estimate for the linearized equations, assuming suitable conditions on the background solution about which we linearize. This energy estimate is a crucial component in proving the local well-posedness of the linearized system. 

\subsection{The Linearized Equations} We consider a one-parameter family of solutions $(r_\lambda,u_\lambda,\pi_\lambda)$ to the system \eqref{The_nonlinear_system} such that $(r_\lambda,u_\lambda,\pi_\lambda)|_{\lambda=0}=(r,u,\pi)$. Then, the variables $(\tilde{r},\tilde{u},\tilde{\pi}) := \frac{d}{d\lambda}|_{\lambda = 0}(r_\lambda,u_\lambda,\pi_\lambda)$ defined on the moving domain $\Omega_t$ solve the following linearized problem,
\begin{subequations}\label{The_linearized_system}
    \begin{align}[left = \empheqlbrace\,]
        \label{trEquation}
        & u^\mu \partial_\mu \tilde{r} + \kappa r a_1 \partial_\mu \tilde{u}^\mu 
        + \partial_\mu r \tilde{u}^\mu + V_1 \tilde{r} + r^2 Z_1 \tilde{\pi} = f, \\
        \label{tuEquation}
        & u^\mu \partial_\mu \tilde{u}^\alpha + \left(1 + \frac{1}{\kappa}\right) a_2 \Delta^{\alpha\mu} \partial_\mu \tilde{r} 
        + \left(2 + \frac{1}{\kappa}\right) r a_3 \Delta^{\alpha\mu} \partial_\mu \tilde{\pi} \nonumber \\
        & \quad + (V_2)^\alpha \tilde{r} + (W_2)^\alpha_\beta \tilde{u}^\beta + (Z_2)^\alpha \tilde{\pi} = g^\alpha, \\
        \label{tpEquation}
        & u^\mu \partial_\mu \tilde{\pi} + r a_4 \partial_\mu \tilde{u}^\mu 
        + V_3 \tilde{r} + (W_3)_\alpha \tilde{u}^\alpha + Z_3 \tilde{\pi} = h, \\
        \label{EulerianOrthogonality}
        & u^\mu \tilde{u}_\mu = 0, \\
        \label{LinearizedInitialData}
        & (\tilde{r}, \tilde{u}, \tilde{\pi}) \big|_{t=0} = (\tilde{r}_0, \tilde{u}_0, \tilde{\pi}_0),
    \end{align}
\end{subequations}
where $V,W,Z$ are coefficients involving the background solution $(r,u,\pi)$ given as the following,
\begin{equation*}
    \begin{aligned}
        V_1 &= \kappa \left[2r + 1 + \left(1 - \frac{1}{\kappa}\right) r^2 a_0^{2 + \frac{1}{\kappa}}\right] \partial_\mu u^\mu, \\
        (V_2)^\alpha &= \begin{multlined}[t]
            \left\{\left(\frac{1}{\kappa} r a_0^{2 + \frac{1}{\kappa}} - 1\right) a_2 \left[\left(1 + \frac{1}{\kappa}\right) \partial_\mu r + \left(2 + \frac{1}{\kappa}\right) r a_0^{1 + \frac{1}{\kappa}} \partial_\mu \pi \right] \right. \\ 
            \left. - \frac{1}{\kappa}\left(2 + \frac{1}{\kappa}\right) a_0^{1 + \frac{1}{\kappa}} \partial_\mu \pi \right\} a_2 \Delta^{\alpha \mu}
        \end{multlined}, \\
        V_3 &= \frac{1}{\kappa} \left(r^{\frac{1}{\kappa} - 1} \pi^{3 + \frac{1}{\kappa}} \lambda'(r^{\frac{1}{\kappa}}) + \frac{\zeta'(r^{\frac{1}{\kappa}}) r^{\frac{1}{\kappa} - 1}}{\pi^{1 + \frac{1}{\kappa}}} \right), \\
        W_1 &= 0, \\
        (W_2)^{\alpha \beta} &= \left\{ \partial_\mu u^\alpha g^{\mu \beta} + a_2 \left(u^\mu g^{\alpha \beta} + u^\alpha g^{\mu \beta}\right) \left[\left(1 + \frac{1}{\kappa}\right) \partial_\mu r + \left(2 + \frac{1}{\kappa}\right) r a_0^{1 + \frac{1}{\kappa}} \partial_\mu \pi\right] \right\}, \\
        (W_3)^\beta &= g^{\mu \beta} \partial_\mu \pi, \\
        Z_1 &= \kappa \left(2 + \frac{1}{\kappa}\right) a_0^{1 + \frac{1}{\kappa}} \partial_\mu u^\mu, \\
        (Z_2)^\alpha &= \begin{multlined}[t]
            -\left(2 + \frac{1}{\kappa}\right) a_0^{\frac{1}{\kappa}} a_2 \Delta^{\alpha \mu} \biggl\{\left(1 + \frac{1}{\kappa}\right) r a_0 a_2 \partial_\mu r + \left(2 + \frac{1}{\kappa}\right) r^2 a_0^{2 + \frac{1}{\kappa}} a_2 \partial_\mu \pi \\
            - \left(1 + \frac{1}{\kappa}\right) \partial_\mu \pi \biggr\},
        \end{multlined} \\
        Z_3 &= 1 + \left(3 + \frac{1}{\kappa}\right) \pi^{2 + \frac{1}{\kappa}} \lambda(r^{\frac{1}{\kappa}}) - \left(1 + \frac{1}{\kappa}\right) \frac{\zeta(r^{\frac{1}{\kappa}})}{\pi^{2 + \frac{1}{\kappa}}} \partial_\mu u^\mu.
    \end{aligned}
\end{equation*}
Here, we introduce the source terms $(f, g, h)$ to make the equations inhomogeneous, as the proof remains the same whether or not these source terms are included. Since each member of the one-parameter family  $(r_\lambda, u_\lambda, \pi_\lambda)$  does not necessarily share the same domain—unlike when working in a Lagrangian frame—there are no boundary conditions imposed on the linearized variables  $(\tilde{r}, \tilde{u}, \tilde{\pi})$ .
\subsection{Function Spaces}We define the functional framework of weighted Sobolev spaces for our analysis here. This functional framework is dictated by the degenerate hyperbolic nature of the problem and corresponding estimates.
\begin{definition}
    The weighted $L^2$ space with weight $\omega\geq0$, denoted by $L^2(\omega)$, is the space of all distributions $f$ such that
    \begin{equation*}
        \norm{f}_{L^2(\omega)}^2 = \int \omega\abs{f}^2\odif{x}<\infty.
    \end{equation*}
\end{definition}
The weight $\omega$ will be played out by $r$ which is a solution to \eqref{The_nonlinear_system} that satisfies the physical vacuum condition \eqref{Physical_vacuum_boundary_condition}. With the above definition, the base space of the solutions $(\tilde{r},\tilde{u},\tilde{\pi})$ to the linearized system \eqref{The_linearized_system} is 
\begin{equation*}
    \mathcal{H}:=\mathcal{H}(\Omega_t) = L^2(r^\frac{1-\kappa}{\kappa}(t,\bullet))\times L^2(r^\frac{1}{\kappa}(t,\bullet))\times L^2(r^\frac{1}{\kappa}(t,\bullet)).
\end{equation*}
The energy admitted by the linearized system is the following,
\begin{equation}\label{BasicEnergy}
    \begin{aligned}
        E(t) &= \int_{\Omega_t} r^{\frac{1}{\kappa}-1}\left[\tilde{r}^2 + \frac{\kappa^2}{\kappa+1}ra_1^2G^{\alpha\beta}\tilde{u}_\alpha\tilde{u}_\beta+\frac{(2\kappa+1)\kappa}{\kappa+1}r\frac{r\pi^{1+\frac{1}{\kappa}}a_0^{1+\frac{1}{\kappa}}a_1}{\zeta(r^\frac{1}{\kappa})}\right] dx\\
        &=\frac{1}{2} \int_{\Omega_t} r^{\frac{1}{\kappa}-1}\left[\tilde{r}^2+\frac{\kappa^2}{\kappa+1} r a_1^2 \tilde{u}_\alpha \tilde{u}^\alpha+\frac{(2\kappa+1)\kappa}{\kappa+1}r\frac{r\pi^{1+\frac{1}{\kappa}}a_0^{1+\frac{1}{\kappa}}a_1}{\zeta(r^\frac{1}{\kappa})}\right] dx.
    \end{aligned}
\end{equation}
It can be shown the energy $E(t)$ can serve as an equivalent norm of the base space $\mathcal{H}$. (See \lref{CoefficientLemma})
Now, we define high-order weighted Sobolev spaces.
\begin{definition}
    We define $H^{j,\sigma}:=H^{j,\sigma}(\Omega_t)$, where $j\geq 0$ is an integer and $\sigma >-\frac{1}{2}$, to be the space of all distributions in $\Omega_t$ such that
    \begin{equation*}
    \norm{f}_{H^{j,\sigma}}^2:=\sum_{\abs{\alpha}\leq j}\norm{r^\sigma\p^\alpha f}_{L^2(\Omega_t)}<\infty.
\end{equation*}
\end{definition}
The weighted spaces $H^{j,\sigma}$ satisfy the following hardy-type embedding. We refer readers to \cite{ifrim2023compressible} for the proofs.
\begin{lemma}\label{WeightedEmbedding}
    Assume that $j_1>j_2 \geq 0$ and $\sigma_1>\sigma_2>-\frac{1}{2}$ with $j_1-j_2=\sigma_1-\sigma_2$. Then we have
    \begin{equation*}
        H^{j_1, \sigma_1} \subset H^{j_2, \sigma_2}.
    \end{equation*}
\end{lemma}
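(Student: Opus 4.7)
The plan is to follow a standard two-step reduction for weighted Sobolev embeddings: first induct on the integer $k := j_1 - j_2 = \sigma_1 - \sigma_2$ to reduce to the base case $j_1 = j_2 + 1$, $\sigma_1 = \sigma_2 + 1$; then localize to a tubular neighborhood of $\Gamma_t$ and apply a one-dimensional Hardy inequality in the normal direction. Since $r$ satisfies the physical vacuum condition \eqref{Physical_vacuum_boundary_condition}, it is comparable to the distance function $d(\cdot,\Gamma_t)$ on such a neighborhood, with $|\nabla r|$ bounded below on $\Gamma_t$; away from the boundary $r$ is bounded below by a positive constant, so the embedding reduces there to the standard unweighted Sobolev containment $H^{j_1} \subset H^{j_2}$.

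For the reduction step, I would observe that for $f \in H^{j+1,\sigma+1}$ and any multi-index $\alpha$ with $|\alpha| \le j$, bounding $\|r^\sigma \partial^\alpha f\|_{L^2}$ reduces, upon setting $g = \partial^\alpha f$, to proving the global Hardy-type estimate
\[
  \|r^\sigma g\|_{L^2(\Omega_t)} \lesssim \|r^{\sigma+1} g\|_{L^2(\Omega_t)} + \|r^{\sigma+1} \nabla g\|_{L^2(\Omega_t)}
\]
for smooth $g$ on $\overline{\Omega_t}$ and any $\sigma > -1/2$. Iterating this inequality $k$ times recovers the full statement.

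The core ingredient is a one-dimensional Hardy inequality. I would fix a tubular neighborhood $\mathcal{U}$ of $\Gamma_t$ and local coordinates $(s, y')$ with $s \in [0,\delta]$ transverse to $\Gamma_t$ and $s \simeq r$, so that the Euclidean volume form reads $J(s,y')\, ds\, dS(y')$ for some smooth $J$ bounded above and below. For $\sigma > -1/2$ and $h \in C^1([0,\delta])$, integration by parts via $(s^{2\sigma+1})' = (2\sigma+1) s^{2\sigma}$, followed by Young's inequality to absorb the resulting $\int s^{2\sigma} h^2$, yields
\[
  \int_0^\delta s^{2\sigma} h(s)^2 \, ds \lesssim h(\delta)^2 + \int_0^\delta s^{2\sigma+2} \bigl(h^2 + (h')^2\bigr) ds,
\]
where the inner boundary term at $s = 0$ vanishes precisely because $2\sigma + 1 > 0$. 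Applying this pointwise in $y'$, integrating over $\Gamma_t$, and dominating the endpoint term $h(\delta)^2$ via a standard interior Sobolev estimate on the region where $r$ is comparable to $1$ then produces the global weighted Hardy bound, and hence the desired embedding.

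The main obstacle is the geometric setup near $\Gamma_t$: one must ensure that tubular coordinates exist with $r$ uniformly equivalent to the normal variable $s$ throughout the relevant time interval. This is where the physical vacuum assumption for $r$ intervenes, furnishing both $r \simeq d(\cdot,\Gamma_t)$ and the non-degeneracy of $\nabla r$ on $\Gamma_t$ needed to straighten the boundary. Once this geometric input is in place, the analytic content is essentially rigid: the threshold $\sigma > -1/2$ appears exactly to guarantee $(s^{2\sigma+1})' > 0$ and the vanishing of the boundary term at $s = 0$ in the integration by parts, and the induction preserves this threshold at every step since $\sigma_2 > -1/2$.
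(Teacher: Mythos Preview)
The paper does not actually prove this lemma; it simply states ``We refer readers to \cite{ifrim2023compressible} for the proofs.'' Your proposal supplies the standard argument---reduction by induction on $j_1-j_2$ to a one-step Hardy inequality, then a tubular-neighborhood straightening and the one-dimensional integration-by-parts trick with $(s^{2\sigma+1})'=(2\sigma+1)s^{2\sigma}$---which is correct and is essentially the proof one finds in the cited reference. There is nothing to compare against in the present paper itself; your write-up is more detailed than what appears here.
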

As a corollary of the above lemma we have embeddings of the weighted Sobolev spaces into standard Sobolev spaces:
\begin{corollary}
    Assume that $\sigma>0$ and $\sigma \leq j$. Then we have
    \begin{equation*}
        H^{j, \sigma} \subset H^{j-\sigma}.
    \end{equation*}
\end{corollary}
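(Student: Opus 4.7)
The plan is to deduce this embedding from \lref{WeightedEmbedding} by a single choice of parameters that trades the full weight for derivatives. Since the lemma requires $j_1 - j_2 = \sigma_1 - \sigma_2$ while preserving $\sigma_2 > -\frac{1}{2}$, the natural move is to set $\sigma_1 = \sigma$ and $\sigma_2 = 0$, which forces $j_2 = j - \sigma$. By the very definition of the weighted space, $H^{j_2,0}$ coincides with the ordinary Sobolev space $H^{j_2}$, so this single step converts $H^{j,\sigma}$ into the desired $H^{j-\sigma}$ provided the lemma's integer-count and weight-range hypotheses are met.

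First I would treat the case when $\sigma$ is a positive integer with $\sigma \leq j$. With $j_1 = j$, $j_2 = j - \sigma$ (a non-negative integer), $\sigma_1 = \sigma$, and $\sigma_2 = 0$, all three hypotheses of \lref{WeightedEmbedding}, namely $j_1 > j_2 \geq 0$, $\sigma_1 > \sigma_2 > -\frac{1}{2}$, and $j_1 - j_2 = \sigma_1 - \sigma_2$, follow immediately from the assumptions $0 < \sigma \leq j$ together with the matching choice of shifts. The lemma then gives $H^{j,\sigma} \subset H^{j-\sigma, 0} = H^{j-\sigma}$, which is the claim in this case.

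For the general case when $\sigma$ is not necessarily an integer, $H^{j-\sigma}$ should be interpreted as the fractional Sobolev space (for example, the Bessel potential space of order $j-\sigma$). I would write $\sigma = k + \theta$ with $k = \lfloor \sigma \rfloor$ and $\theta \in [0,1)$ and apply \lref{WeightedEmbedding} with $(j_2, \sigma_2) = (j-k, \theta)$ to obtain $H^{j,\sigma} \subset H^{j-k, \theta}$. It would then remain to identify the intermediate weighted space $H^{j-k, \theta}$ with the fractional Sobolev space $H^{j-k-\theta}$, which I would carry out by real interpolation (the $K$-method) between the two consecutive integer cases $H^{j-k}$ and $H^{j-k-1}$ obtained from the previous step.

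The main obstacle I anticipate is precisely this fractional step, since \lref{WeightedEmbedding} is formulated only for integer derivative counts and interpolating weighted $L^2$-spaces with the correct weight requires a small additional argument. In applications to the linearized system later in the paper, however, only integer values of $\sigma$ are likely to appear, so this subtlety should be benign in practice, and the proof then reduces to the one-line application of \lref{WeightedEmbedding} described above.
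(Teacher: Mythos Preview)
Your approach is correct and is precisely what the paper intends: the corollary is stated without proof, immediately after \lref{WeightedEmbedding}, as a direct consequence obtained by taking $\sigma_2 = 0$ so that $H^{j-\sigma,0} = H^{j-\sigma}$. Your discussion of the fractional case via interpolation goes beyond what the paper spells out (it simply defers all such details to \cite{ifrim2023compressible}), but the core one-line argument you give for integer $\sigma$ is exactly the paper's implicit reasoning.
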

Then, by the standard Sobolev embedding theorem, we have
\begin{corollary}
    \begin{equation*}
        H^{j, \sigma} \subset C^s, \quad 0 \leq s \leq j-\sigma-\frac{d}{2}.
    \end{equation*}
\end{corollary}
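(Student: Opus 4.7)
The plan is to obtain this embedding by composing two inclusions that are already in place in the excerpt. First, I would invoke the immediately preceding corollary, which under the hypotheses $\sigma > 0$ and $\sigma \leq j$ asserts $H^{j,\sigma}(\Omega_t) \subset H^{j-\sigma}(\Omega_t)$. The virtue of this step is that it transfers the analysis from the weighted framework, where the degeneracy of the weight $r$ near $\Gamma_t$ makes pointwise control delicate, to the standard (unweighted) Sobolev setting on the bounded domain $\Omega_t$, at the cost of $\sigma$ derivatives.

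Next, I would invoke the classical Sobolev embedding theorem on $\Omega_t \subset \R^d$: for a domain with sufficiently regular boundary and any real order $k > \tfrac{d}{2}$, one has $H^{k}(\Omega_t) \hookrightarrow C^{s}(\overline{\Omega_t})$ in the range $0 \leq s \leq k - \tfrac{d}{2}$, where the case of non-integer $k - \tfrac{d}{2}$ is handled by the usual Sobolev--Morrey inequality and the fractional-order case reduces to the integer one by real interpolation (or via the Bessel-potential definition of $H^{k}$). Taking $k = j - \sigma$ and chaining the inclusion with the one from the previous paragraph delivers
\begin{equation*}
    H^{j,\sigma}(\Omega_t) \;\subset\; H^{j-\sigma}(\Omega_t) \;\subset\; C^{s}(\overline{\Omega_t}), \qquad 0 \leq s \leq j - \sigma - \tfrac{d}{2},
\end{equation*}
which is precisely the stated conclusion.

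There is no genuine obstacle here beyond verifying hypotheses. One should check that the hypotheses of the previous corollary are met (so $\sigma > 0$ and $\sigma \leq j$, both implicit in the stated range), that $j - \sigma > \tfrac{d}{2}$ so the admissible range of $s$ is non-empty, and that $\Omega_t$ has enough boundary regularity (Lipschitz suffices) for the classical embedding to apply — this last point is supplied by the standing regularity assumptions on the background solution $(r,u,\pi)$ and hence on $\Gamma_t$. In the exposition I would keep this corollary very short, since all the substantive content lives in \lref{WeightedEmbedding} and in the standard Sobolev theorem, to which the reader is already referred.
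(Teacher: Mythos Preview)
Your proposal is correct and follows exactly the paper's approach: the paper introduces this corollary with the single phrase ``Then, by the standard Sobolev embedding theorem, we have'' and gives no further proof, so the intended argument is precisely the composition of the preceding corollary $H^{j,\sigma}\subset H^{j-\sigma}$ with the classical Sobolev embedding $H^{j-\sigma}\subset C^s$. Your additional remarks on hypotheses and boundary regularity are reasonable elaborations but go beyond what the paper itself records.
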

We define the high-order space $\mathcal{H}^{2l}$ for the regular solutions as
\begin{equation*}
    \mathcal{H}^{2l} = H^{2l,\frac{1}{2\kappa}+l-\frac{1}{2}}\times H^{2l,\frac{1}{2\kappa}+l} \times H^{2l,\frac{1}{2\kappa}+l}.
\end{equation*}
\subsection{Basic Energy Estimate} Here, we establish the basic energy estimate of the solutions $(\tilde{r},\tilde{u},\tilde{\pi})$ to the linearized equations \eqref{The_linearized_system} under suitable assumptions on the background solutions $(r,u,\pi)$. Throughout the proof, we will use the following differential formula for the moving domain,
\begin{equation}\label{MovingDomainFormula}
    \frac{\mathrm{d}}{\mathrm{d} t} \int_{\Omega_t} f \mathrm{d} x=\int_{\Omega_t} \frac{1}{u^0} \mathrm{D}_t f \mathrm{~d} x+\int_{\Omega_t} \partial_i\left(\frac{u^i}{u^0}\right) \mathrm{d} x .
\end{equation}
\begin{notation}
    We denote the material derivative parallel to the $4$-velocity by
    \begin{equation*}
        \Dt = u^\mu \p_\mu.
    \end{equation*}
\end{notation}
\begin{notation}\label{LagrangianCoordinate}
    We let $y=(y^1,y^2,y^3)$ be the coordinate on the initial surface $\Omega_0$ induced by the rectangular spacetime coordinate $\{x^\alpha\}_{\alpha = 0,1,2,3}$. We define the map $\eta: I\times\Omega_0 \to \R^{1+3}$ to be the flow map of the $4$-velocity, where $I$ is the interval of proper time containing $t=0$, i.e., $\eta=\eta(\tau,y)$ solve the following equation
    \begin{equation}\label{Flow_map}
        \left\{ \begin{aligned}
        & \p_\tau \eta = u\circ\eta
        \\
        & n(0,y) = (0,y)
    \end{aligned}\right..
    \end{equation}
    Similar as before, we identify the proper time $\tau$ with $y^0$.We use Capital Latin indices $K,L,M,...$, etc. ranging from $0$ to $3$ to denote the indices of Lagrangian spacetime coordinates. We use $\p_K$ to denote the derivative $\frac{\p}{\p y^K}$ with respect to the Lagrangian coordinate.
\end{notation}
We use the following lemma to control the coefficient $a_0 = \frac{\pi}{r}$ of the linearized equation \eqref{The_linearized_system}.
\begin{lemma}\label{CoefficientLemma}
    Let $(r,u,\pi)$ be the solutions to the nonlinear system \eqref{The_nonlinear_system} such that
    \begin{equation*}
        \sup_{t\in[0,T]}\norm{r}_{W^{2l+1,\infty}(\Omega_t)},\sup_{t\in[0,T]}\norm{\pi}_{W^{2l+1,\infty}(\Omega_t)},\sup_{t\in[0,T]}\norm{u}_{W^{2l+1,\infty}(\Omega_t)}\leq K,
    \end{equation*}
    where $K>0$ is a constant. Then, there exists $T^*=T^*\left(K,\norm{\frac{\pi_0}{r_0}}_{W^{2l,\infty}(\Omega_0)},\norm{\frac{r_0}{\pi_0}}_{W^{2l,\infty}(\Omega_0)}\right)>0$ such that
    \begin{equation*}
        \begin{aligned}
            \sup_{t\in[0,T^*]}\norm{\frac{\pi}{r}}_{W^{2l,\infty}(\Omega_t)}&\leq 2\norm{\frac{\pi_0}{r_0}}_{W^{2l,\infty}(\Omega_0)},\\
            \sup_{t\in[0,T^*]}\norm{\frac{r}{\pi}}_{W^{2l,\infty}(\Omega_t)}&\leq 2\norm{\frac{r_0}{\pi_0}}_{W^{2l,\infty}(\Omega_0)},\\
            \sup_{t\in[0,T^*]}\norm{\frac{\zeta(r^\frac{1}{\kappa})}{r^{2+\frac{1}{\kappa}}}}_{W^{2l,\infty}(\Omega_t)}&\leq 2\norm{\frac{\zeta(r_0^\frac{1}{\kappa})}{r_0^{2+\frac{1}{\kappa}}}}_{W^{2l,\infty}(\Omega_0)}.
        \end{aligned}
    \end{equation*}
\end{lemma}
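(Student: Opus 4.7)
The strategy is to derive closed transport equations along the flow for the three auxiliary quantities
\[
    w_1 := \pi/r, \qquad w_2 := r/\pi, \qquad w_3 := \zeta(r^{1/\kappa})/r^{2+1/\kappa},
\]
and to propagate their $W^{2l,\infty}$-norms by a Grönwall/bootstrap argument. From \eqref{rEquation} and \eqref{pEquation} one has
\[
    \Dt r = -\kappa r a_1 \p_\mu u^\mu, \qquad \Dt \pi = -\pi - \lambda(r^{1/\kappa}) \pi^{3+1/\kappa} - r a_4 \p_\mu u^\mu .
\]
A direct computation of $\Dt w_i$, combined with the identities $a_4 = w_3 w_2^{1+1/\kappa}$, $a_1 = r + 1 + r^2 w_1^{2+1/\kappa}$, and the fact that $\zeta(\varrho)/\varrho^{2\kappa+1}$ is smooth (so $\zeta'(r^{1/\kappa}) r^{1/\kappa}/r^{2+1/\kappa}$ reduces to a smooth function of $r^{1/\kappa}$), yields a system of the form
\[
    \Dt w_i = F_i(r,\pi,u,\p u, w_1,w_2,w_3), \qquad i=1,2,3,
\]
where each $F_i$ is a polynomial in $w_1, w_2, w_3$ whose coefficients depend smoothly on $r, \pi, u, \p u$ and, crucially, are free of negative powers of $r$ or $\pi$.

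Next, using the Lagrangian flow map $\eta$ from \nref{LagrangianCoordinate}, I set $\tilde w_i(\tau,y) := w_i(\tau,\eta(\tau,y))$, so that each $\tilde w_i$ satisfies the pointwise-in-$y$ ODE $\p_\tau \tilde w_i = F_i \circ \eta$. The assumption $\|u\|_{W^{2l+1,\infty}(\Omega_t)} \leq K$ ensures that $\eta(\tau,\cdot)$ remains close to the identity in $W^{2l+1,\infty}(\Omega_0)$ on a short time interval, so Eulerian and Lagrangian $W^{2l,\infty}$ norms are equivalent up to constants depending only on $K$. Applying $\p_y^\alpha$ with $|\alpha|\leq 2l$ to the ODE and using Moser-type product and composition estimates, together with the bounds $\|r\|_{W^{2l+1,\infty}},\,\|\pi\|_{W^{2l+1,\infty}} \leq K$, gives
\[
    \|\p_\tau \p_y^\alpha \tilde w_i\|_{L^\infty(\Omega_0)} \leq C(K)\, P\!\left(\|\tilde w_1\|_{W^{2l,\infty}},\|\tilde w_2\|_{W^{2l,\infty}},\|\tilde w_3\|_{W^{2l,\infty}}\right)
\]
for some polynomial $P$ whose degree is independent of $K$.

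Setting $M(\tau) := \sum_{i=1}^{3}\|\tilde w_i(\tau)\|_{W^{2l,\infty}(\Omega_0)}$ and integrating in $\tau$ produces the integral inequality $M(\tau) \leq M(0) + \int_0^\tau C(K) P(M(s))\,ds$. A standard continuity/bootstrap argument then yields $T^\ast = T^\ast(K, M(0)) > 0$ on which $M(\tau) \leq \tfrac{3}{2}M(0)$; translating back to Eulerian coordinates (with the small deformation of $\eta$ absorbed into the overall constant $2$) delivers the three claimed bounds individually. The main obstacle is the algebraic reduction carried out in the first paragraph: one must verify that every factor of $1/r$ or $1/\pi$ that arises when differentiating the ratios $\pi/r$, $r/\pi$, and $\zeta(r^{1/\kappa})/r^{2+1/\kappa}$ actually cancels or is reabsorbed into $w_1, w_2, w_3$, yielding the right-hand sides $F_i$ with no singular factors. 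Once this is done, the remaining ODE-in-Lagrangian propagation of $W^{2l,\infty}$ estimates is standard.
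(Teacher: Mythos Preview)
Your proposal is correct and follows essentially the same route as the paper: compute the material derivatives of $\pi/r$, $r/\pi$, and $\zeta(r^{1/\kappa})/r^{2+\frac{1}{\kappa}}$, check that the right-hand sides contain no residual negative powers of $r$ or $\pi$ (the paper writes these evolution equations out explicitly), integrate along the flow map, and close by a bootstrap/continuity argument. The only cosmetic difference is that the paper runs an induction on $l$ starting from the $L^\infty$ case, whereas you propagate the full $W^{2l,\infty}$ norm in one shot via Moser-type product/composition estimates; both executions are standard once the nonsingular form of the $F_i$ is established.
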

\begin{proof}
    We prove the lemma by induction and continuity argument. We first prove the base case $l=0$. Assume 
    \begin{equation*}
        \begin{aligned}
            \sup_{t\in[0,T^*]}\norm{\frac{\pi}{r}}_{L^\infty(\Omega_t)}&\leq 2\norm{\frac{\pi_0}{r_0}}_{L^\infty(\Omega_0)},\\
            \sup_{t\in[0,T^*]}\norm{\frac{r}{\pi}}_{L^\infty(\Omega_t)}&\leq 2\norm{\frac{r_0}{\pi_0}}_{L^\infty(\Omega_0)},\\
            \sup_{t\in[0,T^*]}\norm{\frac{\zeta(r^\frac{1}{\kappa})}{r^{2+\frac{1}{\kappa}}}}_{L^\infty(\Omega_t)}&\leq 2\norm{\frac{\zeta(r_0^\frac{1}{\kappa})}{r_0^{2+\frac{1}{\kappa}}}}_{L^\infty(\Omega_0)},
        \end{aligned}
    \end{equation*}
    for $T^*>0$. We compute the evolution for $a_0=\frac{\pi}{r}$, $\frac{1}{a_0}=\frac{r}{\pi}$ and $\frac{\zeta(r^\frac{1}{\kappa})}{r^{2+\frac{1}{\kappa}}}$ and obtain the following
    \begin{equation}\label{EvolutionOfCoefficients}
        \begin{aligned}
            D_t(a_0) &= \left[\kappa(r+1)-1\right]a_0 + \left(\kappa r^2-\lambda(r^\frac{1}{\kappa})r^{2+\frac{1}{\kappa}}\right)a_0^{3+\frac{1}{\kappa}}-\frac{\zeta(r^\frac{1}{\kappa})}{r\pi^{1+\frac{1}{\kappa}}}\p_\mu u^\mu,\\
            D_t\left(\frac{1}{a_0}\right)&= \left[1-\kappa(r+1)\right]\frac{1}{a_0}+\left(\lambda(r^\frac{1}{\kappa})r^{2+\frac{1}{\kappa}}-\kappa r^2\right)a_0^{1+\frac{1}{\kappa}}+\frac{\zeta(r^\frac{1}{\kappa})r}{\pi^{3+\frac{1}{\kappa}}}\p_\mu u^\mu,\\
            D_t\left(\frac{\zeta(r^\frac{1}{\kappa})}{r^{2+\frac{1}{\kappa}}}\right) &= \p_\mu u^\mu (r+1+r^2a_0^{2+\frac{1}{\kappa}})\left(\frac{(2\kappa+1)\zeta(r^\frac{1}{\kappa})}{r^{2+\frac{1}{\kappa}}}-\frac{\zeta'(r^\frac{1}{\kappa})}{r^2}\right).
        \end{aligned}
    \end{equation}
    We integrate above along the flow line and by a direct estimate, we have for any $(\tau,y)\in\eta^{-1}(\mathcal{D}_{T^*})$,
    \begin{equation*}
        \begin{aligned}
            \abs{a_0\circ\eta(\tau,y)} &\leq \abs{\frac{\pi_0}{r_0}(y)} + T^*\scrC\left(K,\norm{\frac{\pi_0}{r_0}}_{L^\infty(\Omega_0)},\norm{\frac{r_0}{\pi_0}}_{L^\infty(\Omega_0)}\right),\\
            \abs{\left(\frac{1}{a_0}\right)\circ\eta(\tau,y)} &\leq \abs{\frac{r_0}{\pi_0}(y)} + T^*\scrC\left(K,\norm{\frac{\pi_0}{r_0}}_{L^\infty(\Omega_0)},\norm{\frac{r_0}{\pi_0}}_{L^\infty(\Omega_0)}\right),\\
            \abs{\left(\frac{\zeta(r^\frac{1}{\kappa})}{r^{2+\frac{1}{\kappa}}}\right)\circ\eta(\tau,y)}&\leq \abs{\frac{\zeta(r_0^\frac{1}{\kappa})}{r_0^{2+\frac{1}{\kappa}}}(y)}+T^*\scrC\left(K,\norm{\frac{\pi_0}{r_0}}_{L^\infty(\Omega_0)},\norm{\frac{r_0}{\pi_0}}_{L^\infty(\Omega_0)}\right).
        \end{aligned}
    \end{equation*}
    We take $T^*>0$ such that
    \begin{equation*}
        T^*\scrC\left(K,\norm{\frac{\pi_0}{r_0}}_{L^\infty(\Omega_0)},\norm{\frac{r_0}{\pi_0}}_{L^\infty(\Omega_0)}\right)<\frac{1}{2}\min{\left\{\norm{\frac{\pi_0}{r_0}}_{L^\infty(\Omega_0)},\norm{\frac{r_0}{\pi_0}}_{L^\infty(\Omega_0)},\norm{\frac{\zeta(r_0^\frac{1}{\kappa})}{r_0^{2+\frac{1}{\kappa}}}}_{L^\infty(\Omega_0)}\right\}}
    \end{equation*}
    to close the bootstrap. The high order case can be proven by differentiating the evolution \eqref{EvolutionOfCoefficients}, induction and a similar continuity argument.
\end{proof}
We also need the following Gronwall-type inequality for the energy estimate.
\begin{lemma}
    Let $d(t)$ be a non-negative function that satisfies the following integral inequality,
    \begin{equation}\label{PreGronwall}
        d(t)\leq c(t) + \int_0^t a(s)d(s) + b(s)d^\alpha(s)\,ds,
    \end{equation}
    where $a,b,c$ are continuous non-negative functions for $t\geq 0$ and $0<\alpha<1$. Then,
    \begin{multline}\label{GronwallTypeInequality}
        d(t)\leq \biggl(c^{1-\alpha}(t)\exp\left((1-\alpha)\int_0^t a(s)\, ds\right) \\
        + (1-\alpha)\int_0^t b(s) \exp\left((1-\alpha)\int_s^t a(r)\, dr\right)\,ds\biggr)^\frac{1}{1-\alpha}
    \end{multline}
\end{lemma}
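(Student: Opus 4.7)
The plan is to recast the nonlinear integral inequality as a linear differential inequality through a Bihari-type substitution and then integrate explicitly. First, I set
\[
\phi(t) := c(t) + \int_0^t [a(s)d(s) + b(s)d^\alpha(s)]\,ds,
\]
so that $d(t) \leq \phi(t)$, $\phi(0) = c(0)$, and nonnegativity of $a,b$ combined with $d \leq \phi$ and monotonicity of $x \mapsto x^\alpha$ on $[0,\infty)$ yields the pointwise bound
\[
\phi'(t) \leq c'(t) + a(t)\phi(t) + b(t)\phi^\alpha(t).
\]
For this step to make sense at points where $c$ may vanish, I first reduce to the case where $c$ is $C^1$, strictly positive, and non-decreasing by replacing $c(t)$ with $\sup_{s\leq t}c(s) + \varepsilon$, followed by a standard mollification, and then sending $\varepsilon \downarrow 0$ at the end using continuity of the right-hand side of \eqref{GronwallTypeInequality} in $c$.

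Next I set $\psi := \phi^{1-\alpha}$, which is differentiable since $\phi > 0$. Because $a,b,d \geq 0$ one has $\phi \geq c$, and since $\alpha > 0$ this implies $\phi^{-\alpha} \leq c^{-\alpha}$. Using this to bound $\psi' = (1-\alpha)\phi^{-\alpha}\phi'$ yields the \emph{linear} differential inequality
\[
\psi'(t) - (1-\alpha)a(t)\psi(t) \leq (c^{1-\alpha})'(t) + (1-\alpha)b(t).
\]
Multiplying by the integrating factor $\mu(t) := \exp\!\bigl(-(1-\alpha)\int_0^t a(r)\,dr\bigr)$, integrating from $0$ to $t$, and integrating by parts on the $\mu(c^{1-\alpha})'$ term (so as to exploit the identity $\mu'(s) = -(1-\alpha)a(s)\mu(s)$), the boundary contributions from $\psi(0) = c^{1-\alpha}(0)$ cancel and one is left with
\[
\mu(t)\psi(t) \leq \mu(t)c^{1-\alpha}(t) + (1-\alpha)\int_0^t a(s)\mu(s)c^{1-\alpha}(s)\,ds + (1-\alpha)\int_0^t \mu(s)b(s)\,ds.
\]

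The final step is to absorb the leftover $a\mu c^{1-\alpha}$ integral using monotonicity of $c$: since $c^{1-\alpha}(s) \leq c^{1-\alpha}(t)$ for $s\leq t$ and $(1-\alpha)\int_0^t a(s)\mu(s)\,ds = -\int_0^t \mu'(s)\,ds = 1 - \mu(t)$, one obtains the telescoping bound
\[
\mu(t)c^{1-\alpha}(t) + (1-\alpha)\int_0^t a(s)\mu(s)c^{1-\alpha}(s)\,ds \leq c^{1-\alpha}(t).
\]
Dividing through by $\mu(t)$ gives $\psi(t) \leq c^{1-\alpha}(t)\exp((1-\alpha)\int_0^t a) + (1-\alpha)\int_0^t b(s)\exp((1-\alpha)\int_s^t a)\,ds$, and raising both sides to the $\frac{1}{1-\alpha}$ power (valid since $\alpha<1$) yields \eqref{GronwallTypeInequality}, because $d \leq \phi = \psi^{1/(1-\alpha)}$.

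The main technical obstacle is the spurious term $c^{-\alpha}c'$ created by the substitution $\psi = \phi^{1-\alpha}$: it is what prevents a direct application of the standard linear Gronwall lemma and is what forces the reduction to non-decreasing $c$. Relatedly, handling the degenerate case $c(0) = 0$ cleanly requires the $\varepsilon$-regularization, since $c^{-\alpha}$ blows up at zeros of $c$; the continuity of the explicit right-hand side of \eqref{GronwallTypeInequality} in $c$ lets one pass this parameter to zero without difficulty.
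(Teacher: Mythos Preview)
Your argument is correct and reaches the same inequality, but it takes a more circuitous path than the paper's. The paper's key simplification is to \emph{freeze} $c$ at the endpoint: for fixed $\tau>0$ it sets $c_\tau=c(\tau)$ and defines $\phi(t)=c_\tau+\int_0^t[a\,d+b\,d^\alpha]\,ds$ on $[0,\tau]$. Since $c_\tau$ is constant, $\phi'=a\,d+b\,d^\alpha\le a\phi+b\phi^\alpha$ has \emph{no} $c'$-term, so the Bernoulli substitution $\phi^{1-\alpha}$ yields a linear ODE inequality that integrates in one line; evaluating at $t=\tau$ and letting $\tau$ vary gives the result. Your approach instead keeps $c(t)$ inside $\phi$, which generates the extra term $(1-\alpha)\phi^{-\alpha}c'$; you then have to bound this by $(c^{1-\alpha})'$, integrate by parts, and invoke monotonicity of $c$ a second time to telescope the leftover $\int a\mu c^{1-\alpha}$ into $c^{1-\alpha}(t)(1-\mu(t))$. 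All of this is avoided by the freezing trick. Note also that both arguments tacitly require $c$ non-decreasing (the paper needs it for $d(t)\le\phi(t)$ on $[0,\tau]$; you need it for $c'\ge0$ and for the telescoping step), and your $\sup_{s\le t}c(s)+\varepsilon$ regularization, after passing to the limit, really yields the bound with $\sup_{s\le t}c(s)$ in place of $c(t)$---which agrees with the stated inequality precisely when $c$ is non-decreasing, as it is in the paper's application where $c$ is constant.
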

\begin{proof}
    Given $\tau>0$, let $c_\tau = c(\tau)$. For $t\in [0,\tau]$, we define
    \begin{equation*}
        \phi(t) = c_\tau + \int_0^t a(s)d(s) + b(s)d^\alpha(s)\,ds.
    \end{equation*}
    From \eqref{PreGronwall}, we have $d(t)\leq \phi$ for $t\in[0,\tau]$. We differentiate $\phi$,
    \begin{equation*}
        \begin{aligned}
            \phi'(t) &= a(t)d(t) + b(t)d^\alpha(t)\\
            &\leq a(t)\phi(t) + b(t)\phi^\alpha(t).
        \end{aligned}
    \end{equation*}
    Dividing both sides by $\phi^\alpha$, this gives us a linear differential inequality in $\phi^{1-\alpha}$,
    \begin{equation*}
        \left(\phi^{1-\alpha}(t)\right)'\leq (1-\alpha)a(t)\phi^{1-\alpha}(t) + (1-\alpha)b(t).
    \end{equation*}
    By an elementary method of integrating factor,
    \begin{equation*}
        (\psi(t)\phi^{1-\alpha}(t))' \leq (1-\alpha)b(t)\psi(t),
    \end{equation*}
    where
    \begin{equation*}
        \psi(t) = \exp\left(\int_0^t(\alpha-1)a(s)\,ds\right).
    \end{equation*}
    Taking integral with respect to $t$, and considering that $\tau$ is arbitrary, we have $\eqref{GronwallTypeInequality}$.
\end{proof}
Now, we prove the basic estimate for the linearized equation.
\begin{remark}\label{Smallness}
    Due to the finite speed of propagation, we can divide the domain into two regions: the bulk region away from the free boundary and the region close to the free boundary. In the bulk region, we can apply the classical theory of hyperbolic equations. Our primary focus is on the degenerate quasilinear hyperbolic system near the free boundary, where  $r \ll 1$  is small. In this case, we interpret each integral over  $\Omega_t$  as being restricted to the region near the free boundary  $\Gamma_t$ . By using a partition of unity argument, we can extend the solution to the entire domain  $\Omega_t$ .
\end{remark}
\begin{theorem}\label{BasicEnergyEstimateTheorem}
    Let $(r,u,\pi)$ be sufficiently smooth solutions to \eqref{The_nonlinear_system} satisfying the physical vacuum boundary condition \eqref{Physical_vacuum_boundary_condition} such that
    \begin{equation}\label{BackgroundSolutionBounds}
        \sup _{t \in[0, T]}\|(r,u,\pi)\|_{W^{1, \infty}\left(\Omega_t\right)} \leq K,
    \end{equation}
    where $K>0$ is a constant. Then, there exists $T=T(K)>0$ such that if  the functions $(\tilde{r}, \tilde{u}, \tilde{\pi})$ are sufficiently smooth solutions to the linearized Cauchy problem \eqref{The_linearized_system} with $(f,g,h)\in L^1([0,T],\mathcal{H})$, then the solutions obey the following estimate,
    \begin{equation}\label{BasicEnergyEstimate}
        \norm{(\tilde{r},\tilde{u},\tilde{\pi})}_{\mathcal{H}}\leq \mathscr{C}(K)\left(\norm{(\tdr(0),\tdu(0),\tdp(0))}_{\mathcal{H}}+\int_0^t\norm{(f,g,h)}_\mathcal{H}\,\mathrm{d}s\right),
    \end{equation}
    for any $t\in[0,T]$, where $\mathscr{C}(K)>0$ is a constant continuously dependent on and increasing with respect to $K$.
\end{theorem}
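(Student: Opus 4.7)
My plan is to derive a differential inequality for the weighted energy $E(t)$ in \eqref{BasicEnergy} and close it with the generalized Gronwall inequality \eqref{GronwallTypeInequality} at exponent $\alpha=\tfrac12$. The starting point is the moving-domain formula \eqref{MovingDomainFormula}, which gives $\frac{dE}{dt} = \int_{\Omega_t} \frac{1}{u^0} D_t\mathcal{E} + \mathcal{E}\,\partial_i(u^i/u^0)\,dx$, where $\mathcal{E}$ is the integrand of $E(t)$. The geometric piece is $\mathcal{O}(\mathscr{C}(K) E(t))$ by \eqref{BackgroundSolutionBounds} and \lref{CoefficientLemma}. When $D_t$ falls on the weights in $\mathcal{E}$---e.g.\ $D_t r^{\frac{1}{\kappa}-1}$, $D_t(r a_1^2)$, $D_t\!\left(r\pi^{1+\frac{1}{\kappa}}a_0^{1+\frac{1}{\kappa}}a_1/\zeta(r^{1/\kappa})\right)$---I substitute \eqref{rEquation}--\eqref{pEquation} and use \lref{CoefficientLemma} to see that these contributions are again bounded by $\mathscr{C}(K) E(t)$.

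The central step is the principal-order cancellation produced by plugging \eqref{trEquation}--\eqref{tpEquation} into $\tilde r D_t\tilde r$, $\tilde u_\alpha D_t\tilde u^\alpha$, $\tilde\pi D_t\tilde\pi$, and retaining only those terms involving derivatives of $(\tilde r,\tilde u,\tilde\pi)$. The coefficients $\frac{\kappa^2}{\kappa+1}$ and $\frac{(2\kappa+1)\kappa}{\kappa+1}$ appearing in \eqref{BasicEnergy} are chosen precisely so the cross-pairs align: using $a_1 a_2 = 1$, $a_3 = a_0^{1+\frac{1}{\kappa}}a_2$, $a_4 = \zeta(r^{1/\kappa})/(r\pi^{1+\frac{1}{\kappa}})$, and the orthogonality consequence $\Delta^{\alpha\mu}\tilde u_\alpha = \tilde u^\mu$ from \eqref{EulerianOrthogonality}, the $(\tilde r,\tilde u)$ principal terms collapse into $-\kappa\, r^{1/\kappa} a_1\,\partial_\mu(\tilde r \tilde u^\mu)$, and the $(\tilde\pi,\tilde u)$ principal terms into $-\frac{\kappa(2\kappa+1)}{\kappa+1}\,r^{1/\kappa+1} a_1 a_0^{1+\frac{1}{\kappa}}\,\partial_\mu(\tilde\pi \tilde u^\mu)$. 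I would then convert these spacetime divergences via integration by parts, most cleanly by passing to the Lagrangian frame of \nref{LagrangianCoordinate} so that $\partial_0$ becomes a pure time-derivative along the flow and no spurious $\partial_t$ of $(\tilde r,\tilde u,\tilde\pi)$ survives. The boundary fluxes on $\Gamma_t$ vanish because the weights $r^{1/\kappa}$ and $r^{1/\kappa+1}$ are zero on $\Gamma_t$ by the physical vacuum condition \eqref{Physical_vacuum_boundary_condition}, and the bulk remainders involve only derivatives of the weights and are therefore again $\mathcal{O}(\mathscr{C}(K) E(t))$.

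The subprincipal coefficients $V_i, W_i, Z_i$, together with the couplings $\partial_\mu r\,\tilde u^\mu$, $r^2 Z_1\tilde\pi$, etc., are polynomial in $r, a_0, a_0^{-1}$, $\zeta(r^{1/\kappa})/r^{2+\frac{1}{\kappa}}$, and first derivatives of the background $(r,u,\pi)$, all bounded on $[0,T]$ for $T=T(K)$ small enough, by \eqref{BackgroundSolutionBounds} and \lref{CoefficientLemma}. Estimating them by Cauchy--Schwarz against the weighted densities defining $\mathcal{H}$ produces an additional $\mathscr{C}(K) E(t)$ contribution. The source terms $(f,g,h)$ are likewise handled by Cauchy--Schwarz in the weighted inner products, contributing $\mathscr{C}(K)E(t)^{1/2}\|(f,g,h)\|_{\mathcal{H}}$. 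Collecting everything yields
\begin{equation*}
\frac{dE}{dt} \leq \mathscr{C}(K)\,E(t) + \mathscr{C}(K)\,E(t)^{1/2}\,\|(f,g,h)\|_{\mathcal{H}},
\end{equation*}
and then \eqref{GronwallTypeInequality} with $\alpha=\tfrac12$, together with the norm equivalence $E\simeq \|\cdot\|_{\mathcal{H}}^2$ from \lref{CoefficientLemma}, delivers \eqref{BasicEnergyEstimate}.

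The main obstacle is the principal-order cancellation in the second paragraph: its success depends on a delicate algebraic match between the precise coefficients placed in $E(t)$ and the structure of the linearized equations---in particular on the identities relating $a_1,a_2,a_3,a_4$ and on $\Delta^{\alpha\mu}\tilde u_\alpha = \tilde u^\mu$. A subsidiary subtlety is that the divergences $\partial_\mu(\tilde r \tilde u^\mu)$ and $\partial_\mu(\tilde\pi\tilde u^\mu)$ include a $\partial_0$ component, which must be processed in the Lagrangian frame so as not to generate stray time-derivatives of the linearized variables that have no natural home in the energy inequality.
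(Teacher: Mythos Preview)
Your overall architecture matches the paper's: differentiate $E(t)$ via \eqref{MovingDomainFormula}, substitute \eqref{trEquation}--\eqref{tpEquation}, and exploit the coefficient choices in \eqref{BasicEnergy} to pair the principal cross terms. But two of your claims are wrong as stated, and the paper's proof shows how to fix them.

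First, the coupling term you list among the ``subprincipal'' pieces is not controllable by Cauchy--Schwarz in $\mathcal H$. After weighting, it reads $\int_{\Omega_t} r^{\frac{1}{\kappa}-1}\partial_\mu r\,\tilde u^\mu\tilde r\,dx$, and splitting as $r^{\frac{1}{2\kappa}-\frac12}|\tilde r|\cdot r^{\frac{1}{2\kappa}-\frac12}|\tilde u|$ asks for $\tilde u$ in $L^2(r^{\frac{1}{\kappa}-1})$, which is strictly stronger than the $L^2(r^{1/\kappa})$ you have. Likewise, if you integrate your divergence $r^{1/\kappa}a_1\,\partial_\mu(\tilde r\tilde u^\mu)$ by parts, the ``bulk remainder'' from the weight is $\tfrac{1}{\kappa}r^{\frac1\kappa-1}a_1\partial_\mu r\,\tilde r\tilde u^\mu$ --- the very same uncontrollable weight. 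These two uncontrollable pieces must be \emph{combined}, and their difference is $O(r^{1/\kappa})$ because $a_1-1=O(r)$. Equivalently (and this is what the paper does in \eqref{Blue}), pull the weight \emph{inside} the divergence first:
\[
\kappa r^{1/\kappa}a_1\partial_\mu\tilde u^\mu\,\tilde r + r^{\frac1\kappa-1}\partial_\mu r\,\tilde u^\mu\tilde r + \kappa r^{1/\kappa}a_1\partial_\mu\tilde r\,\tilde u^\mu \;=\; \kappa\,\partial_\mu\bigl(r^{1/\kappa}a_1\tilde r\tilde u^\mu\bigr)\;+\;O\bigl(r^{1/\kappa}\bigr)\tilde r\tilde u^\mu,
\]
and only then is the remainder genuinely lower order.

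Second, your treatment of the $\partial_0$ component is too vague to yield the differential inequality you claim. Passing to the Lagrangian frame replaces $\partial_0$ by a combination of $D_t$ and spatial derivatives; the $D_t(\tilde r\tilde u^0)$ piece is still a time derivative and does not fit into $\frac{dE}{dt}\le \mathscr C(K)E+\mathscr C(K)E^{1/2}\|(f,g,h)\|_{\mathcal H}$ without either modifying the energy by a cross term or integrating in time. The paper does the latter: it first integrates $\frac{dE}{dt}$ over $[0,t]$, then integrates the total divergence $\partial_\mu(r^{1/\kappa}a_1\tilde r\tilde u^\mu)$ (and its $\tilde\pi\tilde u$ analogue) over the spacetime slab. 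The lateral flux on $\bigcup_s\Gamma_s$ vanishes, and the time-slice terms at $s=0,t$ carry an extra factor $r^{1/2}$ relative to the energy density, so by Remark~\ref{Smallness} they are bounded by $\epsilon(E(t)+E(0))$ and absorbed into the left side. The resulting inequality is integral, not differential, and \eqref{GronwallTypeInequality} with $\alpha=\tfrac12$ closes it.
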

\begin{proof}
    We multiply the equation \eqref{trEquation} by $r^{\frac{1}{\kappa}-1}$, the equation \eqref{tuEquation} by $\frac{\kappa^2}{\kappa+1} r^{\frac{1}{\kappa}} a_1^2 G_\alpha ^\beta$ and the equation \eqref{tpEquation} by $\frac{(2\kappa+1)\kappa}{\kappa+1}r^\frac{1}{\kappa}\frac{r\pi^{1+\frac{1}{\kappa}}a_0^{1+\frac{1}{\kappa}}a_1}{\zeta(r^\frac{1}{\kappa})}$, we have the system 
\begin{equation}\label{Multiplelinearized}
    \left\{
    \begin{aligned}
        & r^{\frac{1}{\kappa}-1} u^\mu \partial_\mu \tilde{r} + \kappa r^{\frac{1}{\kappa}} a_1 \partial_\mu \tilde{u}^\mu 
        + r^{\frac{1}{\kappa}-1} \partial_\mu r \tilde{u}^\mu + r^{\frac{1}{\kappa}-1} V_1 \tilde{r} 
        + r^{\frac{1}{\kappa}+1} Z_1 \tilde{\pi} = r^{\frac{1}{\kappa}-1} f, \\
        & \begin{multlined}[t]
            \frac{\kappa^2}{\kappa+1} r^{\frac{1}{\kappa}} a_1^2 G_\alpha ^\beta u^\mu \partial_\mu \tilde{u}^\alpha 
            + \kappa r^{\frac{1}{\kappa}} a_1 \Delta^{\beta \mu} \partial_\mu \tilde{r} 
            + \frac{(2\kappa+1)\kappa}{\kappa+1} r^{\frac{1}{\kappa}+1} a_0^{1+\frac{1}{\kappa}} a_1 \Delta^{\beta \mu} \partial_\mu \tilde{\pi} \\ 
            + \frac{\kappa^2}{\kappa + 1} r^{\frac{1}{\kappa}} a_1^2 (V_2)^\beta \tilde{r} 
            + \frac{\kappa^2}{\kappa+1} r^{\frac{1}{\kappa}} a_1^2 G^\beta_\alpha (W_2)^\alpha_\gamma \tilde{u}^\gamma 
            \\
            + \frac{\kappa^2}{\kappa+1} r^{\frac{1}{\kappa}} a_1^2 (Z_2)^\beta \tilde{\pi} = \frac{\kappa^2}{\kappa+1} r^{\frac{1}{\kappa}} a_1^2 G_\alpha ^\beta g^\alpha,
        \end{multlined} \\
        & \begin{multlined}[t]
            \frac{(2\kappa+1)\kappa}{\kappa+1} r^{\frac{1}{\kappa}} \frac{r \pi^{1+\frac{1}{\kappa}} a_0^{1+\frac{1}{\kappa}} a_1}{\zeta(r^{\frac{1}{\kappa}})} u^\mu \partial_\mu \tilde{\pi} 
            + \frac{(2\kappa + 1) \kappa}{\kappa +1} r^{\frac{1}{\kappa}+1} a_0^{1+\frac{1}{\kappa}} a_1 \partial_\mu \tilde{u}^\mu \\
            + \frac{(2\kappa+1)\kappa}{\kappa+1} r^{\frac{1}{\kappa}} \frac{r \pi^{1+\frac{1}{\kappa}} a_0^{1+\frac{1}{\kappa}} a_1}{\zeta(r^{\frac{1}{\kappa}})} V_3 \tilde{r} \\
            + \frac{(2\kappa+1)\kappa}{\kappa+1} r^{\frac{1}{\kappa}} \frac{r \pi^{1+\frac{1}{\kappa}} a_0^{1+\frac{1}{\kappa}} a_1}{\zeta(r^{\frac{1}{\kappa}})} (W_3)_\alpha \tilde{u}^\alpha \\
            + \frac{(2\kappa+1)\kappa}{\kappa+1} r^{\frac{1}{\kappa}} \frac{r \pi^{1+\frac{1}{\kappa}} a_0^{1+\frac{1}{\kappa}} a_1}{\zeta(r^{\frac{1}{\kappa}})} Z_3 \tilde{\pi} = \frac{(2\kappa+1)\kappa}{\kappa+1} r^{\frac{1}{\kappa}} \frac{r \pi^{1+\frac{1}{\kappa}} a_0^{1+\frac{1}{\kappa}} a_1}{\zeta(r^{\frac{1}{\kappa}})} h.
        \end{multlined}
    \end{aligned}\right.
\end{equation}

We let $\mathcal{L}$ be the operator of the above linear system, then the equation can be denoted as
\begin{equation}\label{EasyLinearizedNotation}
    \mathcal{L}\begin{pmatrix}
        \tilde{r} \\ \tilde{u} \\ \tilde{\pi}
    \end{pmatrix} = \begin{pmatrix}
        r^{\frac{1}{\kappa}-1}f \\
        \frac{\kappa^2}{\kappa+1}r^\frac{1}{\kappa}a_1^2G_\alpha^\bullet g^\alpha\\
        \frac{(2\kappa+1)\kappa}{\kappa+1}r^\frac{1}{\kappa}\frac{r\pi^{1+\frac{1}{\kappa}}a_0^{1+\frac{1}{\kappa}}a_1}{\zeta(r^\frac{1}{\kappa})}h.
    \end{pmatrix}
\end{equation}
\normalsize
Now, we multiply the first equation of \eqref{Multiplelinearized} by $\tilde{r}$, the second by $\tilde{u}$ and the third by $\tilde{\pi}$. This yields the following identities,

\begin{equation*}
    \begin{aligned}
        &\frac{1}{2}r^{\frac{1}{\kappa}-1}\Dt\tilde{r}^2 + \blue{\kappa r^{\frac{1}{\kappa}} a_1 \p_\mu\tilde{u}^\mu\tilde{r}} + \blue{r^{\frac{1}{\kappa-1}}\p_\mu r \tilde{u}^\mu\tilde{r}} + r^{\frac{1}{\kappa}-1}V_1\tilde{r}^2 + r^{\frac{1}{\kappa}+1}Z_1\tilde{\pi}\tilde{r} = r^{\frac{1}{\kappa}-1}f\tilde{r},\\
        &\begin{multlined}[t]
            \frac{1}{2}\frac{\kappa^2}{\kappa+1} r^{\frac{1}{\kappa}} a_1^2  \Dt(\tilde{u}^\alpha\tilde{u}_\alpha) + \blue{\kappa r^\frac{1}{\kappa}a_1\p_\mu\tilde{r}\tilde{u}^\mu} + \purple{\frac{(2\kappa+1)\kappa}{\kappa+1}r^{\frac{1}{\kappa}+1}a_0^{1+\frac{1}{\kappa}}a_1\p_\mu\tilde{\pi}\tilde{u}^\mu}\\
            + \frac{\kappa^2}{\kappa + 1}r^\frac{1}{\kappa}a_1^2(V_2)^\beta\tilde{r}\tilde{u}_\beta + \frac{\kappa^2}{\kappa+1}r^\frac{1}{\kappa}a_1^2(W_2)^\alpha_\gamma\tilde{u}^\gamma \tilde{u}_\alpha + \frac{\kappa^2}{\kappa+1}r^\frac{1}{\kappa}a_1^2(Z_2)^\beta\tilde{\pi}\tilde{u}_\beta = \frac{\kappa^2}{\kappa+1} r^{\frac{1}{\kappa}} a_1^2 g^\alpha\tilde{u}_\alpha,
         \end{multlined}\\
        &\begin{multlined}[t]
            \frac{1}{2}\frac{(2\kappa+1)\kappa}{\kappa+1}r^\frac{1}{\kappa}\frac{r\pi^{1+\frac{1}{\kappa}}a_0^{1+\frac{1}{\kappa}}a_1}{\zeta(r^\frac{1}{\kappa})}\Dt\tilde{\pi}^2 + \purple{\frac{(2\kappa + 1)
            \kappa}{\kappa +1}r^{\frac{1}{\kappa}+1}a_0^{1+\frac{1}{\kappa}}a_1\p_\mu \tilde{u}^\mu\tilde{\pi}} \\
            +\frac{(2\kappa+1)\kappa}{\kappa+1}r^\frac{1}{\kappa}\frac{r\pi^{1+\frac{1}{\kappa}}a_0^{1+\frac{1}{\kappa}}a_1}{\zeta(r^\frac{1}{\kappa})}V_3 \tilde{r}\tilde{\pi}
            + \frac{(2\kappa+1)\kappa}{\kappa+1}r^\frac{1}{\kappa}\frac{r\pi^{1+\frac{1}{\kappa}}a_0^{1+\frac{1}{\kappa}}a_1}{\zeta(r^\frac{1}{\kappa})}(W_3)_\alpha\tilde{u}^\alpha\tilde{\pi} \\
            + \frac{(2\kappa+1)\kappa}{\kappa+1}r^\frac{1}{\kappa}\frac{r\pi^{1+\frac{1}{\kappa}}a_0^{1+\frac{1}{\kappa}}a_1}{\zeta(r^\frac{1}{\kappa})}Z_3\tilde{\pi}^2 = \frac{(2\kappa+1)\kappa}{\kappa+1}r^\frac{1}{\kappa}\frac{r\pi^{1+\frac{1}{\kappa}}a_0^{1+\frac{1}{\kappa}}a_1}{\zeta(r^\frac{1}{\kappa})}h\tilde{\pi}.
        \end{multlined}
    \end{aligned}
\end{equation*}
\normalsize
We sum over these identities and obtain
\begin{multline}\label{EnergyIdentity}
    \frac{1}{2}r^{\frac{1}{\kappa}-1}\Dt\tilde{r}^2 + \frac{1}{2}\frac{\kappa^2}{\kappa+1} r^{\frac{1}{\kappa}} a_1^2  \Dt(\tilde{u}^\alpha\tilde{u}_\alpha)+\frac{1}{2}\frac{(2\kappa+1)\kappa}{\kappa+1}r^\frac{1}{\kappa}\frac{r\pi^{1+\frac{1}{\kappa}}a_0^{1+\frac{1}{\kappa}}a_1}{\zeta(r^\frac{1}{\kappa})} \Dt\tilde{\pi}^2 = r^{\frac{1}{\kappa}-1}f\tilde{r} \\
    + \frac{\kappa^2}{\kappa+1} r^{\frac{1}{\kappa}} a_1^2 g^\alpha\tilde{u}_\alpha + \frac{(2\kappa+1)\kappa}{\kappa+1}r^\frac{1}{\kappa}\frac{r\pi^{1+\frac{1}{\kappa}}a_0^{1+\frac{1}{\kappa}}a_1}{\zeta(r^\frac{1}{\kappa})}h\tilde{\pi}\\
    -\left(\blue{\kappa r^{\frac{1}{\kappa}} a_1 \p_\mu\tilde{u}^\mu\tilde{r}} + \blue{r^{\frac{1}{\kappa-1}}\p_\mu r \tilde{u}^\mu\tilde{r}}+\blue{\kappa r^\frac{1}{\kappa}a_1\p_\mu\tilde{r}\tilde{u}^\mu}\right)\\
    -\left(\purple{\frac{(2\kappa+1)\kappa}{\kappa+1}r^{\frac{1}{\kappa}+1}a_0^{1+\frac{1}{\kappa}}a_1\p_\mu\tilde{\pi}\tilde{u}^\mu}+\purple{\frac{(2\kappa + 1)
    \kappa}{\kappa +1}r^{\frac{1}{\kappa}+1}a_0^{1+\frac{1}{\kappa}}a_1\p_\mu \tilde{u}^\mu\tilde{\pi}}\right)\\
    -\left(r^{\frac{1}{\kappa}-1}V_1\tilde{r}^2 + r^{\frac{1}{\kappa}+1}Z_1\tilde{\pi}\tilde{r}\right)\\
    -\left(\frac{\kappa^2}{\kappa + 1}r^\frac{1}{\kappa}a_1^2(V_2)^\beta\tilde{r}\tilde{u}_\beta + \frac{\kappa^2}{\kappa+1}r^\frac{1}{\kappa}a_1^2(W_2)^\alpha_\gamma\tilde{u}^\gamma \tilde{u}_\alpha + \frac{\kappa^2}{\kappa+1}r^\frac{1}{\kappa}a_1^2(Z_2)^\beta\tilde{\pi}\tilde{u}_\beta\right)\\
    -\biggl(\frac{(2\kappa+1)\kappa}{\kappa+1}r^\frac{1}{\kappa}\frac{r\pi^{1+\frac{1}{\kappa}}a_0^{1+\frac{1}{\kappa}}a_1}{\zeta(r^\frac{1}{\kappa})}V_3\tilde{r}\tilde{\pi}
    + \frac{(2\kappa+1)\kappa}{\kappa+1}r^\frac{1}{\kappa}\frac{r\pi^{1+\frac{1}{\kappa}}a_0^{1+\frac{1}{\kappa}}a_1}{\zeta(r^\frac{1}{\kappa})}(W_3)_\alpha\tilde{u}^\alpha\tilde{\pi}
    \\
    + \frac{(2\kappa+1)\kappa}{\kappa+1}r^\frac{1}{\kappa}\frac{r\pi^{1+\frac{1}{\kappa}}a_0^{1+\frac{1}{\kappa}}a_1}{\zeta(r^\frac{1}{\kappa})}Z_3\tilde{\pi}^2\biggr).
\end{multline}
The blue terms form a perfect derivative modulo some terms that can be controlled by the energy, i.e.,
\begin{multline}\label{Blue}
    \blue{\kappa r^{\frac{1}{\kappa}} a_1 \partial_\mu \tilde{u}^\mu \tilde{r}}+\blue{r^{\frac{1}{\kappa}-1} \partial_\mu r \tilde{u}^\mu \tilde{r}}+\blue{\kappa r^{\frac{1}{\kappa}} a_1 \partial_\mu \tilde{r} \tilde{u}^\mu}=\blue{\kappa \partial_\mu\left(r^{\frac{1}{\kappa}} a_1 \tilde{r} \tilde{u}^\mu\right)} \\
    \quad-r^{\frac{1}{\kappa}}\left[(1+\kappa) \partial_\mu r+(2 \kappa+1) r a_0^{1+\frac{1}{\kappa}} \partial_\mu \hat{\Pi}\right] \tilde{r} \tilde{u}^\mu.
\end{multline}
We also group the purple terms into one term. We will show later that this term can be controlled.
\begin{multline}\label{Purple}
    \purple{\frac{(2 \kappa+1) \kappa}{\kappa+1} r^{1+\frac{1}{\kappa}}a_0^{\frac{1}{\kappa}+1} a_1 \partial_\mu \tilde{\pi} \tilde{u}^\mu}+\purple{\frac{(2 \kappa+1) \kappa}{\kappa+1} r^{1+\frac{1}{\kappa}}a_0^{\frac{1}{\kappa}+1} a_1 \partial_\mu \tilde{u}^\mu \tilde{\pi}}
    \\
    =\purple{\frac{(2 \kappa+1) \kappa}{\kappa+1} r^{1+\frac{1}{\kappa}}a_0^{\frac{1}{\kappa}+1} a_1 \partial_\mu\left(\tilde{\pi} \tilde{u}^\mu\right)}.
\end{multline}
Differentiating the energy with \eqref{MovingDomainFormula}, we obtain
\begin{equation*}
    \begin{aligned}
        \frac{\mathrm{dE}}{\mathrm{d} t}&=\frac{1}{2} \frac{\mathrm{d}}{\mathrm{d} t} \int_{\Omega_t} r^{\frac{1}{\kappa}-1}\left[\tilde{r}^2+\frac{\kappa^2}{\kappa+1} r a_1^2 \tilde{u}_\alpha \tilde{u}^\alpha+\frac{(2\kappa+1)\kappa}{\kappa+1}r^\frac{1}{\kappa}\frac{r\pi^{1+\frac{1}{\kappa}}a_0^{1+\frac{1}{\kappa}}a_1}{\zeta(r^\frac{1}{\kappa})} \tilde{\pi}^2\right] \mathrm{d} x\\
        &=\begin{multlined}[t]
            \frac{1}{2} \int_{\Omega_t} \frac{1}{u^0} \mathrm{D}_t\left\{r^{\frac{1}{\kappa}-1}\left[\tilde{r}^2+\frac{\kappa^2}{\kappa+1} r a_1^2 \tilde{u}_\alpha \tilde{u}^\alpha+\frac{(2\kappa+1)\kappa}{\kappa+1}r^\frac{1}{\kappa}\frac{r\pi^{1+\frac{1}{\kappa}}a_0^{1+\frac{1}{\kappa}}a_1}{\zeta(r^\frac{1}{\kappa})}\tilde{\pi}^2\right]\right\} \mathrm{d} x\\
            +\frac{1}{2} \int_0\left\{r^{\frac{1}{\kappa}-1}\left[\tilde{r}^2+\frac{\kappa^2}{\kappa+1} r a_1^2 \tilde{u}_\alpha \tilde{u}^\alpha+\frac{(2\kappa+1)\kappa}{\kappa+1}r^\frac{1}{\kappa}\frac{r\pi^{1+\frac{1}{\kappa}}a_0^{1+\frac{1}{\kappa}}a_1}{\zeta(r^\frac{1}{\kappa})} \tilde{\pi}^2\right]\right\} \partial_i\left(\frac{u^i}{u^0}\right) \mathrm{d} x
        \end{multlined}\\
        &= (i) + (ii).
    \end{aligned}
\end{equation*}
The latter term can be controlled readily, i.e.,
\begin{equation*}
    (ii)\lesssim \norm{\partial_i\left(\frac{u^i}{u^0}\right)}_{L^\infty(\Omega_t)}\energy(t)\lesssim \mathscr{C}(K)\energy(t)\cong\mathscr{C}(K)\norm{(\tilde{r},\tilde{u},\tilde{\pi})}_\mathcal{H}^2,
\end{equation*}
where $\mathscr{C}(K)>0$ is a constant depending on $K>0$ subject to change line by line. Now, we control the former term, 
\begin{equation*}
    \begin{aligned}
        (i)&= \frac{1}{2} \int_{\Omega_t}\left(\frac{1}{\kappa}-1\right) \frac{1}{u^0} r^{\frac{1}{\kappa}-2} \mathrm{D}_t r\left[\tilde{r}^2+\frac{\kappa^2}{\kappa+1} r a_1^2\left(\tilde{u}_\alpha \tilde{u}^\alpha\right)+\frac{(2\kappa+1)\kappa}{\kappa+1}r\frac{r\pi^{1+\frac{1}{\kappa}}a_0^{1+\frac{1}{\kappa}}a_1}{\zeta(r^\frac{1}{\kappa})}\widetilde{\pi}^2\right] \mathrm{d} x\\
        &+ \frac{1}{2} \int_{\Omega_t} \frac{1}{u^0} r^{\frac{1}{\kappa}-1}\left(\mathrm{D}_t \tilde{r}^2+\frac{\kappa^2}{\kappa+1} r a_1^2 \mathrm{D}_t\left(\tilde{u}_\alpha \tilde{u}^\alpha\right)+\frac{(2\kappa+1)\kappa}{\kappa+1}r\frac{r\pi^{1+\frac{1}{\kappa}}a_0^{1+\frac{1}{\kappa}}a_1}{\zeta(r^\frac{1}{\kappa})} \mathrm{D}_t \widetilde{\Pi}^2\right) \mathrm{d} x\\
        &+\frac{1}{2} \int_{\Omega_t} \frac{\kappa^2}{\kappa+1} \frac{1}{u^0} r^{\frac{1}{\kappa}-1}\mathrm{D}_t\left(r a_1^2\right) \tilde{u}_\alpha \tilde{u}^\alpha \mathrm{d} x+\frac{1}{2} \int_{\Omega_t} \frac{(2 \kappa+1)\kappa}{\kappa+1} \frac{1}{u^0}r^{\frac{1}{\kappa}-1}\mathrm{D}_t\left[\frac{r\pi^{1+\frac{1}{\kappa}}a_0^{1+\frac{1}{\kappa}}a_1}{\zeta(r^\frac{1}{\kappa})}\right] \widetilde{\Pi}^2 \mathrm{~d} x\\
        &=(iii) + (iv) + (v) + (vi).
    \end{aligned}
\end{equation*}
We must be cautious when a derivative falls on the powers of  $r$, as this could potentially produce terms with insufficient weight. However, this issue does not arise when the derivative is the material derivative $\Dt$ due to \eqref{rEquation}
\begin{equation*}
    \Dt r = -\kappa ra_1\p_\mu u^\mu = rO(K),
\end{equation*}
By a direct estimate (also using \rref{TimeInSpace} to control the time derivatives $\p_t(r,u,\pi)$),
\begin{equation*}
    (iii),(v),(vi)\lesssim \mathscr{C}(K)\norm{(\tilde{r},\tilde{u},\tilde{\pi})}_\mathcal{H}^2.
\end{equation*}
For $(iv)$, we will use the identity \eqref{EnergyIdentity}
\begin{equation*}
    \begin{aligned}
        (iv) &= \int_{\Omega_t}\frac{1}{u^0}r^{\frac{1}{\kappa}-1}f\tilde{r} + \frac{1}{u^0}\frac{\kappa^2}{\kappa+1} r^{\frac{1}{\kappa}} a_1^2 g^\alpha\tilde{u}_\alpha + \frac{1}{u^0}\frac{(2\kappa+1)^2}{\kappa + 1}r^\frac{1}{\kappa}\frac{a_0^\frac{1}{\kappa}(r+1)}{1+(1+\kappa)r}h\tilde{\pi}\,\mathrm{d}x\\
        &-\int_{\Omega_t}\frac{1}{u^0}\left(\blue{\kappa r^{\frac{1}{\kappa}} a_1 \p_\mu\tilde{u}^\mu\tilde{r}} + \blue{r^{\frac{1}{\kappa-1}}\p_\mu r \tilde{u}^\mu\tilde{r}}+\blue{\kappa r^\frac{1}{\kappa}a_1\p_\mu\tilde{r}\tilde{u}^\mu}\right)\,\mathrm{d}x\\
        &-\int_{\Omega_t}\frac{1}{u^0}\left(\purple{\frac{(2\kappa+1)\kappa}{\kappa+1}r^{\frac{1}{\kappa}+1}a_0^{1+\frac{1}{\kappa}}a_1\p_\mu\tilde{\pi}\tilde{u}^\mu}+\purple{\frac{(2\kappa + 1)
        \kappa}{\kappa +1}r^{\frac{1}{\kappa}+1}a_0^{1+\frac{1}{\kappa}}a_1\p_\mu \tilde{u}^\mu\tilde{\pi}}\right)\,\mathrm{d}x\\
        &-\int_{\Omega_t}\frac{1}{u^0}\left(r^{\frac{1}{\kappa}-1}V_1\tilde{r}^2 + r^{\frac{1}{\kappa}+1}Z_1\tilde{\pi}\tilde{r}\right)\,\mathrm{d}x\\
        &-\int_{\Omega_t}\frac{1}{u^0}\left(\frac{\kappa^2}{\kappa + 1}r^\frac{1}{\kappa}a_1^2(V_2)^\beta\tilde{r}\tilde{u}_\beta + \frac{\kappa^2}{\kappa+1}r^\frac{1}{\kappa}a_1^2(W_2)^\alpha_\gamma\tilde{u}^\gamma \tilde{u}_\alpha + \frac{\kappa^2}{\kappa+1}r^\frac{1}{\kappa}a_1^2(Z_2)^\beta\tilde{\pi}\tilde{u}_\beta\right)\mathrm{dx}\\
        &-\begin{multlined}[t]
            \int_{\Omega_t}\frac{1}{u^0}\biggl(\frac{(2\kappa+1)\kappa}{\kappa+1}r^\frac{1}{\kappa}\frac{r\pi^{1+\frac{1}{\kappa}}a_0^{1+\frac{1}{\kappa}}a_1}{\zeta(r^\frac{1}{\kappa})}V_3\tilde{r}\tilde{\pi}
    + \frac{(2\kappa+1)\kappa}{\kappa+1}r^\frac{1}{\kappa}\frac{r\pi^{1+\frac{1}{\kappa}}a_0^{1+\frac{1}{\kappa}}a_1}{\zeta(r^\frac{1}{\kappa})}(W_3)_\alpha\tilde{u}^\alpha\tilde{\pi}
    \\
    + \frac{(2\kappa+1)\kappa}{\kappa+1}r^\frac{1}{\kappa}\frac{r\pi^{1+\frac{1}{\kappa}}a_0^{1+\frac{1}{\kappa}}a_1}{\zeta(r^\frac{1}{\kappa})}Z_3\tilde{\pi}^2\biggr)\,\mathrm{d}x
        \end{multlined}\\
        &=(vii) + (viii) + (ix) + (x).
    \end{aligned}
\end{equation*}
By Cauchy-Schwartz, we have 
\begin{equation*}
    \begin{aligned}
        (vii) &= \int_{\Omega_t}\frac{1}{u^0}r^{\frac{1}{\kappa}-1}f\tilde{r} + \frac{1}{u^0}\frac{\kappa^2}{\kappa+1} r^{\frac{1}{\kappa}} a_1^2 g^\alpha\tilde{u}_\alpha + \frac{1}{u^0}\frac{(2\kappa+1)\kappa}{\kappa+1}r^\frac{1}{\kappa}\frac{r\pi^{1+\frac{1}{\kappa}}a_0^{1+\frac{1}{\kappa}}a_1}{\zeta(r^\frac{1}{\kappa})}h\tilde{\pi}\,\mathrm{d}x\\
        &\lesssim \mathscr{C}(K)\norm{(f,g,h)}_{\mathcal{H}}\norm{(\tilde{r},\tilde{u},\tilde{\pi})}_\mathcal{H}.
    \end{aligned}
\end{equation*}
Similarly, $(x)$, which is the sum of last 3 terms of $(iv)$ can be estimated directly,
\begin{equation*}
    (x)\lesssim \mathscr{C}(K)\norm{(\tilde{r},\tilde{u},\tilde{\pi})}_\mathcal{H}^2.
\end{equation*}
From the blue terms \eqref{Blue},
\begin{equation*}
    \begin{aligned}
        (viii) &= -\int_{\Omega_t}\frac{1}{u^0}\left(\blue{\kappa r^{\frac{1}{\kappa}} a_1 \p_\mu\tilde{u}^\mu\tilde{r}} + \blue{r^{\frac{1}{\kappa-1}}\p_\mu r \tilde{u}^\mu\tilde{r}}+\blue{\kappa r^\frac{1}{\kappa}a_1\p_\mu\tilde{r}\tilde{u}^\mu}\right)\,\mathrm{d}x\\
        &=-\int_{\Omega_t}\frac{1}{u^0}\blue{\kappa \partial_\mu\left(r^{\frac{1}{\kappa}} a_1 \tilde{r} \tilde{u}^\mu\right)}-\frac{1}{u^0}r^{\frac{1}{\kappa}}\left[(1+\kappa) \partial_\mu r+(2 \kappa+1) r a_0^{1+\frac{1}{\kappa}} \partial_\mu \hat{\Pi}\right] \tilde{r} \tilde{u}^\mu \,\mathrm{d}x\\
        &\lesssim -\int_{\Omega_t}\frac{1}{u^0}\blue{\kappa \partial_\mu\left(r^{\frac{1}{\kappa}} a_1 \tilde{r} \tilde{u}^\mu\right)}\,\mathrm{d}x + \mathscr{C}(K)\norm{\tilde{r},\tilde{u},\tilde{\pi}}_\mathcal{H}^2.
    \end{aligned}
\end{equation*}
From the purple terms \eqref{Purple}
\begin{equation*}
    \begin{aligned}
        (ix) &= -\int_{\Omega_t}\frac{1}{u^0}\left(\purple{\frac{(2\kappa+1)\kappa}{\kappa+1}r^{\frac{1}{\kappa}+1}a_0^{1+\frac{1}{\kappa}}a_1\p_\mu\tilde{\pi}\tilde{u}^\mu}+\purple{\frac{(2\kappa + 1)
        \kappa}{\kappa +1}r^{\frac{1}{\kappa}+1}a_0^{1+\frac{1}{\kappa}}a_1\p_\mu \tilde{u}^\mu\tilde{\pi}}\right)\,\mathrm{d}x\\
        &=-\int_{\Omega_t}\purple{\frac{(2 \kappa+1) \kappa}{\kappa+1} r^{1+\frac{1}{\kappa}}{}a_0^{\frac{1}{\kappa}+1} a_1 \partial_\mu\left(\tilde{\pi} \tilde{u}^\mu\right)}\,\mathrm{d}x.
    \end{aligned}
\end{equation*}
Combining all the inequalities above, we have
\begin{equation*}
    \begin{multlined}
        \frac{\mathrm{d}\energy}{\mathrm{d}t} \lesssim -\int_{\Omega_t}\frac{1}{u^0}\blue{\kappa \partial_\mu\left(r^{\frac{1}{\kappa}} a_1 \tilde{r} \tilde{u}^\mu\right)}\,\mathrm{d}x - \int_{\Omega_t}\purple{\frac{(2 \kappa+1) \kappa}{\kappa+1} r^{1+\frac{1}{\kappa}}a_0^{\frac{1}{\kappa}+1} a_1 \partial_\mu\left(\tilde{\pi} \tilde{u}^\mu\right)}\,\mathrm{d}x \\
        + \mathscr{C}(K)\norm{(f,g,h)}_{\mathcal{H}}\norm{\tilde{r},\tilde{u},\tilde{\pi}}_\mathcal{H} + \mathscr{C}(K)\norm{(\tilde{r},\tilde{u},\tilde{\pi})}_\mathcal{H}^2.
    \end{multlined}
\end{equation*}
Now, we take the integral with respect to time,
\begin{equation*}
    \begin{multlined}
        E(t) \lesssim E(0) -\int_0^t\int_{\Omega_s}\frac{1}{u^0}\blue{\kappa \partial_\mu\left(r^{\frac{1}{\kappa}} a_1 \tilde{r} \tilde{u}^\mu\right)}\,\mathrm{d}x\mathrm{d}s \\
        - \int_0^t\int_{\Omega_s}\purple{\frac{(2 \kappa+1) \kappa}{\kappa+1}r^{1+\frac{1}{\kappa}} a_0^{\frac{1}{\kappa}+1} a_1 \partial_\mu\left(\tilde{\pi} \tilde{u}^\mu\right)}\,\mathrm{d}x\mathrm{d}s + \int_0^t\mathscr{C}(K)\norm{(f,g,h)}_{\mathcal{H}}\norm{(\tilde{r},\tilde{u},\tilde{\pi})}_\mathcal{H}\,\mathrm{d}s \\
        + \int_0^t \mathscr{C}(K)\norm{(\tilde{r},\tilde{u},\tilde{\pi})}_\mathcal{H}^2\,\mathrm{d}s
    \end{multlined}
\end{equation*}
We integrate the blue term by part,
\begin{equation*}
    \begin{aligned}
        -\int_0^t\int_{\Omega_s}\frac{1}{u^0}\blue{\kappa \partial_\mu\left(r^{\frac{1}{\kappa}} a_1 \tilde{r} \tilde{u}^\mu\right)}\,\mathrm{d}x\mathrm{d}s &=\begin{multlined}[t]
            \int_0^t\int_{\Omega_s}\p_\mu\left(\frac{1}{u^0}\right)\blue{\kappa \left(r^{\frac{1}{\kappa}} a_1 \tilde{r} \tilde{u}^\mu\right)}\,\mathrm{d}x\mathrm{d}s \\
            - \int_{\Omega_0}\frac{1}{u^0}\blue{\kappa \left(r^{\frac{1}{\kappa}} a_1 \tilde{r} \tilde{u}^\mu\right)}\,dx - \int_{\Omega_t}\frac{1}{u^0}\blue{\kappa \left(r^{\frac{1}{\kappa}} a_1 \tilde{r} \tilde{u}^\mu\right)}\,dx
        \end{multlined}\\
        &\lesssim \epsilon (E(t)+E(0))\cong \epsilon \left(\norm{(\tilde{r},\tilde{u},\tilde{\pi})}^2_\mathcal{H}+\norm{(\tdr_0,\tdu_0,\tdp_0)}^2_{\mathcal{H}}\right),
    \end{aligned}
\end{equation*}
where $\epsilon>0$ is a sufficiently small number due to an extra half power of $r$, which is assumed to be small (see \rref{Smallness}). A similar argument is applied to the purple term, yielding the following,
\begin{equation*}
    - \int_0^t\int_{\Omega_s}\purple{\frac{(2 \kappa+1) \kappa}{\kappa+1} r^{1+\frac{1}{\kappa}}a_0^{\frac{1}{\kappa}+1} a_1 \partial_\mu\left(\tilde{\pi} \tilde{u}^\mu\right)}\,\mathrm{d}x\mathrm{d}s\lesssim \epsilon\left(\norm{(\tilde{r},\tilde{u},\tilde{\pi})}^2_\mathcal{H}+\norm{(\tdr_0,\tdu_0,\tdp_0)}^2_{\mathcal{H}}\right).
\end{equation*}
Absorbing $\epsilon\norm{(\tilde{r},\tilde{u},\tilde{\pi})}_\mathcal{H}^2$ to the LHS, we have

\begin{multline*}
    \norm{(\tilde{r},\tilde{u},\tilde{\pi})}^2_{\mathcal{H}}
        \lesssim \norm{(\tdr(0),\tdu(0),\tdp(0)}^2_{\mathcal{H})} +\int_0^t\mathscr{C}(K)\norm{(f,g,h)}_{\mathcal{H}}\norm{(\tilde{r},\tilde{u},\tilde{\pi})}_\mathcal{H}\,\mathrm{d}s \\
        + \int_0^t \mathscr{C}(K)\norm{(\tilde{r},\tilde{u},\tilde{\pi})}^2_{\mathcal{H}}\,\mathrm{d}s.
\end{multline*}
By the Gr\"{o}nwall-type inequality \eqref{GronwallTypeInequality}, we have our basic energy estimate \eqref{BasicEnergyEstimate}.
\end{proof}
\begin{remark}\label{TimeInSpace}
    In this proof, we treat the time derivatives of the background solutions as \( L^\infty \) coefficients, i.e., we assume \( \norm{\p_t(r,u,\pi)}_{L^\infty} \leq \mathscr{C}(K) \). This differs slightly from \eqref{BackgroundSolutionBounds}, where the Sobolev norms are considered only in space and not in time. However, we can express the time derivatives in terms of spatial derivatives using \eqref{The_nonlinear_system}. Let \( \mathbb{L}(r,u,\pi) \) denote the operator associated with the nonlinear system \eqref{The_nonlinear_system}. This operator can be expressed as
    \begin{equation*}
        \mathbb{L}(r,u,\pi) = A^0 \p_t 
        \begin{pmatrix}
            r \\ u \\ \pi
        \end{pmatrix}
        + A^i \p_i
        \begin{pmatrix}
            r \\ u \\ \pi
        \end{pmatrix}
        + B 
        \begin{pmatrix}
            r \\ u \\ \pi
        \end{pmatrix},
    \end{equation*}
    where \( A^0 = A^0[r,u,\pi], A^i = A^i[r,u,\pi] \), and \( B = B[r,u,\pi] \) are given by
    \begin{equation*}
        \begin{aligned}
            A^0 &= \begin{pmatrix}
                u^0 & \kappa r a_1 & 0 & 0 & 0 & 0 \\
                \left(1 + \frac{1}{k}\right) a_2 \Delta_0^0 & u^0 & 0 & 0 & 0 & \left(2 + \frac{1}{k}\right) r a_3 \Delta_0^0 \\
                \left(1 + \frac{1}{k}\right) a_2 \Delta_1^0 & 0 & u^0 & 0 & 0 & \left(2 + \frac{1}{k}\right) r a_3 \Delta_1^0 \\
                \left(1 + \frac{1}{k}\right) a_2 \Delta_2^0 & 0 & 0 & u^0 & 0 & \left(2 + \frac{1}{k}\right) r a_3 \Delta_2^0 \\
                \left(1 + \frac{1}{k}\right) a_2 \Delta_3^0 & 0 & 0 & 0 & u^0 & \left(2 + \frac{1}{k}\right) r a_3 \Delta_3^0 \\
                0 & r a_4 & 0 & 0 & 0 & u^0
            \end{pmatrix}, \\
            A^i &= \begin{pmatrix}
                u^i & 0 & \kappa r a_1 \delta^i_1 & \kappa r a_1 \delta^i_2 & \kappa r a_1 \delta^i_3 & 0 \\
                \left(1 + \frac{1}{k}\right) a_2 \Delta_0^i & u^i & 0 & 0 & 0 & \left(2 + \frac{1}{k}\right) r a_3 \Delta_0^i \\
                \left(1 + \frac{1}{k}\right) a_2 \Delta_1^i & 0 & u^i & 0 & 0 & \left(2 + \frac{1}{k}\right) r a_3 \Delta_1^i \\
                \left(1 + \frac{1}{k}\right) a_2 \Delta_2^i & 0 & 0 & u^i & 0 & \left(2 + \frac{1}{k}\right) r a_3 \Delta_2^i \\
                \left(1 + \frac{1}{k}\right) a_2 \Delta_3^i & 0 & 0 & 0 & u^i & \left(2 + \frac{1}{k}\right) r a_3 \Delta_3^i \\
                0 & 0 & r a_4 \delta^i_1 & r a_4 \delta^i_2 & r a_4 \delta^i_3 & u^i
            \end{pmatrix}, \\
            B &= \begin{pmatrix}
                0 & 0 & 0 & 0 & 0 & 0 \\
                0 & 0 & 0 & 0 & 0 & 0 \\
                0 & 0 & 0 & 0 & 0 & 0 \\
                0 & 0 & 0 & 0 & 0 & 0 \\
                0 & 0 & 0 & 0 & 0 & 0 \\
                0 & 0 & 0 & 0 & 0 & 1 + \lambda(r^{\frac{1}{\kappa}}) \pi^{2 + \frac{1}{\kappa}}
            \end{pmatrix}.
        \end{aligned}
    \end{equation*}
    The determinant \( \det A^0 \neq 0 \) when \( r \) is small, i.e., near the boundary. Furthermore,
    \begin{equation*}
        \left( A^0 \right)^{-1} = \begin{pmatrix}
            \mathcal{O}(1) & r \mathcal{O}(1) & 0 & 0 & 0 & 0 \\
            \mathcal{O}(1) & \mathcal{O}(1) & 0 & 0 & 0 & r \mathcal{O}(1) \\
            \mathcal{O}(1) & r \mathcal{O}(1) & \mathcal{O}(1) & 0 & 0 & r \mathcal{O}(1) \\
            \mathcal{O}(1) & r \mathcal{O}(1) & 0 & \mathcal{O}(1) & 0 & r \mathcal{O}(1) \\
            \mathcal{O}(1) & r \mathcal{O}(1) & 0 & 0 & \mathcal{O}(1) & r \mathcal{O}(1) \\
            r \mathcal{O}(1) & r \mathcal{O}(1) & 0 & 0 & 0 & \mathcal{O}(1)
        \end{pmatrix},
    \end{equation*}
    where \( \mathcal{O}(1) \) represents some \( L^\infty \) coefficients in the energy estimate. Therefore (see \nref{Bookkeep} for the notation \( \simeq \)),
    \begin{equation*}
        \begin{aligned}
            \p_t r &\simeq \p r + r \p u + r^2 \p \pi + r^2 \pi, \\
            \p_t u &\simeq \p r + \p u + r \p \pi + r \pi, \\
            \p_t \pi &\simeq r \p r + r \p u + \p \pi + \pi.
        \end{aligned}
    \end{equation*}
\end{remark}
\section{Local wellposedness of linearized equations and its high-order estimate}
In this section, we build on the basic energy estimate established in the previous section for the linearized equations to prove their local well-posedness. The basic energy estimate yields only a weak solution to the system. To construct a more regular solution, we introduce weighted elliptic operators derived from the second-order evolution equations of the main system. The relevant properties of this operator, which are crucial for proving the existence of a regular solution, will also be demonstrated in this section.
\subsection{Bookkeeping Scheme}Here, we introduce a bookkeeping scheme to track the crucial terms and omit the less important terms during the energy estimate of the linearized equations. This allows us to greatly simplify the analysis. In view of the weighted version of the Sobolev embedding \eqref{WeightedEmbedding}, we make the following definition,
\begin{definition}
    The order of terms of the form $r^a\p^b\tilde{r}$, $r^a\p^b\tilde{u}$ and $r^a\p^b\tilde{\pi}$ are defined to be $b-a$, $b-a+\frac{1}{2}$ and $b-a+\frac{1}{2}$, respectively. 
\end{definition}
In an estimate of order $2l$, if $b-a = l$, by the embedding theorem \eqref{WeightedEmbedding},
\begin{equation*}
    \norm{r^a\p^b\tilde{r}}_{H^{0,\frac{1}{2\kappa}-\frac{1}{2}}}\lesssim \norm{\tilde{r}}_{H^{2l,l+\frac{1}{2\kappa}-\frac{1}{2}}}.
\end{equation*}
Similarly, if $b - a + \frac{1}{2} = l$, then
\begin{align*}
    \norm{r^a\p^b\tilde{u}}_{H^{0,\frac{1}{2\kappa}-\frac{1}{2}}}&\lesssim \norm{\tilde{u}}_{H^{2l,l+\frac{1}{2\kappa}}},\\
    \norm{r^a\p^b\tilde{\pi}}_{H^{0,\frac{1}{2\kappa}-\frac{1}{2}}}&\lesssim \norm{\tilde{\pi}}_{H^{2l,l+\frac{1}{2\kappa}}}.
\end{align*}
We refer to such terms as exact.

If $b-a<l$ for $\tilde{r}$, $b-a+\frac{1}{2}<l$ for $\tilde{u}$ and $\tilde{\pi}$, then we have a similar control as above,
\begin{align}
    \norm{r^a\p^b\tilde{r}}_{H^{0,\frac{1}{2\kappa}-\frac{1}{2}}}&\lesssim \norm{\tilde{r}}_{H^{2l,l+\frac{1}{2\kappa}-\frac{1}{2}}},\\
    \norm{r^a\p^b\tilde{u}}_{H^{0,\frac{1}{2\kappa}-\frac{1}{2}}}&\lesssim \norm{\tilde{u}}_{H^{2l,l+\frac{1}{2\kappa}}},\\
    \norm{r^a\p^b\tilde{\pi}}_{H^{0,\frac{1}{2\kappa}-\frac{1}{2}}}&\lesssim \norm{\tilde{\pi}}_{H^{2l,l+\frac{1}{2\kappa}}},
\end{align}
These terms are referred to as lower-order.
\begin{notation}\label{Bookkeep}
    We will use the notation
    \begin{equation*}
        \text{Expression 1} \simeq \text{Expression 2}
    \end{equation*}
    if both $\text{Expression 1}$ and $\text{Expression 2}$ are of the same order and 
    \begin{equation*}
        \text{Expression 1} = \text{Expression 2} + \text{L.O.T.},
    \end{equation*}
    where $\text{L.O.T.}$ contains all the terms lower-order than $\text{Expression 1,2}$. 
\end{notation}
For each term in an expression, we will retain the powers of undifferentiated $r$ of the coefficients, as these powers of $r$ are essential for the terms to be controlled. We will omit the parts of the coeffcients involving the derivatives $\p^{\geq 1} r,\p^{\geq 1}\pi$ of $r,\pi$ of order higher than $1$, as well as terms involving $u,a_0,a_1,...,a_4$ and their derivatives. The reasons for this are twofolds. First, these parts of the coeffcients can be treated as $L^\infty$ coeffcients in the estimate. Second, when derivatives are applied to these terms, they will generate lower-order terms. 

As an example of the bookkeeping scheme, from the first equation \eqref{trEquation} of the linearized system in the homogeneous case (i.e., $f=0$):
\begin{equation}\label{Bookkeeping1}
    \Dt\tilde{r} = -\kappa r a_1 \p_\mu\tilde{u}^\mu - \p_\mu r\tilde{u}^\mu - V_1\tilde{r} - r^2Z_1\tilde{\pi} \simeq r\p\tilde{u} + \tilde{u}.
\end{equation}
\begin{remark}\label{LinearizedTimeinSpace}
    Since our weighted Sobolev norms only involve spatial derivatives, in the estimate, we need to write all time derivatives in terms of the spatial derivatives. Linearizing the expressions in \rref{TimeInSpace}, we can show that
    \begin{equation*}
        \begin{aligned}
            \p_t\tilde{r} &\simeq \p\tilde{r} + \text{L.O.T.},\\
            \p_t\tilde{u} &\simeq \p\tilde{u} + \text{L.O.T.},\\
            \p_t\tilde{\pi} &\simeq \p\tilde{\pi} + \text{L.O.T.}.
        \end{aligned}
    \end{equation*} 
    From now on, we use $\p$ denotes only spatial derivatives.
\end{remark}
\begin{remark}
    Similar to \eqref{Bookkeeping1}, we have
    \begin{equation*}
        \begin{aligned}
            \Dt\tilde{u} &\simeq \p\tilde{r},\\
            \Dt\tilde{\pi} &\simeq r\p\tilde{u} + \tilde{u}.
        \end{aligned}
    \end{equation*}
    Counting the order of RHS, $\Dt\tilde{r},\Dt\tilde{u},\Dt\tilde{\pi}$ have an order of $\frac{1}{2},1,\frac{1}{2}$. So, we assign order $\frac{1}{2}$ to the material derivative $\Dt$.
\end{remark}
\subsection{Weighted Elliptic Operators}The linearized equations \eqref{The_linearized_system} in its homogeneous form $(f,g,h = 0)$ can be rewritten as a second-order evolution that has a wave-like structure. Applying $\Dt$ to the equation \eqref{trEquation} and using the equation \eqref{tuEquation}, with the bookkeeping scheme to capture the principal part, we have
\begin{equation}\label{WaveEvolution}
    \begin{aligned}
        \Dt^2\tilde{r} &\simeq \tilde{L}_1\tilde{r},\\
        \Dt^2\tilde{u}^\alpha &\simeq \left(\tilde{L}_2\tilde{u}\right)^\alpha,
    \end{aligned}
\end{equation}
where 
\begin{equation*}
    \begin{aligned}
        \tilde{L}_1 \tilde{r}&=(\kappa+1) H^{i j}\left(r \partial_i \partial_j \tilde{r}+\frac{1}{\kappa} \partial_i r \partial_j \tilde{r}\right),\\
        \left(\tilde{L}_2 \tilde{u}\right)_\alpha&=(\kappa+1) A_\alpha^i H^{j k}\left(\partial_i\left(r \partial_j \tilde{u}_k\right)+\frac{1}{\kappa} \partial_j r \partial_i \tilde{u}_k\right),
    \end{aligned}
\end{equation*}
with
\begin{equation*}
    \begin{aligned}
        A_\alpha^i&=g_\alpha^i-\frac{u^i}{u^0} g_\alpha^0,\\
        H^{i j}&=\delta^{i j}-\frac{u^i u^j}{\left(u^0\right)^2}.
    \end{aligned}
\end{equation*}
We say the second-order evolution \eqref{WaveEvolution} has an wave-like structure because later we will show $\tilde{L}_1$ and $\tilde{L}_2$ satisfy elliptic estimates. More precisely, the operator $\tilde{L}_2$ is only associated to the divergence of $\tilde{u}$. To gain the full coercivity, we need to pair it with an operator $\tilde{L}_3$ associated with the curl of $\tilde{u}$, which is defined as follows,
\begin{equation*}
    \left(\tilde{L}_3 \tilde{u}\right)_\alpha=(\kappa+1) A_\alpha^i H^{i j} r^{-\frac{1}{\kappa}} \partial_j\left(r^{\frac{1}{\kappa}+1}\left(\partial_k \tilde{u}_i-\partial_i \tilde{u}_k\right)\right).
\end{equation*}
We can check that $\tilde{L}_2\tilde{L}_3\simeq\tilde{L}_3\tilde{L}_2\simeq0$. Although we will not use these operators directly in connection to the wave equations, they play an important role in the high-order energy estimates and in constructing more regular solutions of the linearized equation. We sometimes employ a slightly modified versions $\hat{L}_1,\hat{L}_2 + \hat{L}_3$ of these operators $\tilde{L}_1,\tilde{L}_2+\tilde{L}_3$ for a different purpose, but they share the same principal parts,
\begin{equation}\label{ModifiedEllipitcOperators}
    \begin{aligned}
        \hatL_1 \tdr &= (\kappa + 1)r^{1-\frac{1}{\kappa}}\p_i\left(r^\frac{1}{\kappa}H^{ij}\p_j\tdr\right),\\
        (\hatL_2 \tdu)_\alpha &= (\kappa+1)A_\alpha^i\p_i(r^{1-\frac{1}{\kappa}} \p_j (r^\frac{1}{\kappa} H^{ij} \tdu_k)),\\
        (\hatL_3 \tdu)_\alpha &= (\kappa + 1)A^l_\alpha\Delta_{ml}r^{-\frac{1}{\kappa}}\p_j(r^{\frac{1}{\kappa}+1}H^{jk}H^{im}(\p_k\tdu_i-\p_i\tdu_k)).
    \end{aligned}
\end{equation}
We have following properties for these operators,
\begin{lemma}\label{SelfAdjoint1}
    Let $(r,u,\pi)$ be a sufficiently smooth solution to \eqref{The_nonlinear_system} satisfying the physical vacuum boundary condition \eqref{Physical_vacuum_boundary_condition}. Then, the operator $\hat{L}_1$ defined as an unbounded operator of the Hilbert space $H^{0,\frac{1}{2\kappa}-\frac{1}{2}}$ with the domain
    \begin{equation*}
        \mathcal{D}(\hat{L}_1):=\{f\in H^{0,\frac{1}{2\kappa}-\frac{1}{2}}\vert \hat{L}_1 f\in H^{0,\frac{1}{2\kappa}-\frac{1}{2}}\,\text{in the distributional sense}\}
    \end{equation*}
    is a non-negative, self-adjoint operator.
\end{lemma}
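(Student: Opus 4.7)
The plan splits into three steps: symmetry, semi-definiteness of the associated quadratic form, and the upgrade from symmetry to self-adjointness on the maximal distributional domain.

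For symmetry, the key observation is that the weight $r^{\frac{1}{\kappa}-1}$ defining the Hilbert space $H^{0,\frac{1}{2\kappa}-\frac{1}{2}}$ is tailored to cancel the prefactor $r^{1-\frac{1}{\kappa}}$ appearing in \eqref{ModifiedEllipitcOperators}. For test functions $f,g \in C_c^\infty(\Omega_t)$, I compute
\begin{equation*}
    \langle \hatL_1 f, g\rangle_{H^{0,\frac{1}{2\kappa}-\frac{1}{2}}} = (\kappa+1)\int_{\Omega_t} \partial_i\bigl(r^{\frac{1}{\kappa}} H^{ij}\partial_j f\bigr) g\,dx = -(\kappa+1)\int_{\Omega_t} r^{\frac{1}{\kappa}} H^{ij}\partial_i f\,\partial_j g\,dx,
\end{equation*}
where the boundary contribution vanishes since $r^{\frac{1}{\kappa}}\to 0$ on $\Gamma_t$ by the physical vacuum condition $r\simeq d$. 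Symmetry follows from the symmetry of $H^{ij}$, and the sign on the diagonal follows from positive-definiteness of $H^{ij}$: since $u$ is future-timelike with $u^\mu u_\mu = -1$, we have $H^{ij}\xi_i\xi_j \geq |\xi|^2/(u^0)^2 > 0$, establishing the (semi-)definiteness statement of the lemma.

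To upgrade symmetry on $C_c^\infty(\Omega_t)$ to self-adjointness on the maximal distributional domain $\D(\hatL_1)$, I would use the Friedrichs construction. Define the sesquilinear form
\begin{equation*}
    Q(f,g) := (\kappa+1)\int_{\Omega_t} r^{\frac{1}{\kappa}} H^{ij}\partial_i f\,\partial_j g\,dx
\end{equation*}
on $C_c^\infty(\Omega_t)$, equipped with the form norm $\|\cdot\|_{H^{0,\frac{1}{2\kappa}-\frac{1}{2}}}^2 + Q(\cdot,\cdot)$. Since $Q$ is non-negative, densely defined and closable on $H^{0,\frac{1}{2\kappa}-\frac{1}{2}}$, its closure defines a unique non-negative self-adjoint operator $\mathcal{A}$ that agrees with $-\hatL_1$ on smooth compactly supported functions. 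It remains to identify $\D(\mathcal{A})$ with $\D(\hatL_1)$.

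The main obstacle is this last identification, i.e.\ showing that no additional boundary condition at $\Gamma_t$ is required beyond the distributional bound $\hatL_1 f \in H^{0,\frac{1}{2\kappa}-\frac{1}{2}}$. This is the Keldysh--Fichera phenomenon: the degeneration of the coefficient $r^{\frac{1}{\kappa}}$ at the free boundary serves as an intrinsic boundary condition. Given $f \in \D(\hatL_1)$, I would approximate by $\chi_\epsilon \tau_\delta f$, where $\chi_\epsilon$ is a cutoff supported in $\{d \geq \epsilon\}$ and $\tau_\delta$ is an interior mollification. The commutator $[\hatL_1,\chi_\epsilon]$ produces terms involving $\nabla\chi_\epsilon$ supported in a band of volume $O(\epsilon)$ on which $r^{\frac{1}{\kappa}}\sim \epsilon^{\frac{1}{\kappa}}$; balancing these factors against the hypothesis $\hatL_1 f \in H^{0,\frac{1}{2\kappa}-\frac{1}{2}}$ shows the cutoff error tends to zero as $\epsilon \to 0$. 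This places $f \in \D(\mathcal{A})$, and the reverse inclusion $\D(\mathcal{A})\subset \D(\hatL_1)$ follows directly from the definition of the Friedrichs extension together with standard interior elliptic regularity, completing the argument.
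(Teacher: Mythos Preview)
Your symmetry and non-negativity computations on $C_c^\infty(\Omega_t)$ are correct, and the Friedrichs extension gives a legitimate self-adjoint realization $\mathcal{A}$. However, the crucial identification $\mathcal{D}(\mathcal{A})=\mathcal{D}(\hatL_1)$ does not close with the power-counting you sketch. The commutator $[\hatL_1,\chi_\epsilon]f$ contains a term of the form $r^{1-\frac{1}{\kappa}}\,r^{\frac{1}{\kappa}}H^{ij}(\partial_i\partial_j\chi_\epsilon)f$ with $|\partial^2\chi_\epsilon|\sim\epsilon^{-2}$ on the band $\{d\sim\epsilon\}$; measured in $H^{0,\frac{1}{2\kappa}-\frac{1}{2}}$ this is $\epsilon^{-2}\int_{\{d\sim\epsilon\}}r^{\frac{1}{\kappa}-1}|f|^2$, and membership $f\in H^{0,\frac{1}{2\kappa}-\frac{1}{2}}$ only gives $o(1)$ for the integral, not $o(\epsilon^2)$. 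Similarly, convergence in the form norm requires $Q(f,f)<\infty$ a priori, and the hypothesis $\hatL_1 f\in H^{0,\frac{1}{2\kappa}-\frac{1}{2}}$ does not yield this without invoking the weighted elliptic estimate, which you have not used.

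The paper takes a different and more direct route: it shows the inclusion $\mathcal{D}(\hatL_1^*)\subset\mathcal{D}(\hatL_1)$ by a purely distributional computation exploiting the fact that the weight $r^{\frac{1}{\kappa}-1}$ exactly cancels the prefactor $r^{1-\frac{1}{\kappa}}$ in $\hatL_1$, so that for $\tilde s\in\mathcal{D}(\hatL_1^*)$ and test $\phi\in C_c^\infty$ one has $(\hatL_1\tilde s)(\phi)=(\tilde s,\hatL_1(r^{1-\frac{1}{\kappa}}\phi))_{H^{0,\frac{1}{2\kappa}-\frac{1}{2}}}=(\hatL_1^*\tilde s,r^{1-\frac{1}{\kappa}}\phi)_{H^{0,\frac{1}{2\kappa}-\frac{1}{2}}}$, placing $\hatL_1\tilde s\in H^{0,\frac{1}{2\kappa}-\frac{1}{2}}$. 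No cutoff is needed for this step. The paper then forward-references the domain identification $\mathcal{D}(\hatL_1)=H^{2,\frac{1}{2\kappa}+\frac{1}{2}}$ (Corollary~\ref{DomainOfWeightedEllipticOperators}, proved via the weighted elliptic estimates of Lemma~\ref{EllipticEstimates}) to justify integration by parts on the full domain, closing symmetry and non-negativity. Your Friedrichs approach can be repaired by inserting the same elliptic estimate to first obtain $f\in H^{1,\frac{1}{2\kappa}}$; without it the cutoff balancing fails.
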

\begin{proof}
    Similar statements and proofs can be found in  \cites{ifrim2023compressible,disconzi2022relativistic}. We first show that $\mathcal{D}(\hat{L}_1^*)=\mathcal{D}(\hat{L}_1)$. It is sufficient to show $\mathcal{D}(\hat{L}_1^*)\subset\mathcal{D}(\hat{L}_1)$. Let $\tilde{s}\in \mathcal{D}(\hat{L}_1^*)$, then
    \begin{equation*}
        (\hatL_1\tdr,\tilde{s})_{H^{0,\frac{1}{2\kappa}-\frac{1}{2}}} = (\tdr,\hatL_1^*\tilde{s})_{H^{0,\frac{1}{2\kappa}-\frac{1}{2}}},\forall\tdr\in\mathcal{D}(\hat{L}_1).
    \end{equation*}
    We want to show $\tilde{s}\in\mathcal{D}(\hat{L}_1)$, i.e., $\hatL_1\tilde{s}\in H^{0,\frac{1}{2\kappa}-\frac{1}{2}}$. For any $\phi\in C^\infty_c$,
    \begin{align*}
        (\hatL_1\tilde{s})(\phi)&=\int \tilde{s}(\kappa+1)\p_j(r^\frac{1}{\kappa}H^{ij}\p_i(r^{\frac{1}{\kappa}-1}\phi)) dx\\
        &=(\tilde{s},\hatL_1(r^{\frac{1}{\kappa}-1}\phi))_{H^{0,\frac{1}{2\kappa}-\frac{1}{2}}}\\
        &=(\hatL_1^*\tilde{s},r^{\frac{1}{\kappa}-1}\phi)_{H^{0,\frac{1}{2\kappa}-\frac{1}{2}}}\\
        &=\int \hatL_1^*\tilde{s}\phi dx.
    \end{align*}
    Since $\hatL_1^*\tilde{s}\in H^{0,\frac{1}{2\kappa}-\frac{1}{2}}$, $\hatL_1\tilde{s}\in H^{0,\frac{1}{2\kappa}-\frac{1}{2}}$ in the distributional sense and thus $\tilde{s}\in\mathcal{D}(\hat{L}_1)$. In view of \lref{DomainOfWeightedEllipticOperators}, the non-negativity and self-adjointness can be proven by a simple application of integration by parts.
\end{proof}

With a similar argument as the above lemma, we obtain
\begin{lemma}\label{SelfAdjoint2}
    Let $(r,u,\pi)$ be a sufficiently smooth solution to \eqref{The_nonlinear_system} satisfying the physical vacuum boundary condition \eqref{Physical_vacuum_boundary_condition}. Then, the operator $\hat{L}_2+\hat{L}_3$ defined as an unbounded operator of the Hilbert space $H^{0,\frac{1}{2\kappa}}$ with the domain
    \begin{equation*}
        \mathcal{D}(\hat{L}_2+\hat{L}_3):=\{f\in H^{0,\frac{1}{2\kappa}} \vert (\hat{L}_2+\hat{L}_3) f\in H^{0,\frac{1}{2\kappa}}\,\text{in the distributional sense}\}
    \end{equation*}
    is a non-negative, self-adjoint operator.
\end{lemma}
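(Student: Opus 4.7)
My plan is to mirror the proof of \lref{SelfAdjoint1} above, with the scalar operator $\hatL_1$ and its weight $r^{\frac{1}{2\kappa}-\frac{1}{2}}$ replaced by the vector-valued operator $\hatL_2+\hatL_3$ acting on the Hilbert space $H^{0,\frac{1}{2\kappa}}$ (understood in the vector-valued sense since $\hatL_2,\hatL_3$ carry a free index $\alpha$). The argument splits into two steps: first showing that $\D((\hatL_2+\hatL_3)^*)=\D(\hatL_2+\hatL_3)$, and then verifying symmetry together with non-negativity of the associated bilinear form via integration by parts.

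First I would establish the inclusion $\D((\hatL_2+\hatL_3)^*)\subseteq\D(\hatL_2+\hatL_3)$ directly by the same distributional argument used for $\hatL_1$. Given $\tilde{v}\in\D((\hatL_2+\hatL_3)^*)$ and a vector-valued test function $\phi\in C^\infty_c(\Omega_t;\R^4)$, I would compute the distributional pairing $((\hatL_2+\hatL_3)\tilde{v})(\phi)$ by inserting an appropriate power of $r$ to rewrite $\phi$ in the form compatible with the $H^{0,\frac{1}{2\kappa}}$ inner product weight $r^{\frac{1}{\kappa}}$, applying the defining identity of the adjoint, and integrating by parts in the explicit expressions \eqref{ModifiedEllipitcOperators} to move all derivatives onto $\phi$. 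The weights $r^{1-\frac{1}{\kappa}}$ and $r^{-\frac{1}{\kappa}}$ that sit outside the derivatives in $\hatL_2$ and $\hatL_3$ combine with the $r^{\frac{1}{\kappa}}$ from the inner product in such a way that the resulting distributional identity shows $(\hatL_2+\hatL_3)\tilde{v}\in H^{0,\frac{1}{2\kappa}}$, hence $\tilde{v}\in\D(\hatL_2+\hatL_3)$. The reverse inclusion then follows from the symmetry established in the next step.

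For symmetry and non-negativity on the common domain, I would compute $((\hatL_2+\hatL_3)\tilde{u},\tilde{u})_{H^{0,\frac{1}{2\kappa}}}$ by integration by parts. The essential structural facts are: the outer $A^i_\alpha \p_i$ in $\hatL_2$ paired against $\tilde{u}^\alpha$ produces, after one IBP, a quadratic form built from (the appropriate $A,H$-projected version of) $\dive\tilde{u}$, while the antisymmetrized derivatives in $\hatL_3$ produce an analogous form built from $\operatorname{curl}\tilde{u}$; the near-orthogonality of these two pieces reflected in $\hatL_2\hatL_3\simeq\hatL_3\hatL_2\simeq 0$ is the analog of a Hodge decomposition. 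Summing, one obtains two manifestly non-negative quadratic forms weighted by $r^{1+\frac{1}{\kappa}}$, proving symmetry (hence $\D(\hatL_2+\hatL_3)\subseteq\D((\hatL_2+\hatL_3)^*)$) and non-negativity simultaneously.

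The main obstacle is the careful vanishing of the boundary terms produced during these integrations by parts. The projector $A^i_\alpha = g^i_\alpha - \frac{u^i}{u^0}g^0_\alpha$ and the modified spatial metric $H^{ij}=\delta^{ij}-\frac{u^iu^j}{(u^0)^2}$ generate cross-terms absent in the scalar case of $\hatL_1$, so these must be tracked explicitly when arranging the IBPs; however, the positive power of $r$ appearing outside of all derivatives in \eqref{ModifiedEllipitcOperators}, the physical vacuum behavior $r\simeq d$ on $\Gamma_t$, and the domain characterization via the analog of \lref{DomainOfWeightedEllipticOperators} for this operator combine to kill all boundary contributions in the appropriate trace sense, exactly as in the proof of \lref{SelfAdjoint1}.
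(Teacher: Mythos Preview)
Your proposal is correct and follows essentially the same approach as the paper, which offers no separate proof for this lemma beyond the sentence ``With a similar argument as the above lemma, we obtain'' (referring to \lref{SelfAdjoint1}). Your plan to mirror that argument---establishing the domain equality by the distributional computation and then checking symmetry and non-negativity via integration by parts, with the boundary terms handled through the domain characterization analogous to \lref{DomainOfWeightedEllipticOperators}---is exactly what the paper intends, and in fact supplies more detail than the paper does.
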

\begin{remark}
    In \lref{SelfAdjoint2}, the operator  $\hat{L}_2 + \hat{L}_3$  is an unbounded operator on a subspace of  $H^{0, \frac{1}{2\kappa}}$ consisting of vectors orthogonal to  $u$  with respect to the Minkowski metric  $g$ . For simplicity, we slightly abuse notation by using  $H^{0, \frac{1}{2\kappa}}$  to denote this subspace, to which the linearized 4-velocity belongs.
\end{remark}
\subsection{Elliptic Estimates}Here, we show the elliptic estimate of the operator $\tilde{L}_1$ and $\tilde{L}_2+\tilde{L}_3$
\begin{lemma}\label{EllipticEstimates}
    Let $(r,u,\pi)$ be a sufficiently smooth solution to \eqref{The_nonlinear_system} satisfying the physical vacuum boundary condition \eqref{Physical_vacuum_boundary_condition}. For $\tilde{r}$ sufficiently smooth, we have
    \begin{equation}\label{BasicREllipitc}
        \norm{\tilde{r}}_{H^{2,\frac{1}{2\kappa}-\frac{1}{2}}}\lesssim \norm{\tilde{L}_1\tilde{r}}_{H^{0,\frac{1}{2\kappa}-\frac{1}{2}}} + \norm{\tilde{r}}_{H^{0,\frac{1}{2\kappa}-\frac{1}{2}}}.
    \end{equation}
    For a sufficiently smooth $\tilde{u}$ orthogonal to $u$, we have
    \begin{equation}\label{BasicUElliptic}
        \norm{\tilde{u}}_{H^{2,\frac{1}{2\kappa}}}\lesssim \norm{\left(\tilde{L}_2+\tilde{L}_3\right)\tilde{u}}_{H^{0,\frac{1}{2\kappa}}} + \norm{\tilde{u}}_{H^{0,\frac{1}{2\kappa}}}.
    \end{equation}
\end{lemma}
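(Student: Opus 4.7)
The plan is to leverage the self-adjoint, non-negative structure of the modified operators $\hatL_1$ and $\hatL_2+\hatL_3$ from \lref{SelfAdjoint1} and \lref{SelfAdjoint2}. Since the pairs $(\tilde L_1, \hatL_1)$ and $(\tilde L_2+\tilde L_3, \hatL_2+\hatL_3)$ share the same principal parts (as can be read off from \eqref{ModifiedEllipitcOperators}), estimates \eqref{BasicREllipitc} and \eqref{BasicUElliptic} will follow once the analogous estimates are established for the hatted operators, the difference being lower-order terms that can be absorbed on the right-hand side.

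For the scalar estimate \eqref{BasicREllipitc}, the first step is a coercivity identity obtained by integrating by parts in the pairing $(\hatL_1 \tdr, \tdr)_{H^{0,\frac{1}{2\kappa}-\frac{1}{2}}}$, which yields
\[
(\hatL_1 \tdr, \tdr)_{H^{0,\frac{1}{2\kappa}-\frac{1}{2}}} = -(\kappa+1)\int_{\Omega_t} r^{\frac{1}{\kappa}} H^{ij}\partial_i \tdr\,\partial_j \tdr\,dx.
\]
Since $H^{ij}$ is positive-definite (it is the spatial part of the projection onto the hyperplane orthogonal to $u$), Cauchy-Schwarz then gives a weighted $H^{1,\frac{1}{2\kappa}}$-bound for $\tdr$ in terms of $\|\hatL_1 \tdr\|_{H^{0,\frac{1}{2\kappa}-\frac{1}{2}}}$ and $\|\tdr\|_{H^{0,\frac{1}{2\kappa}-\frac{1}{2}}}$. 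The next step is to commute tangential vector fields $T$ adapted to $\Gamma_t$ (so that $T$ applied to $r$ is $O(r)$) past $\hatL_1$; the commutators $[T,\hatL_1]$ are of the same principal order but their coefficients preserve the relevant weights, so reapplying coercivity to $T\tdr$ and $T^2\tdr$ controls all tangential and mixed second derivatives in the target norm. Finally, the purely normal second derivative $\partial_r^2 \tdr$ is recovered algebraically from the equation $\hatL_1 \tdr = F$: since the coefficient of $\partial_r^2$ in the principal part of $\hatL_1$ is $(\kappa+1) r H^{rr}$, with $H^{rr}$ bounded away from zero near $\Gamma_t$, one can solve for $\partial_r^2 \tdr$ in terms of $F$, the already-controlled tangential and mixed second derivatives, and lower-order quantities.

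The vectorial estimate \eqref{BasicUElliptic} follows an analogous outline, with the joint operator $\hatL_2+\hatL_3$ controlling the divergence and curl of $\tdu$ separately, as indicated by $\tilde L_2 \tilde L_3 \simeq \tilde L_3 \tilde L_2 \simeq 0$ (which signals that the cross interaction is of lower order). A Hodge-type decomposition then converts the joint control of $(\operatorname{div} \tdu, \operatorname{curl}\tdu)$ into full control of $\partial \tdu$, and the tangential commutator and normal-recovery arguments proceed as in the scalar case. The main technical obstacle in both cases is the weight matching in the recovery of the normal second derivative: naively dividing the principal part by $r$ to solve for $\partial_r^2$ introduces a factor of $r^{-1}$, and only the specific combination on the right-hand side of the equation, together with the physical vacuum exponent \eqref{Physical_vacuum_boundary_condition} being tuned so that the target weight $\frac{1}{2\kappa}-\frac{1}{2}$ exactly matches, allows the result to be placed back into $L^2(r^{\frac{1}{\kappa}-1})$. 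This delicate bookkeeping closely parallels the arguments in \cite{ifrim2023compressible,disconzi2022relativistic} for Euler-type free-boundary problems, which is why the authors invoke them by reference.
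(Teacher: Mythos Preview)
Your outline has a genuine gap in the second-derivative step. To apply coercivity to $T\tdr$ you need to control $\|\hatL_1(T\tdr)\|_{H^{0,\frac{1}{2\kappa}-\frac{1}{2}}}$, and after writing $\hatL_1(T\tdr)=T(\hatL_1\tdr)-[T,\hatL_1]\tdr$ the first term on the right is a \emph{derivative} of $\hatL_1\tdr$, which is not furnished by the hypothesis $\hatL_1\tdr\in H^{0,\frac{1}{2\kappa}-\frac{1}{2}}$. The commutator-and-reapply-coercivity strategy works at higher order (see \lref{HighOrderEllipticEstimates}) precisely because one then has $\tdL_1\tdr$ in a higher-regularity space on the right; at the base level it loses a derivative. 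There is also a logical circularity in invoking \lref{SelfAdjoint1} here, since its proof appeals to \lref{DomainOfWeightedEllipticOperators}, which in turn relies on the present elliptic estimate; in practice you only need the integration-by-parts identity, not self-adjointness, so this is repairable, but it should not be cited.

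The paper proceeds differently. For the second-derivative control it expands $\|\tdL_1\tdr\|^2_{H^{0,\frac{1}{2\kappa}-\frac{1}{2}}}$ directly as an integral and integrates by parts twice (once in $\p_i$, once in $\p_k$) to produce the positive term $\int r^{\frac{1}{\kappa}+1}H^{ij}H^{kl}\p_j\p_k\tdr\,\p_i\p_l\tdr\,dx$, with all other products controlled by Cauchy--Schwarz with $\epsilon$. This gives the full second-derivative bound in one stroke, without any commutation or algebraic normal recovery. For the first-order estimate the paper pairs $\tdL_1\tdr$ not with $\tdr$ (as you propose) but with the \emph{normal} derivative $\p_n\tdr$; integrating by parts in $\p_n$ then drops a factor of $r$ and yields the sharper weight $r^{\frac{1}{\kappa}-1}$ needed for $\|\tdr\|_{H^{1,\frac{1}{2\kappa}-\frac{1}{2}}}$, whereas pairing with $\tdr$ would only give weight $r^{\frac{1}{\kappa}}$.
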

\begin{proof}
    By a partition of unity, we can localize the estimates to a small ball. We consider the interesting case where the ball is centered at $x_0$ on the free boundary. The elliptic estimate is standard otherwise. We pick a coordinate $e_3=\frac{(\grad r)(x_0)}{\abs{(\grad r)(x_0)}}$ and $e_1,...,e_2$ an orthonormal basis of the tangent space of the free boundary at $x_0$. We have 
    \begin{equation*}
        \abs{\p_3 -1},\abs{\p_{i'}r} < \epsilon,
    \end{equation*}
    where $\epsilon>0$ is sufficiently small and $i'=1,2$. We assume the ball is small enough such that $H^{ij}$ is a constant modulo perturbative error that can be absorbed onto the other side. So, we simply assume $H^{ij}$ is a constant in the estimates.
    To prove \eqref{BasicREllipitc}, we first prove
    \begin{equation}\label{REllipticPre1}
        \norm{\tdr}_{\Wnorm{2}{\frac{1}{2\kappa}+\frac{1}{2}}}\lesssim\norm{\tdL_1\tdr}_{\Wnorm{0}{\frac{1}{2\kappa}-\frac{1}{2}}} + \norm{\tdr}_{\Wnorm{1}{\frac{1}{2\kappa}-\frac{1}{2}}}.
    \end{equation}
    Compute $\norm{\tdL_1\tdr}_{\Wnorm{0}{\frac{1}{2\kappa}-\frac{1}{2}}}$,
    \begin{equation*}
        \begin{aligned}
            \norm{\tdL_1\tdr}_{\Wnorm{0}{\frac{1}{2\kappa}-\frac{1}{2}}}^2&=\begin{multlined}[t]
                \int_{\Omega_t}(\kappa+1)^2H^{ij}H^{kl}\left(r\p_i\p_j\tdr + \frac{1}{\kappa}\p_i r\p_j\tdr\right)\left(r\p_k\p_l+\frac{1}{\kappa}\p_k r\p_l\tdr\right)\dx
            \end{multlined}\\
            &\simeq (i) + (ii) + (iii) +(iv),
        \end{aligned}
    \end{equation*}
    where $(i)$ is the product of the 2-derivatives, $(ii),(iii)$ are the products of the 1-derivatives and 2-derivatives, and $(iv)$ is the product of the 1-derivatives.
    For $(i)$ we integrate by parts with respect to $\p_i$,
    \begin{equation*}
        \begin{aligned}
            (i) &= \int_{\Omega_t}r^{\frac{1}{\kappa}+1}H^{ij}H^{kl}\p_i\p_j\tdr\p_k\p_l\tdr\dx\\
            &= -\int_{\Omega_t}\left(\frac{1}{\kappa}+1\right)r^\frac{1}{\kappa}H^{ij}H^{kl}\p_j\tdr\dx -\int_{\Omega_t}r^{\frac{1}{\kappa}+1}H^{ij}H^{kl}\p_j\tdr\p_i\p_k\p_l\tdr\dx\\
            &= (v) + (vi).
        \end{aligned}
    \end{equation*}
    By Cauchy-Schwartz with $\epsilon>0$,
    \begin{equation*}
        \begin{aligned}
            \abs{(v)}&\lesssim\int_{\Omega_t}\abs{H^{ij}H^{kl}\p_i r}\left(r^{\frac{1}{2\kappa}-\frac{1}{2}}\abs{\p_j\tdr}\right)\left(r^{\frac{1}{2\kappa}+\frac{1}{2}}\abs{\p_k\p_l\tdr}\right)\dx\\
            &\lesssim \scrC(K)\norm{\tdr}_{\Wnorm{1}{\frac{1}{2\kappa}-\frac{1}{2}}} + \epsilon\norm{\tdr}^2_{\Wnorm{2}{\frac{1}{2\kappa}+\frac{1}{2}}},
        \end{aligned}
    \end{equation*}
    where $\scrC(K)$ is a constant continuously depending on $K>0$ and 
    \begin{equation*}
        \norm{(r,u,\pi)}_{W^{1,\infty}}\lesssim K.
    \end{equation*}
    For $(vi)$, we integrate by parts with respect to $\p_k$,
    \begin{equation*}
        \begin{aligned}
            (vi) &=\int_{\Omega_t}\left(\frac{1}{\kappa}+1\right)r^\frac{1}{\kappa} \p_k rH^{ij}H^{kl}\p_j \tdr \p_i\p_l\tdr\dx + \int_{\Omega_t}r^{\frac{1}{\kappa}+1}H^{ij}H^{kl}\p_j\p_k\tdr\p_i\p_l\tdr\dx\\
            &= (vii) + (viii).
        \end{aligned}
    \end{equation*}
    We notice that $(ii),(iii)$ and $(vii)$ can be estimated similarly to $(v)$,
    \begin{equation*}
        \abs{(ii)},\abs{(iii)},\abs{(vii)} \lesssim \scrC(K)\norm{\tdr}^2_{\Wnorm{1}{\frac{1}{2\kappa}-\frac{1}{2}}} + \epsilon \norm{\tdr}^2_{\Wnorm{2}{\frac{1}{2\kappa}+\frac{1}{2}}}.
    \end{equation*}
    By the ellipticity of the metric $H$,
    \begin{equation*}
        (viii) \gtrsim \norm{r^{\frac{1}{2\kappa}+\frac{1}{2}}\p^2\tdr}_{L^2}^2.
    \end{equation*}
    For $(iv)$,
    \begin{equation*}
        \abs{(iv)}\lesssim \scrC(K)\norm{\tdr}_{\Wnorm{1}{\frac{1}{2\kappa}-\frac{1}{2}}}^2.
    \end{equation*}
    Combining all the above inequalities, we have
    \begin{equation*}
        \norm{\tdr}_{\Wnorm{2}{\frac{1}{2\kappa}+\frac{1}{2}}}\lesssim_{\scrC(K)}\norm{\tdL_1\tdr}_{\Wnorm{0}{\frac{1}{2\kappa}-\frac{1}{2}}} + \norm{\tdr}_{\Wnorm{1}{\frac{1}{2\kappa}-\frac{1}{2}}}.
    \end{equation*}
    Now, we prove 
    \begin{equation}\label{REllipticPre2}
        \norm{\tdr}_{\Wnorm{1}{\frac{1}{2\kappa}-\frac{1}{2}}}\lesssim\norm{\tdL_1\tdr}_{\Wnorm{0}{\frac{1}{2\kappa}-\frac{1}{2}}} + \norm{\tdr}_{\Wnorm{0}{\frac{1}{2\kappa}-\frac{1}{2}}}.
    \end{equation}
    Consider the integral and integrate by parts with respect to $\p_n$,
    \begin{equation*}
        \begin{aligned}
            \int_{\Omega_t}r^{\frac{1}{\kappa}-1}\tdL_1\tdr\p_n\tdr &= \int_{\Omega_t}r^{\frac{1}{\kappa}-1}\p_n\tdr H^{ij}\left(r\p_i\p_j\tdr+\frac{1}{\kappa}\p_i r\p_j\tdr\right)\dx\\
            &=\begin{multlined}[t]
                -(\kappa+1)\frac{1}{\kappa}\int_{\Omega_t}r^{\frac{1}{\kappa}-1}\p_i rH^{ij}\p_n\tdr\p_j\tdr\dx \\
                - \frac{\kappa+1}{2}\int_{\Omega_t}r^\frac{1}{\kappa}\p_n(H^{ij}\p_i\tdr\p_j\tdr)\dx \\
                + (\kappa+1)\frac{1}{\kappa}\int_{\Omega_t}r^{\frac{1}{\kappa}-1}\p_i rH^{ij}\p_n\tdr\p_j\tdr\dx
            \end{multlined}\\
            &=\frac{\kappa+1}{2\kappa}r^{\frac{1}{\kappa}-1}\p_n H^{ij}\p_i\tdr\p_j\tdr\dx\\
            &\gtrsim\norm{r^{\frac{1}{2\kappa}-\frac{1}{2}}\p\tdr}^2_{L^2}.
        \end{aligned}
    \end{equation*}
    Upon applying Cauchy-Schwartz with $\epsilon>0$ on the LHS, we have \eqref{REllipticPre2}. For the basic elliptic estimate \eqref{BasicUElliptic}, we again first prove the inequality
    \begin{equation*}
        \norm{\vec{\tdu}}_{H^{2,\frac{1}{2\kappa}+1}}\lesssim \norm{\left(\tilde{L}_2+\tilde{L}_3\right)\tilde{u}}_{H^{0,\frac{1}{2\kappa}}} + \norm{\vec{\tdu}}_{H^{1,\frac{1}{2\kappa}}}.
    \end{equation*}
    We use $\vec{\tdu}$ to represent the spatial part of the full linearized $4$-velocity $\tdu$. This can be proved similarly to the inequality \eqref{REllipticPre1}. Then, we prove the following with a similar semi-geodesic coordinate change in \cite{disconzi2022relativistic} to reduce the estimate into the Euclidean case in \cite{ifrim2023compressible},
    \begin{equation*}
        \norm{\vec{\tdu}}_{H^{1,\frac{1}{2\kappa}}}\lesssim \norm{\left(\tilde{L}_2+\tilde{L}_3\right)\tdu}_{H^{0,\frac{1}{2\kappa}}} + \norm{\vec{\tdu}}_{H^{0,\frac{1}{2\kappa}}}.
    \end{equation*}
    Combining the 2 inequalities, we have
    \begin{equation*}
        \norm{\vec{\tilde{u}}}_{H^{2,\frac{1}{2\kappa}+1}}\lesssim \norm{\left(\tilde{L}_2+\tilde{L}_3\right)\tilde{u}}_{H^{0,\frac{1}{2\kappa}}} + \norm{\vec{\tdu}}_{H^{0,\frac{1}{2\kappa}}}.
    \end{equation*}
    To bound the full linearized $4$-velocity, we use the orthogonality to write $\tdu^0 = \frac{u^i}{u^0}\tdu_i$, so
    \begin{equation*}
        \norm{\tilde{u}^0}_{H^{2,\frac{1}{2\kappa}+1}} = \norm{\frac{u^i}{u^0}\tdu_i}_{H^{2,\frac{1}{2\kappa}+1}}\lesssim \norm{\left(\tilde{L}_2+\tilde{L}_3\right)\tilde{u}}_{H^{0,\frac{1}{2\kappa}}} + \norm{\tilde{u}}_{H^{0,\frac{1}{2\kappa}}},
    \end{equation*}
    which finishes the proof.
\end{proof}
\begin{remark}
    The operator $\hat{L}_1$ and $\hat{L}_2+\hat{L}_3$ satisfy a similar basic elliptic estimate as in \ref{BasicREllipitc} and \ref{BasicUElliptic} because they differ from $\tilde{L}_1$ and $\tilde{L}_2 + \tilde{L}_3$ only by lower-order terms.
\end{remark}
\begin{corollary}\label{InvertibleEllipticEstimate}
   Let $(r,u,\pi)$ be a sufficiently smooth solution to \eqref{The_nonlinear_system} satisfying the physical vacuum boundary condition \eqref{Physical_vacuum_boundary_condition}. For $\tilde{r}$ sufficiently smooth, we have
   \begin{equation*}
       \norm{\tilde{r}}_{H^{2,\frac{1}{2\kappa}-\frac{1}{2}}}\lesssim \norm{(\hatL_1+I) \tilde{r}}_{H^{0,\frac{1}{2\kappa}-\frac{1}{2}}}.
   \end{equation*}
   For a sufficiently smooth $\tilde{u}$ orthogonal to $u$, we have
   \begin{equation*}
       \norm{\tilde{u}}_{H^{2,\frac{1}{2\kappa}}}\lesssim \norm{\left(\hatL_2 + \hatL_3+I\right)\tilde{u}}_{H^{0,\frac{1}{2\kappa}}}.
   \end{equation*}
\end{corollary}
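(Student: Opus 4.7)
The plan is to bootstrap from the basic elliptic estimates of \lref{EllipticEstimates} (which for $\hat{L}_1$ and $\hat{L}_2+\hat{L}_3$ hold in the same form by the remark preceding the corollary, since $\hat{L}_i$ and $\tilde{L}_i$ differ only by lower-order terms) by using the self-adjointness and non-negativity of $\hat{L}_1$ and $\hat{L}_2+\hat{L}_3$ established in \lref{SelfAdjoint1} and \lref{SelfAdjoint2}. The key observation is that adding the identity $I$ to a non-negative self-adjoint operator produces an operator whose $L^2$-type bound already controls the $L^2$-type norm of the argument, eliminating the $\norm{\tilde{r}}_{H^{0,\frac{1}{2\kappa}-\frac{1}{2}}}$ (resp.\ $\norm{\tilde{u}}_{H^{0,\frac{1}{2\kappa}}}$) that appears on the RHS of \eqref{BasicREllipitc} and \eqref{BasicUElliptic}.

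For the scalar estimate, I would first pair $(\hat{L}_1+I)\tilde{r}$ with $\tilde{r}$ in $H^{0,\frac{1}{2\kappa}-\frac{1}{2}}$. Since $\hat{L}_1$ is non-negative and self-adjoint by \lref{SelfAdjoint1},
\begin{equation*}
    \norm{\tilde{r}}_{H^{0,\frac{1}{2\kappa}-\frac{1}{2}}}^2 \leq \bigl((\hat{L}_1+I)\tilde{r},\,\tilde{r}\bigr)_{H^{0,\frac{1}{2\kappa}-\frac{1}{2}}} \leq \norm{(\hat{L}_1+I)\tilde{r}}_{H^{0,\frac{1}{2\kappa}-\frac{1}{2}}}\norm{\tilde{r}}_{H^{0,\frac{1}{2\kappa}-\frac{1}{2}}}
\end{equation*}
by Cauchy--Schwarz, yielding $\norm{\tilde{r}}_{H^{0,\frac{1}{2\kappa}-\frac{1}{2}}} \leq \norm{(\hat{L}_1+I)\tilde{r}}_{H^{0,\frac{1}{2\kappa}-\frac{1}{2}}}$. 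Then the basic elliptic estimate \eqref{BasicREllipitc} applied to $\hat{L}_1$ (via the remark) combined with $\norm{\hat{L}_1\tilde{r}}_{H^{0,\frac{1}{2\kappa}-\frac{1}{2}}}\leq \norm{(\hat{L}_1+I)\tilde{r}}_{H^{0,\frac{1}{2\kappa}-\frac{1}{2}}} + \norm{\tilde{r}}_{H^{0,\frac{1}{2\kappa}-\frac{1}{2}}}$ gives
\begin{equation*}
    \norm{\tilde{r}}_{H^{2,\frac{1}{2\kappa}-\frac{1}{2}}} \lesssim \norm{\hat{L}_1\tilde{r}}_{H^{0,\frac{1}{2\kappa}-\frac{1}{2}}} + \norm{\tilde{r}}_{H^{0,\frac{1}{2\kappa}-\frac{1}{2}}} \lesssim \norm{(\hat{L}_1+I)\tilde{r}}_{H^{0,\frac{1}{2\kappa}-\frac{1}{2}}},
\end{equation*}
which is the first desired inequality.

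The vector estimate follows the same pattern with $\hat{L}_2+\hat{L}_3$ in place of $\hat{L}_1$ and the Hilbert space $H^{0,\frac{1}{2\kappa}}$ (restricted to the subspace orthogonal to $u$, per the remark after \lref{SelfAdjoint2}) in place of $H^{0,\frac{1}{2\kappa}-\frac{1}{2}}$. Self-adjointness and non-negativity from \lref{SelfAdjoint2} give $\norm{\tilde{u}}_{H^{0,\frac{1}{2\kappa}}} \leq \norm{(\hat{L}_2+\hat{L}_3+I)\tilde{u}}_{H^{0,\frac{1}{2\kappa}}}$, and combining with \eqref{BasicUElliptic} (applied to $\hat{L}_2+\hat{L}_3$) closes the estimate.

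I do not anticipate a serious obstacle, since the heavy lifting has already been done in \lref{EllipticEstimates} and the self-adjointness lemmas. The one point to be careful about is the functional setting: the pairing $\bigl((\hat{L}_1+I)\tilde{r},\tilde{r}\bigr)_{H^{0,\frac{1}{2\kappa}-\frac{1}{2}}}$ should be justified for $\tilde{r}$ in the domain $\mathcal{D}(\hat{L}_1)$, and the ``sufficiently smooth'' hypothesis in the statement should be read as placing $\tilde{r}$ in this domain (and similarly for $\tilde{u}$). No new analytic input is required beyond what is already established.
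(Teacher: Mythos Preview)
Your argument is correct and essentially matches the paper's proof. The paper expands $\norm{(\hat{L}_1+I)\tilde{r}}^2 = \norm{\hat{L}_1\tilde{r}}^2 + 2(\hat{L}_1\tilde{r},\tilde{r}) + \norm{\tilde{r}}^2$ and drops the non-negative cross term directly, while you first pair $(\hat{L}_1+I)\tilde{r}$ with $\tilde{r}$ and then use the triangle inequality; both routes rely on exactly the same two ingredients (non-negativity of $\hat{L}_1$ and the basic elliptic estimate) and differ only in algebraic packaging.
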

\begin{proof}
    By the elliptic estimate \eqref{BasicREllipitc}, we have
    \begin{equation*}
        \begin{aligned}
            \left((\hat{L}_1 + I)\tdr,(\hat{L}_1 + I)\tdr\right)_{H^{0,\frac{1}{2\kappa}-\frac{1}{2}}} &= \norm{\hat{L}_1\tilde{r}}_{H^{0,\frac{1}{2\kappa}-\frac{1}{2}}}^2 + 2\left(\hat{L}_1\tdr, \tdr\right)_{H^{0,\frac{1}{2\kappa}-\frac{1}{2}}} + \norm{\tilde{r}}_{H^{0,\frac{1}{2\kappa}-\frac{1}{2}}}^2\\
            &\gtrsim \norm{\hat{L}_1\tilde{r}}_{H^{0,\frac{1}{2\kappa}-\frac{1}{2}}}^2 + \norm{\tilde{r}}_{H^{0,\frac{1}{2\kappa}-\frac{1}{2}}}^2\\
            &\gtrsim \norm{\tdr}_{H^{2,\frac{1}{2\kappa}+\frac{1}{2}}}^2,  
        \end{aligned}
    \end{equation*}
    where $\left(\hat{L}_1\tdr, \tdr\right)_{H^{0,\frac{1}{2\kappa}-\frac{1}{2}}}\geq0$ follows from integration by parts. A similar proof works for the velocity case.
\end{proof}
We can then identify the domain of $\hat{L}_1$ and $\hat{L}_2+\hat{L}_3$
\begin{corollary}\label{DomainOfWeightedEllipticOperators}
    \begin{equation*}
    \begin{aligned}
        \mathcal{D}(\hat{L}_1) &= H^{2, \frac{1}{2\kappa}+\frac{1}{2}}\\
        \mathcal{D}(\hat{L}_2+\hat{L}_3) &= H^{2,\frac{1}{2\kappa}+1},
    \end{aligned}
\end{equation*}
\begin{proof}
    To show: $\mathcal{D}(\hat{L}_1) = H^{2, \frac{1}{2\kappa}+\frac{1}{2}}$, it is sufficient to show $\mathcal{D}(\hat{L}_1) \subset H^{2, \frac{1}{2\kappa}+\frac{1}{2}}$. Take $\phi\in\mathcal{D}(\hatL_1)\subset L^2(r^{\frac{1}{\kappa}-1})$.
Let $\sigma = \frac{1}{\kappa}-1$. Then, $r^\frac{\sigma}{2}\hatL_1\phi\in L^2$, we take $\phi_n\in C^\infty_c(\Omega)$ such that $\phi_n\to r^\frac{\sigma}{2}\hatL_1\phi$ in $L^2$. We define $\psi_n\in C^\infty_c(\Omega)$ by
\begin{equation*}
    \left\{ \begin{aligned}
    & \hatL_1\psi_n = \frac{\phi_n}{r^\frac{\sigma}{2} }\\
    & \psi_n|_{\partial \mathrm{supp}(\phi_n)} = 0
\end{aligned}\right.
\end{equation*}
By the elliptic estimates, we have
\begin{equation*}
    \norm{\psi_n-\psi_m}_{\Wnorm{2}{\frac{\sigma}{2}+1}}\lesssim \norm{\hatL_1(\psi_n-\psi_m)}_{\Wnorm{0}{\frac{\sigma}{2}}} = \norm{\frac{\phi_n-\phi_m}{r^\frac{\sigma}{2}}}_{\Wnorm{0}{\frac{\sigma}{2}}}=\norm{\phi_n-\phi_m}_{L^2}\to 0
\end{equation*}
as $n,m\to\infty$. Thus, $\psi_n\to\psi\in H^{2,\frac{\sigma}{2}+1}$. So, 
\begin{equation*}
    r^\frac{\sigma}{2}\hatL_1\psi = \lim_{n\to\infty}r^\frac{\sigma}{2}\hatL_1\psi_n = \lim_n \phi_n = r^\frac{\sigma}{2}\hatL_1\phi,
\end{equation*}
which implies $\hatL_1(\psi-\phi) = 0$, where $\psi-\phi\in H^{0,\frac{\sigma}{2}}$. Consider this under the a subdomain $\Omega'\subset\Omega$, where $r$ is strictly positive on $\p\Omega'$. Since $0$ is $C^\infty$, $\psi-\phi$ is $C^\infty$ on $\Omega'$ by the standard elliptic theory. By the weighted elliptic estimate,
\begin{equation*}
    \norm{\psi-\phi}_{\Wnorm{2}{\frac{\sigma}{2}+1}}\lesssim\norm{\hatL_1(\psi-\phi)}_{\Wnorm{0}{\frac{\sigma}{2}}}=0.
\end{equation*}
So, $\phi = \psi\in H^{2,\frac{1}{2\kappa}+\frac{1}{2}}$. $\mathcal{D}(\hat{L}_2+\hat{L}_3) = H^{2,\frac{1}{2\kappa}+1}$ can be proven similarly.
\end{proof}
\end{corollary}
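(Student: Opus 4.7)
The plan is to establish both equalities by proving two inclusions for each operator. For $\mathcal{D}(\hat{L}_1) = \Wnorm{2}{\frac{1}{2\kappa}+\frac{1}{2}}$, the forward inclusion $\Wnorm{2}{\frac{1}{2\kappa}+\frac{1}{2}} \subset \mathcal{D}(\hat{L}_1)$ is almost tautological: expanding the derivatives in \eqref{ModifiedEllipitcOperators}, the leading contribution to $\hat{L}_1 \tdr$ is $(\kappa+1) r H^{ij} \p_i \p_j \tdr$, with the remaining terms of the shape $\p_i r \cdot H^{ij} \p_j \tdr$. Multiplied by the weight $r^{\frac{1}{2\kappa}-\frac{1}{2}}$, these pieces lie in $L^2$ as soon as $\tdr \in \Wnorm{2}{\frac{1}{2\kappa}+\frac{1}{2}}$, thanks to \lref{WeightedEmbedding} and the assumed $W^{1,\infty}$ bounds on the background $r$.

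For the reverse inclusion $\mathcal{D}(\hat{L}_1) \subset \Wnorm{2}{\frac{1}{2\kappa}+\frac{1}{2}}$, the plan is an approximation argument powered by the elliptic estimates of Corollary~\ref{InvertibleEllipticEstimate}. Take $\phi \in \mathcal{D}(\hat{L}_1)$, set $\sigma = \tfrac{1}{\kappa} - 1$, and note that $r^{\sigma/2} \hat{L}_1 \phi \in L^2(\Omega_t)$. I approximate $r^{\sigma/2} \hat{L}_1 \phi$ in $L^2$ by $\phi_n \in C^\infty_c(\Omega_t)$, and on $\operatorname{supp}(\phi_n) \subset\subset \Omega_t$ solve the classical Dirichlet problem $\hat{L}_1 \psi_n = \phi_n / r^{\sigma/2}$ with $\psi_n = 0$ on the boundary. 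Since $r$ is bounded below on $\operatorname{supp}(\phi_n)$, $\hat{L}_1$ is uniformly elliptic there with smooth coefficients, so this problem is solvable in $C^\infty$. Extending $\psi_n$ by zero, the weighted elliptic estimate \eqref{BasicREllipitc} yields
\begin{equation*}
\norm{\psi_n - \psi_m}_{\Wnorm{2}{\frac{1}{2\kappa}+\frac{1}{2}}} \lesssim \norm{\hat{L}_1(\psi_n - \psi_m)}_{\Wnorm{0}{\frac{1}{2\kappa}-\frac{1}{2}}} = \norm{\phi_n - \phi_m}_{L^2} \to 0,
\end{equation*}
so $\psi_n \to \psi \in \Wnorm{2}{\frac{1}{2\kappa}+\frac{1}{2}}$ with $\hat{L}_1 \psi = \hat{L}_1 \phi$ in the distributional sense.

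The main obstacle is then to deduce $\phi = \psi$ from $\hat{L}_1(\phi - \psi) = 0$ with $\phi - \psi \in \Wnorm{0}{\frac{1}{2\kappa}-\frac{1}{2}}$; without this, all the work above only produces a candidate representative in the correct space. My strategy is to exhaust $\Omega_t$ by subdomains $\Omega' \subset\subset \Omega_t$ on which $r$ is bounded below. On each such $\Omega'$, $\hat{L}_1$ is uniformly elliptic with smooth coefficients, and interior elliptic regularity upgrades $\phi - \psi$ to $C^\infty(\Omega')$. Applying the weighted estimate \eqref{BasicREllipitc} on $\Omega'$ to $\phi - \psi$ then forces $\phi - \psi \equiv 0$ there, and letting $\Omega' \uparrow \Omega_t$ gives $\phi = \psi \in \Wnorm{2}{\frac{1}{2\kappa}+\frac{1}{2}}$.

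The identification $\mathcal{D}(\hat{L}_2 + \hat{L}_3) = \Wnorm{2}{\frac{1}{2\kappa}+1}$ follows by the same scheme, with the velocity estimate \eqref{BasicUElliptic} in place of \eqref{BasicREllipitc}. The only additional wrinkle is the constraint $u^\mu \tdu_\mu = 0$ that selects the correct subspace: I run the approximation on the spatial components $\vec{\tdu}$ and recover the temporal component through $\tdu^0 = (u^i/u^0) \tdu_i$, exactly as in the concluding step of the proof of \lref{EllipticEstimates}.
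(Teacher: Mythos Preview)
Your proposal is correct and follows essentially the same approach as the paper: approximate $r^{\sigma/2}\hat L_1\phi$ in $L^2$ by $\phi_n\in C_c^\infty$, solve the interior Dirichlet problems $\hat L_1\psi_n=\phi_n/r^{\sigma/2}$, use the weighted elliptic estimate to show $\psi_n$ is Cauchy in $H^{2,\frac{1}{2\kappa}+\frac{1}{2}}$, and then combine interior elliptic regularity on exhausting subdomains with the weighted estimate to force $\phi=\psi$. The paper is terser on the easy inclusion and on handling the orthogonality constraint in the velocity case, but the substance is identical.
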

We have the following high-order elliptic estimates.
\begin{lemma}\label{HighOrderEllipticEstimates}
        Let $(r,u,\pi)$ be a sufficiently smooth solution to \eqref{The_nonlinear_system} satisfying the physical vacuum boundary condition \eqref{Physical_vacuum_boundary_condition}. For $\tilde{r}$ sufficiently smooth, we have
    \begin{equation*}
        \norm{\tilde{r}}_{H^{2l,\frac{1}{2\kappa}+l-\frac{1}{2}}}\lesssim \norm{\tilde{L}_1\tilde{r}}_{H^{2l-2,\frac{1}{2\kappa}+l-\frac{3}{2}}} + \norm{\tilde{r}}_{H^{0,\frac{1}{2\kappa}}}
    \end{equation*}
    For a sufficiently smooth $\tilde{u}$ orthogonal to $u$, we have
    \begin{equation*}
        \norm{\tilde{u}}_{H^{2l,l+\frac{1}{2\kappa}}}\lesssim \norm{\left(\tilde{L}_2+\tilde{L}_3\right)\tilde{u}}_{H^{2l-2,l-1+\frac{1}{2\kappa}}} + \norm{\tilde{u}}_{H^{0,\frac{1}{2\kappa}}}.
    \end{equation*}
\end{lemma}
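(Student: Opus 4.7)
The plan is induction on $l$, with the base case $l=1$ being precisely Lemma \ref{EllipticEstimates}. For the inductive step $l-1\to l$, I would first localize via a partition of unity: away from $\Gamma_t$ the operators $\tilde{L}_1$ and $\tilde{L}_2+\tilde{L}_3$ are uniformly elliptic, so standard interior elliptic regularity applies and the weights play no role. Near the boundary, I would adopt the same adapted coordinate system used in the proof of \lref{EllipticEstimates}: $e_3 = \nabla r/|\nabla r|$ at the base point together with tangential vectors $e_1, e_2$, so that $\partial_3 r \approx 1$, $\partial_{i'} r$ is small for $i'=1,2$, and $H^{ij}$ is constant modulo a perturbative error that can be absorbed.

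The core mechanism combines two complementary uses of the equation. First, tangential derivatives $\partial^\tau$ commute well with $\tilde{L}_1$: writing $\tilde{L}_1(\partial^\tau \tilde{r}) = \partial^\tau \tilde{L}_1 \tilde{r} + [\tilde{L}_1,\partial^\tau]\tilde{r}$, the commutator has the same weighted structure as $\tilde{L}_1$ itself plus strictly lower-order remainders, since $\partial^\tau r$ is small and $\partial^\tau H^{ij}$ is bounded. This allows me to apply the induction hypothesis at level $l-1$ to $\partial^\tau \tilde{r}$ and to iterate up to $2l-2$ tangential derivatives. Second, solving the equation $\tilde{L}_1 \tilde{r} = F$ for the top normal second derivative yields
\begin{equation*}
r\,\partial_3^2 \tilde{r} = \tfrac{1}{(\kappa+1)H^{33}}\Bigl(F - (\kappa+1)\!\!\!\sum_{(i,j)\neq(3,3)}\!\!\! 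H^{ij}\,r\,\partial_i\partial_j \tilde{r} - \tfrac{\kappa+1}{\kappa} H^{ij}\partial_i r\, \partial_j\tilde{r}\Bigr),
\end{equation*}
so each pair of normal derivatives on the left-hand side can be exchanged for either $F$ or for terms with fewer normal derivatives, with the $r$ on the left exactly accounting for the unit shift between the weights $\frac{1}{2\kappa}+l-\frac{1}{2}$ on the target and $\frac{1}{2\kappa}+l-\frac{3}{2}$ on the source. Combining the two mechanisms, any $2l$-th order derivative of $\tilde{r}$ can be expressed as a sum of tangential derivatives of $\partial^\tau\tilde{r}$ (controlled by induction) plus reduced-normal-order terms (controlled by the source bound), with all lower-order remainders absorbed into $\|\tilde{r}\|_{H^{0,\frac{1}{2\kappa}}}$ via the Hardy embedding \lref{WeightedEmbedding}. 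The argument for $\tilde{u}$ is analogous, using the divergence-curl decomposition built into $\tilde{L}_2+\tilde{L}_3$ as in the base case.

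The main obstacle is the commutator bookkeeping: when $[\tilde{L}_1,\partial^\tau]$ or $\partial^{2l-2}\tilde{L}_1$ is expanded, terms in which a derivative falls on $r$ have one fewer power of $r$ than the leading term and superficially look under-weighted. The key structural fact that saves the estimate is that such terms also carry one fewer derivative on $\tilde{r}$, which matches precisely the weight shift of $-\tfrac{1}{2}$ per derivative that is built into the definition of $H^{j,\sigma}$ together with the embeddings in \lref{WeightedEmbedding}. Verifying this compatibility term-by-term for every commutator produced at each inductive step, and ensuring that no remainder requires control in a stronger norm than the induction hypothesis supplies, is the technically delicate portion of the argument.
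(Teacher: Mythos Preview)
Your two mechanisms are the right ones, and the paper uses both. But the inductive organization you propose does not close, and the gap is precisely at the place you flag as ``technically delicate.'' If you apply the $l-1$ hypothesis to $\partial^\tau\tilde r$, you obtain control only of $\|\partial^\tau\tilde r\|_{H^{2l-2,\sigma+l-1}}$, which is $2l-1$ derivatives---one short. If instead you try to apply it to $\partial^{\tau\tau}\tilde r$, the source term becomes $\|\tilde L_1\partial^{\tau\tau}\tilde r\|_{H^{2l-4,\sigma+l-2}}$, whose top contribution is $r^{\sigma+l-2}\partial^{2l-2}F$; but the hypothesis supplies only $\|F\|_{H^{2l-2,\sigma+l-1}}$, i.e.\ $r^{\sigma+l-1}\partial^{2l-2}F$, and Hardy cannot trade weight for derivatives when the derivative count has not dropped. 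The same weight deficit appears if you try to apply the base case directly to $\partial'^{2l-2}\tilde r$. Your second mechanism---solving the equation for $r\partial_3^2\tilde r$---is correct, but when you differentiate it $2l-2$ times normally, the commutator $[\partial_3^{2l-2},r]\partial_3^2\tilde r$ produces $r^{\sigma+l-1}\partial_3^{2l-1}\tilde r$, which is still top order and still purely normal, so it is not covered by the tangential step.

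The paper's resolution is to commute not with pure tangential derivatives but with the \emph{weighted} operators $r^a\partial^{l-1+a}$, and then apply only the base $l=1$ estimate. The prefactor $r^a$ is exactly what keeps the source in the correct space; the commutator $[r^a\partial^{l-1+a},\tilde L_1]\tilde r$ produces terms of the form $r^a\partial^{l+a}\tilde r$ and $r^{a-1}\partial^{l-1+a}\tilde r$, which are handled by an induction on $a$ (running from $0$ to $l$). The base case $a=0$ then requires a secondary induction on the number $b$ of normal derivatives, commuting $\partial_n^b\partial'^{l-1-b}$ with $\tilde L_1$; this produces a one-parameter family of modified operators $\tilde L_1^b$ (with $\frac{1}{\kappa}$ replaced by $\frac{1}{\kappa}+b$ in the first-order coefficient) that satisfy the same base estimate. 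So the architecture is a double induction on $(a,b)$ at fixed $l$, always invoking the $l=1$ estimate---not the $l-1$ hypothesis applied to a differentiated function.
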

\begin{proof}
    We will prove the case for \( \tdr \) only, as the proof for \( \tdu \) follows similarly. We pick a coordinate $e_3=\frac{(\grad r)(x_0)}{\abs{(\grad r)(x_0)}}$ and $e_1,e_2$ an orthonormal basis of the tangent space of the free boundary at $x_0$. We have 
    \begin{equation*}
        \abs{\p_3 -1},\abs{\p_{i'}r} < \epsilon,
    \end{equation*}
    where $\epsilon>0$ is sufficiently small and $i'=1,2$. Let $\sigma = \frac{1}{2\kappa}-\frac{1}{2}$. It is sufficient to show that
    $$\norm{\tilde{r}}_{H^{2l,\frac{1}{2\kappa}+l-\frac{1}{2}}}\lesssim \norm{\tilde{L}_1\tilde{r}}_{H^{2l-2,\frac{1}{2\kappa}+l-\frac{3}{2}}} + \norm{\tilde{r}}_{H^{2l-2,l-1+\frac{1}{2\kappa}}}.$$
    To show this, we show that for any $l\in\N$ and any integer $0\leq a\leq l$,
    \begin{equation}\label{HigherOrderEllipticInduction}
        \norm{r^a\p^{l+a}\tdr}_{H^{0,\sigma}}\lesssim \norm{\tdL_1\tdr}_{H^{2l-2,l-1+\sigma}}+\epsilon\norm{\tdr}_{H^{2l,l+\sigma}}+\norm{\tdr}_{H^{2l-2,l-1+\sigma}}.
    \end{equation}
    We prove this statement by induction. Commuting $r^a\p^{l-1+a}$ and $\tdL_1$, we obtain $$r^a\p^{l-1+a}\tdL_1\tdr=\tdL_1(r^a\p^{l-1+a})+\comm{r^a\p^{l-1+a}}{\tdL_1}\tdr,$$
    where
    \begin{equation*}
        \begin{aligned}
            \comm{r^a\p^{l-1+a}}{\tdL_1}\tdr &=\begin{multlined}[t]
                (\kappa+1)(l-1+a)r^aH^{ij}\p r\p_i\p_j\p^{l-2+a}\tdr\\
                +(\kappa+1)r^aH^{ij}\sum_{\gamma=2}^{l-1+a}\begin{pmatrix}
                    l-1+a\\\gamma
                \end{pmatrix}
                \p^\gamma r\p^{l-1+a-\gamma}\p_i\p_j\tdr\\
                +\frac{\kappa+1}{\kappa}r^aH^{ij}\sum_{\gamma=1}^{l-1+a}\begin{pmatrix}
                    l-1+a\\\gamma
                \end{pmatrix}
                \p^\gamma\p_i r\p^{l-1+a-\gamma}\p_j\tdr\\
                +(\kappa+1)r^a\sum_{\gamma=1}^{l-1+a}\begin{pmatrix}
                    l-1+a\\\gamma
                \end{pmatrix}
                \p^\gamma H^{ij}\p^{l-1+a-\gamma}(r\p_i\p_j\tdr+\frac{1}{\kappa}\p_i r\p_j\tdr)\\
                -2a(\kappa+1)r^aH^{ij}\p_i r\p_j\p^{l-1+a}\tdr-(\kappa+1)a(a-1)r^{a-1}H^{ij}\p_i r\p_j r\p^{l-1+a}\tdr\\
                -(\kappa+1)ar^aH^{ij}\p_i\p_j r\p^{l-1+a}\tdr -\frac{1}{\kappa}(\kappa+1)ar^{a-1}H^{ij}\p_i r\p_j r\p^{l-1+a}\tdr
            \end{multlined}\\
            &\simeq r^a\p^{l+a}\tdr + r^{a-1}\p^{l-1+a}\tdr.
        \end{aligned}
    \end{equation*}
    By the induction hypothesis,
    \begin{align*}
        \norm{r^a\p^{l-1+a}\tdL_1\tdr}_{H^{0,\sigma}}&=\norm{\tdL_1(r^a\p^{l-1+a})+\comm{r^a\p^{l-1+a}}{\tdL_1}\tdr}_{H^{0,\sigma}}\\
        &\gtrsim \norm{\tdL_1(r^a\p^{l-1+a})}_{H^{0,\sigma}}-\norm{r^a\p^{l+a}\tdr}_{H^{0,\sigma}}-\norm{r^{a-1}\p^{l-1+a}\tdr}_{H^{0,\sigma}}
    \end{align*}
    By the basic elliptic estimate \eqref{BasicREllipitc},
    \begin{equation*}
        \norm{r^a\p^{l-1+a}\tdr}_{H^{2,1+\sigma}}\lesssim \norm{\tdL_1\tdr}_{H^{2l-2,l-1+\sigma}} + \epsilon\norm{\tdr}_{H^{2l,l+\sigma}}+\norm{\tdr}_{H^{2l-2,l-1+\sigma}}
    \end{equation*}
    Considering
    \begin{align*}
        \norm{r^a\p^{l-1+a}\tdr}_{H^{2,1+\sigma}}&\gtrsim\norm{r\p^2(r^a\p^{l-1+a}\tdr)}_{H^{0,\sigma}}-\norm{r^a\p^{l+a}\tdr}_{H^{0,\sigma}}-\norm{r^{a-1}\p^{l-1+a}\tdr}_{H^{0,\sigma}}
    \end{align*}
    We use the induction again and obtain the $(a+1)$-th case of the induction. It remains to show the base case $a=0$. To show this, we instead show the following equivalent statement: for $b=0,1,...,l$,
    \begin{equation}
        \norm{\p_n^b\p'^{l-b}\tdr}_{H^{0,\sigma}}\lesssim\norm{\tdL_1\tdr}_{H^{2l-2,l-1+\sigma}} + \epsilon\norm{\tdr}_{H^{2l,l+\sigma}} + \norm{\tdr}_{H^{2l-2,l-1+\sigma}},
    \end{equation}
    where $\p'$ represents the tangential derivatives. We commute $\p_n^b\p^{l-1-b}$ and $\tdL_1$,
    \begin{equation*}
        \begin{aligned}
            \p_n^{b}\p'^{l-1-b}\tdL_1\tdr &= \tdL_1(\p_n^{b}\p'^{l-1-b}\tdr) + \comm{\p_n^{b}\p'^{l-1-b}}{\tdL_1}\tdr\\
            &\simeq \begin{multlined}[t]
                \tdL_1^b(\p_n^{b}\p'^{l-1-b}\tdr) + (\kappa+1)(l-b-1)\p'r \p^i\p_j\p_n^{b}\p'^{l-b-2}\tdr \\
                + (\kappa+1)b\p_n rH^{i'j}\p_{i'}\p_j\p_n^{b-1}\p'^{l-b-1}\tdr-(\kappa+1)bH^{i'j}\p_{i'}r\p_j\p_n^b\p'^{l-b-1}\tdr.
            \end{multlined}
        \end{aligned}
    \end{equation*}
    Here, $\tdL_1^b\tdr = (\kappa+1)H^{ij}(r\p_i\p_j\tdr + (\frac{1}{\kappa}+b)\p_i r\p_j \tdr)$ has a similar structure to $\tdL_1^b$ and a review of the proof of \lref{EllipticEstimates} shows that $\tdL_1^b$ satisfies a similar elliptic bound. So, by induction and the smallness of $\p' r$, we have
    $$\norm{\p_n^b\p'^{l-b-1}\tdr}_{H^{1,\sigma}}\lesssim\norm{\p_n^b\p'^{l-b-1}\tdr}_{H^{2,1+\sigma}}\lesssim \norm{\tdL_1\tdr}_{H^{2l-2,l-1+\sigma}} + \epsilon\norm{\tdr}_{H^{2l,l+\sigma}} + \norm{\tdr}_{H^{2l-2,l-1+\sigma}}.$$
    Upon using the induction hypothesis on $b$ again,
    \begin{equation}
        \norm{\p_n^{b+1}\p'^{l-b-1}\tdr}_{H^{0,\sigma}}\lesssim \norm{\tdL_1\tdr}_{H^{2l-2,l-1+\sigma}} + \epsilon\norm{\tdr}_{H^{2l,l+\sigma}} + \norm{\tdr}_{H^{2l-2,l-1+\sigma}}
    \end{equation}
    For the base case $b=0$, we commute $\p'^{l-1}$ and $\tdL_1$
    \begin{align*}
        \p'^{l-1}\tdL_1\tdr &= \tdL_1\p'^{l-1}\tdr + \comm{\p'^{l-1}}{\tdL_1}\tdr \\
        &\simeq \tdL_1\p'^{l-1}\tdr + (\kappa+1)(l-1)H^{ij}\p' r\p_i\p_j\p'^{l-2}\tdr.
    \end{align*}
    By another application of \lref{EllipticEstimates}, we have the base case $b=0$. Set $a=l$ in \eqref{HigherOrderEllipticInduction}, we have
    $$\norm{\tdr}_{H^{2l,l+\sigma}}\lesssim \norm{\tdL_1\tdr}_{H^{2l-2,l-1+\sigma}}+\epsilon\norm{\tdr}_{H^{2l,l+\sigma}}+\norm{\tdr}_{H^{2l-2,l-1+\sigma}}.$$
    We soak the term $\epsilon\norm{\tdr}_{H^{2l,l+\sigma}}$ to the LHS, which finishes the proof.
\end{proof}
Adding the identity to the operators, we have the following estimates
\begin{corollary}\label{InvertibleHigherOrderEllipticEstimates}
    Let $(r,u,\pi)$ be a sufficiently smooth solution to \eqref{The_nonlinear_system} satisfying the physical vacuum boundary condition \eqref{Physical_vacuum_boundary_condition}. For $\tilde{r}$ sufficiently smooth, we have
   \begin{equation*}
       \norm{\tilde{r}}_{H^{2l,\frac{1}{2\kappa}+l-\frac{1}{2}}}\lesssim \norm{(\hatL_1+I) \tilde{r}}_{H^{2l-2,\frac{1}{2\kappa}+l-\frac{3}{2}}}.
   \end{equation*}
   For a sufficiently smooth $\tilde{u}$ orthogonal to $u$, we have
   \begin{equation*}
       \norm{\tilde{u}}_{H^{2l,l+\frac{1}{2\kappa}}}\lesssim \norm{\left(\hatL_2 + \hatL_3+I\right)\tilde{u}}_{H^{2l-2,l-1\frac{1}{2\kappa}}}.
   \end{equation*}
\end{corollary}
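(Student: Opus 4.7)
The plan is to proceed by induction on $l$, using the high-order elliptic estimate of \lref{HighOrderEllipticEstimates} in the form actually established in its proof, in which the residual error is the weighted Sobolev norm of order $2l-2$:
\begin{equation*}
    \norm{\tilde{r}}_{H^{2l,\frac{1}{2\kappa}+l-\frac{1}{2}}}\lesssim \norm{\hatL_1\tilde{r}}_{H^{2l-2,\frac{1}{2\kappa}+l-\frac{3}{2}}} + \norm{\tilde{r}}_{H^{2l-2,\frac{1}{2\kappa}+l-\frac{3}{2}}}.
\end{equation*}
Substituting $\hatL_1$ for $\tilde{L}_1$ is legitimate since the two operators share the same principal part and differ only by lower-order terms, as already observed in the remark following \lref{EllipticEstimates}. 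The base case $l=1$ of the corollary is exactly Corollary~\ref{InvertibleEllipticEstimate}, where the identity shift in $\hatL_1+I$ is handled by expanding $\norm{(\hatL_1+I)\tilde{r}}^2$ in the inner product of $H^{0,\frac{1}{2\kappa}-\frac{1}{2}}$ and invoking the non-negativity of $\hatL_1$ supplied by \lref{SelfAdjoint1} to discard the cross term.

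For the inductive step from $l-1$ to $l$, I decompose $\hatL_1=(\hatL_1+I)-I$ and apply the triangle inequality, bounding the right-hand side of the displayed high-order estimate above by
\begin{equation*}
    \norm{(\hatL_1+I)\tilde{r}}_{H^{2l-2,\frac{1}{2\kappa}+l-\frac{3}{2}}} + 2\norm{\tilde{r}}_{H^{2l-2,\frac{1}{2\kappa}+l-\frac{3}{2}}}.
\end{equation*}
The second term is, up to relabeling, exactly the left-hand side of the corollary at level $l-1$, so the inductive hypothesis dominates it by $\norm{(\hatL_1+I)\tilde{r}}_{H^{2l-4,\frac{1}{2\kappa}+l-\frac{5}{2}}}$, and this in turn by $\norm{(\hatL_1+I)\tilde{r}}_{H^{2l-2,\frac{1}{2\kappa}+l-\frac{3}{2}}}$ after a harmless drop of one derivative followed by a balanced application of the weighted embedding \lref{WeightedEmbedding}. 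This closes the induction. The inequality for $\tilde{u}$ is proved identically, now working on the subspace of vector fields orthogonal to $u$ on which $\hatL_2+\hatL_3+I$ is defined per \lref{SelfAdjoint2}, with the weight adjusted from $\frac{1}{2\kappa}+l-\frac{1}{2}$ to $l+\frac{1}{2\kappa}$ and the vectorial counterparts of the estimates invoked in the obvious way.

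The only subtlety worth flagging is the bookkeeping of weights: every intermediate weighted Sobolev space encountered in the chain of embeddings must have its weight strictly above the threshold $-\frac{1}{2}$ imposed by \lref{WeightedEmbedding}. The smallest weight arising is $\frac{1}{2\kappa}+l-\frac{5}{2}$ from the inductive hypothesis, which exceeds $-\frac{1}{2}$ for all $l\geq 2$ and $\kappa>0$, so the chain is well-defined at every step; the borderline $l=1$ value is sidestepped by invoking the base case directly. No analytic difficulty beyond what already appears in \lref{HighOrderEllipticEstimates} and Corollary~\ref{InvertibleEllipticEstimate} is created: the whole statement is essentially a direct consequence of combining them via induction and the triangle inequality.
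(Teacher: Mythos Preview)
Your inductive argument is correct and fills in details the paper leaves implicit: the paper states the corollary with only the sentence ``Adding the identity to the operators, we have the following estimates'' and gives no proof. Your use of \lref{HighOrderEllipticEstimates} in its inductive form, the triangle-inequality splitting $\hatL_1=(\hatL_1+I)-I$, and the step-down via ``drop one derivative, then apply \lref{WeightedEmbedding}'' to pass from $H^{2l-2,\frac{1}{2\kappa}+l-\frac{3}{2}}$ to $H^{2l-4,\frac{1}{2\kappa}+l-\frac{5}{2}}$ are all sound, and your weight check for $l\geq 2$ is accurate. This is the natural rigorous version of what the paper gestures at.
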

This corollary implies that $\hatL_1+I$ and $\hatL_2 + \hatL_3+I$ are injective mapping from $H^{2l,\frac{1}{2\kappa}+l-\frac{1}{2}}$ to $H^{2l-2,\frac{1}{2\kappa}+l-\frac{3}{2}}$ and mapping from $H^{2l,\frac{1}{2\kappa}+l}$ to $H^{2l-2,\frac{1}{2\kappa}+l-1}$. Since injective self-adjoint operators have dense ranges, \lref{SelfAdjoint1} and \lref{SelfAdjoint2} imply $\hatL_1+I$ and $\hatL_2 + \hatL_3+I$ are isomorphisms. 
\subsection{Local Wellposedness of the Linearized Equations}Here we establish the local well-posedness of the linearized equation. The idea of the proof is similar to the proof of Proposition 5.6 in \cite{Jang_Masmoudi_2014}.
\begin{theorem}
    Let $(r,u,\pi)$ be a sufficiently smooth solution to \eqref{The_nonlinear_system} satisfying the physical vacuum boundary condition \eqref{Physical_vacuum_boundary_condition} such that
    \begin{equation}\label{HigherBackgroundSolutionBounds}
        \sup _{t \in[0, T]}\|(r,u,\pi)\|_{W^{2l+1, \infty}\left(\Omega_t\right)} \leq K,
    \end{equation}
    where $K>0$ is a constant and $l$ is an integer. There exists $T=T(K)>0$ such that for any $(f,g,h)\in L^1([0,T],\mathcal{H}^{2l})$, there exist unique solutions $(\tilde{r},\tilde{u},\tilde{\pi})$ to the Cauchy problem \eqref{The_linearized_system}, satisfying the following estimate,
    \begin{equation}\label{HigherOrderEnergyEstimate}
        \norm{(\tilde{r},\tilde{u},\tilde{\pi})}_{{\mathcal{H}}^{2l}}\leq \mathscr{C}(K)\left(\norm{(\tilde{r}_0,\tilde{u}_0,\tilde{\pi}_0)}_{{\mathcal{H}}^{2l}}+\int_0^t\norm{(f,g,h)}_{\mathcal{H}^{2l}}\,\mathrm{d}s\right), \forall t\in [0,T],
    \end{equation}
    where $\mathscr{C}(K)>0$ is a constant continuously dependent on $K$.
\end{theorem}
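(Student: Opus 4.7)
The plan is to follow Proposition 5.6 of \cite{Jang_Masmoudi_2014}, adapted to handle the additional viscous variable $\tilde{\pi}$. The strategy has three steps: construct regular solutions via an elliptic regularization plus a Galerkin-type scheme built on the spectral decomposition of $\hat{L}_1 + I$ and $\hat{L}_2 + \hat{L}_3 + I$ (\lref{SelfAdjoint1}, \lref{SelfAdjoint2}); derive uniform $\mathcal{H}^{2l}$ bounds by combining \tref{BasicEnergyEstimateTheorem} with the high-order elliptic estimates of Corollary \ref{InvertibleHigherOrderEllipticEstimates}; and then pass to the limit. Uniqueness follows for free from \tref{BasicEnergyEstimateTheorem} applied to the difference of two solutions.

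For existence, I would add artificial dissipation $\varepsilon(\hat{L}_1 + I)\tilde{r}$ and $\varepsilon(\hat{L}_2 + \hat{L}_3 + I)\tilde{u}$ to \eqref{trEquation} and \eqref{tuEquation}, respectively. Since these operators are non-negative self-adjoint with discrete spectrum on the relevant weighted $L^2$ spaces, they admit orthonormal bases of eigenfunctions. Projecting the regularized $(\tilde{r}, \tilde{u})$ equations onto finite-dimensional eigenspaces and solving $\tilde{\pi}$ along the integral curves of $u$ via \eqref{tpEquation} reduces the problem to an ODE system solvable on $[0,T]$ for $T = T(K)$. Applying the basic multiplier argument of \tref{BasicEnergyEstimateTheorem} to the regularized equation produces extra non-negative contributions from the dissipation, yielding bounds uniform both in the Galerkin dimension and in $\varepsilon$.

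Next I would establish \eqref{HigherOrderEnergyEstimate} at the level of the regularized problem. The wave-like structure \eqref{WaveEvolution} shows that two material derivatives are, at the principal level, equivalent to one application of the elliptic operators. Iteratively applying $\hat{L}_1^{k}$ to the $\tilde{r}$ equation and $(\hat{L}_2 + \hat{L}_3)^{k}$ to the $\tilde{u}$ equation and then rerunning the multiplier argument of \tref{BasicEnergyEstimateTheorem}, the Leibniz expansion of the commutators produces the principal quadratic energy plus remainders that are lower order in the sense of \nref{Bookkeep}. These remainders are absorbed via the Gr\"{o}nwall-type inequality \eqref{GronwallTypeInequality}, and Corollary \ref{InvertibleHigherOrderEllipticEstimates} upgrades control of the elliptic images to control of the full $\mathcal{H}^{2l}$ norm. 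The variable $\tilde{\pi}$ is estimated directly from its transport equation, which by the bookkeeping satisfies $\Dt \tilde{\pi} \simeq r\partial \tilde{u} + \tilde{u}$; the weight $r$ in front of $\partial \tilde{u}$ is precisely what lets $\tilde{\pi}$ inherit the regularity of $\tilde{u}$ in the weighted hierarchy. Finally, passing to the limit $\varepsilon \to 0$ via weak compactness and lower semicontinuity of the energy identifies a solution of the original linearized Cauchy problem satisfying \eqref{HigherOrderEnergyEstimate}.

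The main obstacle, compared with the ideal-fluid case in \cite{disconzi2022relativistic,ifrim2023compressible}, is ensuring that the transport equation for $\tilde{\pi}$ does not cost derivatives when coupled into the wave-like $(\tilde{r}, \tilde{u})$ system at high order. The commutators $[\hat{L}_1^k, \cdot]$ and $[(\hat{L}_2 + \hat{L}_3)^k, \cdot]$ acting on the coefficients $V_j, W_j, Z_j$ produce many terms with mixed weights in $r$ and $\pi$; verifying that each one remains lower order in the sense of \nref{Bookkeep} relies on the precise choices $\eta = \frac{1}{\kappa}+2$, $\tau_\Pi = \frac{1}{2 + 1/\kappa}$, and $\zeta(\varrho) \simeq \varrho^{2\kappa+1}$, which conspire to make all principal contributions cancel or be absorbable. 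A weaker decay rate for $\Pi$ at the free boundary would break this cancellation, as noted after \eqref{Physical_vacuum_boundary_condition}, which is the structural reason our method is tied to this specific regime.
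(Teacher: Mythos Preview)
Your approach is genuinely different from the paper's and contains a gap. The paper never regularizes or uses a Galerkin approximation. For existence in $\mathcal{H}$ it runs a pure duality argument: since $\mathcal{L}^*$ has the same structure as $\mathcal{L}$, the basic energy estimate applied backward in time bounds test functions, and Hahn--Banach plus Riesz representation produce a weak solution directly. For the regularity upgrade to $\mathcal{H}^{2l}$, the paper applies $(\hat{L}_1,\hat{L}_2+\hat{L}_3,\hat{L}_1)$ to the three equations, writes the commutators $\mathcal{f},\mathcal{g},\mathcal{h}$ out explicitly, and exploits a pairwise cancellation of the apparently top-order pieces (for instance $\kappa r a_1\partial_\mu[(\hat{L}_2+\hat{L}_3)\tilde{u}]^\mu-\hat{L}_1(\kappa r a_1\partial_\mu\tilde{u}^\mu)$ drops an order). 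It then builds the solution to the commuted system as a Neumann series $\sum_i(\tilde{r}^i,\tilde{u}^i,\tilde{\pi}^i)$ that converges by a $(\scrC(K)T)^i$ factor for $T$ small, inverts the elliptic operators via Corollary \ref{InvertibleHigherOrderEllipticEstimates} to recover the original unknown, and inducts on $l$.

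The gap in your scheme is the assertion that $\hat{L}_1+I$ and $\hat{L}_2+\hat{L}_3+I$ have discrete spectrum. Lemmas \ref{SelfAdjoint1} and \ref{SelfAdjoint2} give only self-adjointness and non-negativity; discreteness would require compact resolvent, i.e.\ compactness of the embedding $H^{2,\frac{1}{2\kappa}+\frac{1}{2}}\hookrightarrow H^{0,\frac{1}{2\kappa}-\frac{1}{2}}$, which is neither stated nor obvious for weighted spaces degenerating at the free boundary. Without an eigenbasis the Galerkin projection you describe is unavailable. A secondary issue is the decoupling you propose: solving $\tilde{\pi}$ along characteristics while $(\tilde{r},\tilde{u})$ live in a finite-dimensional subspace does not reduce to an ODE system, because $\tilde{\pi}$ re-enters the $\tilde{u}$ equation through the term $(2+\frac{1}{\kappa})ra_3\Delta^{\alpha\mu}\partial_\mu\tilde{\pi}$ at top order, so the projected system is not closed. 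The paper avoids both difficulties by never leaving the full function space.
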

\begin{proof}
We will construct the solution by using a duality argument. We first consider the zero initial data case, i.e., $(\tilde{r}_0,\tilde{u}_0,\tilde{\pi}_0)=(0,0,0)$. Define the set of all test functions as
    \begin{equation*}
        \mathcal{A}:=\left\{\begin{pmatrix}\tilde{\phi}\\\tilde{\psi}\\\tilde{\chi}\end{pmatrix}\in C_c^\infty(\mathcal{D}_T)\middle|\begin{pmatrix}\tilde{\phi}\\\tilde{\psi}\\\tilde{\chi}\end{pmatrix}\mid_{t=T} = 0\right\}.
    \end{equation*}
    Let $\mathcal{L}$ denote the linear operator as in \eqref{EasyLinearizedNotation}. We consider the weak formulation of the Cauchy problem, i.e., we say $(\tilde{r},\tilde{u},\tilde{\pi})$ solves the Cauchy problem if 
    \begin{equation*}
        \int_0^T\int_{\Omega_t}\begin{pmatrix}\tilde{r}\\\tilde{u}\\\tilde{\pi}\end{pmatrix}\cdot\mathcal{L}^*\begin{pmatrix}\tilde{\phi}\\\tilde{\psi}\\\tilde{\chi}\end{pmatrix}\,dxdt=\int_0^T\int_{\Omega_t}\begin{pmatrix}r^{\frac{1}{\kappa}-1}f \\ \frac{\kappa^2}{\kappa + 1}r^\frac{1}{\kappa}a_1^2 G^{\alpha\beta}g_\alpha\\ \frac{(2\kappa+1)^2}{\kappa +1 }r^\frac{1}{\kappa}\frac{a_0^\frac{1}{\kappa}(r+1)}{1+(\kappa+1)r}h\end{pmatrix}\cdot\begin{pmatrix}\tilde{\phi}\\\tilde{\psi}\\\tilde{\chi}\end{pmatrix}\,dxdt,
    \end{equation*}
    where
    \begin{equation*}
        \begin{pmatrix}\tilde{\phi} \\ \tilde{\psi}\\ \tilde{\chi}\end{pmatrix}\cdot\begin{pmatrix}\bar{\phi} \\ \bar{\psi}\\ \bar{\chi}\end{pmatrix} = \tilde{\phi}\bar{\phi} + \tilde{\psi}^\alpha\bar{\psi}_\alpha + \tilde{\chi}\bar{\chi},
    \end{equation*}
    and $\mathcal{L}^*$ is the adjoint operator of $\mathcal{L}$. It can be checked that $\mathcal{L}^*$ has a similar structure to $\mathcal{L}$. We consider the adjoint problem
    \begin{equation*}
        \mathcal{L}^*\begin{pmatrix}\tilde{\phi} \\ \tilde{\psi}\\ \tilde{\chi}\end{pmatrix}=\begin{pmatrix}\tilde{\Phi} \\ \tilde{\Psi}\\ \tilde{\mathrm{X}}\end{pmatrix},\,\,\begin{pmatrix}\tilde{\phi} \\ \tilde{\psi}\\ \tilde{\chi}\end{pmatrix}(0) = 0.
    \end{equation*}
    A similar method as in the proof of the basic energy estimate \eqref{BasicEnergyEstimate} gives us
    \begin{equation*}
        \norm{\begin{pmatrix}\tilde{\phi} \\ \tilde{\psi}\\ \tilde{\chi}\end{pmatrix}}_{L^\infty_t\mathcal{H}}\lesssim \mathscr{C}(K)\norm{\begin{pmatrix}\tilde{\Phi} \\ \tilde{\Psi}\\ \tilde{\mathrm{X}}\end{pmatrix}}_{L^1_t\mathcal{H}^*}.
    \end{equation*}
    Define a map $J_0:\mathcal{L}(A)\to\R$ by
    \begin{equation*}
        J_0\left(\mathcal{L}^*\begin{pmatrix}\tilde{\phi}\\\tilde{\psi}\\\tilde{\chi}\end{pmatrix}\right) = \int_0^T\int_{\Omega_t}\begin{pmatrix}r^{\frac{1}{\kappa}-1}f \\ \frac{\kappa^2}{\kappa + 1}r^\frac{1}{\kappa}a_1^2 G^{\alpha\beta}g_\alpha\\ \frac{(2\kappa+1)^2}{\kappa +1 }r^\frac{1}{\kappa}\frac{a_0^\frac{1}{\kappa}(r+1)}{1+(\kappa+1)r}h\end{pmatrix}\cdot\begin{pmatrix}\tilde{\phi}\\\tilde{\psi}\\\tilde{\chi}\end{pmatrix}\,dxdt.
    \end{equation*}
    We show that $J_0$ is bounded by the following,
    \begin{equation*}
        \begin{aligned}
            \abs{J_0\left(\mathcal{L}^*\begin{pmatrix}\tilde{\phi}\\\tilde{\psi}\\\tilde{\chi}\end{pmatrix}\right)}
            &\lesssim\mathscr{C}(K)\norm{\begin{pmatrix}f \\ g \\ h\end{pmatrix}}_{L^1_t\mathcal{H}}\norm{\begin{pmatrix}\tilde{\phi}\\\tilde{\psi}\\\tilde{\chi}\end{pmatrix}}_{L^\infty\mathcal{H}}\\
            &\lesssim \mathscr{C}(K)\norm{\begin{pmatrix}f \\ g \\ h\end{pmatrix}}_{L^1_t\mathcal{H}}\norm{\mathcal{L}^*\begin{pmatrix}\tilde{\phi}\\\tilde{\psi}\\\tilde{\chi}\end{pmatrix}}_{L_t^1\mathcal{H^*}}.
        \end{aligned}
    \end{equation*}
    By Hahn-Banach theorem, we can extend $J_0$ to $J\in \left(L^1_t(\mathcal{H}^*)\right)^*$. By Riesz representation theorem, there exists a unique $(\tilde{r},\tilde{u},\tilde{\pi})\in L^\infty(\mathcal{H})$ to represent $J$, which gives us a weak solution.
    We will prove the high-order case by induction. First, we prove the case where $l=1$, i.e., we establish a solution $(\tilde{r},\tilde{u},\tilde{\pi})\in L^\infty([0,T],\mathcal{H}^2)$ given a source $(f,g,h)\in L^1([0,T],\mathcal{H}^2)$ satisfying the estimate
    \begin{equation*}
        \norm{(\tilde{r},\tilde{u},\tilde{\pi})}_{L^\infty_t{\mathcal{H}}^{2}}\leq \mathscr{C}(K)\norm{(f,g,h)}_{L^1_t\mathcal{H}^{2}},
    \end{equation*}
    where $(r,u,\pi)$ satisfies
    \begin{equation*}
        \sup _{t \in[0, T]}\|(r,u,\pi)\|_{W^{3, \infty}\left(\Omega_t\right)}\leq K.
    \end{equation*}
    Let us first argue formally. We apply $\hat{L}_1$, $\hat{L}_2 + \hat{L}_3$ and $\hat{L}_1$ to the first, second and third equation of the linearized system \eqref{The_linearized_system}. Commuting with $\mathbb{L}'(r,u,\pi)$, we get
    \begin{equation*}
        \mathbb{L}'(r,u,\pi)\begin{pmatrix}
            \hat{L}_1\tilde{r}\\ \left(\hat{L}_2 + \hat{L}_3\right)\tilde{u} \\ \hat{L}_1\tilde{\pi}
        \end{pmatrix}
        =\begin{pmatrix}
            \hat{L}_1 f + \mathcal{f}\\ \left(\hat{L}_2 + \hat{L}_3\right)g + \mathcal{g}\\ \hat{L}_1 h + \mathcal{h}
        \end{pmatrix},
    \end{equation*}
    where $\mathcal{f}=\mathcal{f}[\tilde{r},\tilde{u},\tilde{\pi}]$, $\mathcal{g}=\mathcal{g}[\tilde{r},\tilde{u},\tilde{\pi}]$ and $\mathcal{h}=\mathcal{h}[\tilde{r},\tilde{u},\tilde{\pi}]$ are all the commutators given by the following
    \begin{equation}\label{CommutatorTerms}
        \begin{aligned}
            \mathcal{f}&= \begin{multlined}[t]
                \comm{\Dt}{\hatL_1}\tilde{r} + \red{\kappa ra_1\p_\mu\left[\left(\hatL_2+\hatL_3\right)\tdu\right]^\mu - \hatL_1\left(\kappa r a_1\p_\mu\tdu^\mu\right)} \\
                + \red{\p_\mu r\left[\left(\hatL_2+\hatL_3\right)\tdu\right]^\mu 
                - \hatL_1\left(\p_\mu r\tdu^\mu\right)} - \left(\hatL_1V_1\right)\tdr - \hatL_1\left(r^2Z_1\right)\tdp,
            \end{multlined}\\
            \mathcal{g}&= \begin{multlined}[t]
                \comm{\Dt}{\left(\hatL_2+\hatL_3\right)}\tdu \\
                + \red{\left(1+\frac{1}{\kappa}\right)a_2\Delta^{\bullet\mu}\p_\mu\left(\hatL_1\tdr\right)-\left(\hatL_2+\hatL_3\right)\left[\left(1+\frac{1}{\kappa}\right)a_2\Delta^{\bullet\mu}\p_\mu\tdr\right]}\\
                + \red{\left(2+\frac{1}{\kappa}\right)ra_3\Delta^{\bullet\mu}\p_\mu\left(\hatL_1\tdp\right)-\left(\hatL_2+\hatL_3\right)\left[\left(2+\frac{1}{\kappa}\right)ra_3\Delta^{\bullet\mu}\p_\mu\tdp\right]}\\
                + V_2\hatL_1\tdr - \left(\hatL_2+\hatL_3\right)\left(V_2\tdr\right) \\
                + \left(W_2\right)_\beta \left[\left(\hatL_2+\hatL_3\right)\tdu\right]^\beta - \left(\hatL_2 +\hatL_3\right)\left(\left(W_2\right)_\beta\tdu^\beta\right)\\
                +Z_2\left(\hatL_1\tdp\right) - \left(\hatL_2+\hatL_3\right)(Z_2\tdp),
            \end{multlined}\\
            \mathcal{h}&= \begin{multlined}[t]
                \comm{\Dt}{\hatL_1}\tdp + \red{ra_4\p_\mu\left[\left(\hatL_2+\hatL_3\right)\tdu\right]^\mu - \hatL_1\left(ra_4\p_\mu\tdu^\mu\right)} \\
                - \hatL_1(rV_3)\tdr +(W_3)_\alpha\left[\left(\hatL_2+\hatL_3\right)\tdu\right]^\alpha - \hatL_1\left((W_3)_\alpha\tdu^\alpha\right) - \hatL_1Z_3\tdp.
            \end{multlined}
        \end{aligned}
    \end{equation}
    and satisfying
    \begin{equation}\label{CommutatorEstimate}
        \begin{aligned}
            \norm{\mathcal{f}[\tdr,\tdu,\tdp]}_{H^{0,\frac{1}{2\kappa}-\frac{1}{2}}} &\lesssim \mathscr{C}(K)\norm{(\tdr,\tdu,\tdp)}_{\mathcal{H}^{2}},\\
            \norm{\mathcal{g}[\tdr,\tdu,\tdp]}_{H^{0,\frac{1}{2\kappa}}} &\lesssim \mathscr{C}(K)\norm{(\tdr,\tdu,\tdp)}_{\mathcal{H}^{2}},\\
            \norm{\mathcal{h}[\tdr,\tdu,\tdp]}_{H^{0,\frac{1}{2\kappa}}} &\lesssim \mathscr{C}(K)\norm{(\tdr,\tdu,\tdp)}_{\mathcal{H}^{2}}.
        \end{aligned}
    \end{equation}
    It might appear that above inequality is not true because of \red{the red terms} in \eqref{CommutatorTerms}. However, we remark here that these \red{red terms} cancel with each other at the top order and reduce to terms that can be controlled. Formally, by the basic energy estimate \eqref{BasicEnergyEstimate}, we will have
    \begin{equation*}
        \begin{aligned}
            \norm{\begin{pmatrix}
            \hatL_1\tdr \\ \left(\hatL_2+\hatL_3\right)\tdu \\ \hatL_1\tdp
        \end{pmatrix}}_{\mathcal{H}} &\lesssim \norm{\begin{pmatrix}
            \hatL_1\tdr_0 \\ \left(\hatL_2+\hatL_3\right)\tdu_0 \\ \hatL_1\tdp_0
        \end{pmatrix}}_{\mathcal{H}}+ \int_0^t\norm{\begin{pmatrix}
            \hatL_1 f \\ \left(\hatL_2+\hatL_3\right)g \\ \hatL_1h
        \end{pmatrix}
        +\mathcal{C}(K)\begin{pmatrix}
            \mathcal{f}\\
            \mathcal{g}\\
            \mathcal{h}
        \end{pmatrix}
        }_{\mathcal{H}}\,dt\\
        &\lesssim\begin{multlined}[t]
             \norm{\begin{pmatrix}
            \hatL_1\tdr_0 \\ \left(\hatL_2+\hatL_3\right)\tdu_0 \\ \hatL_1\tdp_0
        \end{pmatrix}}_{\mathcal{H}}\\
        +\mathscr{C}(K)\left(\int_0^t\norm{\begin{pmatrix}
            \hatL_1 f \\ \left(\hatL_2+\hatL_3\right)g \\ \hatL_1h
        \end{pmatrix}
        }_{\mathcal{H}}\,dt + \int_0^t\norm{\begin{pmatrix}
            \mathcal{f}\\
            \mathcal{g}\\
            \mathcal{h}
        \end{pmatrix}
        }_{\mathcal{H}}\,dt\right)
        \end{multlined}\\
        &\lesssim \norm{\begin{pmatrix}
            \tdr_0 \\ \tdu_0 \\ \tdp_0
        \end{pmatrix}}_{\mathcal{H}^2}+\mathscr{C}(K)\left(\int_0^t\norm{\begin{pmatrix}
            f \\ g \\ h
        \end{pmatrix}
        }_{\mathcal{H}^2}\,dt + \int_0^t\norm{\begin{pmatrix}
            \tdr\\
            \tdu\\
            \tdp
        \end{pmatrix}
        }_{\mathcal{H}^2}\,dt\right)
        \end{aligned}
    \end{equation*}
    By the elliptic estimates \eqref{InvertibleEllipticEstimate}, we have
    \begin{equation*}
        \norm{(\tdr,\tdu,\tdp)}_{\mathcal{H}^2} \lesssim \norm{\begin{pmatrix}
            \tdr_0 \\ \tdu_0 \\ \tdp_0
        \end{pmatrix}}_{\mathcal{H}^2}+\mathscr{C}(K)\left(\int_0^t\norm{\begin{pmatrix}
            f \\ g \\ h
        \end{pmatrix}
        }_{\mathcal{H}^2}\,dt + \int_0^t\norm{\begin{pmatrix}
            \tdr\\
            \tdu\\
            \tdp
        \end{pmatrix}
        }_{\mathcal{H}^2}\,dt\right).
    \end{equation*}
    By the standard gr\"{o}nwall's inequality, we have
    \begin{equation*}
        \norm{(\tdr,\tdu,\tdp)}_{\mathcal{H}^2}\lesssim \mathscr{C}(K)\left(\norm{\begin{pmatrix}
            \tdr_0 \\ \tdu_0 \\ \tdp_0
        \end{pmatrix}}_{\mathcal{H}^2}+\int_0^t\norm{\begin{pmatrix}
            f \\ g \\ h
        \end{pmatrix}
        }_{\mathcal{H}^2}\,dt\right).
    \end{equation*}
    In the same manner, we can prove the high-order case,
    \begin{equation}\label{HigherOrderAPriori}
        \norm{(\tdr,\tdu,\tdp)}_{\mathcal{H}^{2l}}\lesssim \mathscr{C}(K)\left(\norm{\begin{pmatrix}
            \tdr_0 \\ \tdu_0 \\ \tdp_0
        \end{pmatrix}}_{\mathcal{H}^{2l}}+\int_0^t\norm{\begin{pmatrix}
            f \\ g \\ h
        \end{pmatrix}
        }_{\mathcal{H}^{2l}}\,dt\right).
    \end{equation}
    To make the argument more rigorous, we define
    \begin{equation*}
        \begin{pmatrix}
            \tdr^0  \\ \tdu^0 \\ \tdp^0
        \end{pmatrix}\in L^\infty_t(\mathcal{H})
    \end{equation*}
    by solving the Cauchy problem \eqref{The_linearized_system} with the source term replaced by
    \begin{equation*}
        \begin{pmatrix}
            \hatL_1 f \\ (\hatL_2+\hatL_3) g \\ \hatL_1 h
        \end{pmatrix}.
    \end{equation*}
    Then, we have
    \begin{equation*}
        \begin{aligned}
            \norm{\begin{pmatrix}
            \tdr^0  \\ \tdu^0 \\ \tdp^0
        \end{pmatrix}}_{L^\infty\mathcal{H}} &\lesssim \mathscr{C}(K)\norm{\begin{pmatrix}
            \hatL_1 f \\ (\hatL_2+\hatL_3) g \\ \hatL_1 h.
        \end{pmatrix}}_{L^1_t\mathcal{H}}\\
        &\lesssim \scrC(K)\norm{\begin{pmatrix}
            f \\ g \\ h
        \end{pmatrix}}_{L^1_t\mathcal{H}^2}.
        \end{aligned}
    \end{equation*}
    Next, for each $i$, we define 
    \begin{equation*}
        \begin{pmatrix}
            \tdr^i  \\ \tdu^i \\ \tdp^i
        \end{pmatrix}\in L^\infty_t(\mathcal{H})
    \end{equation*}
    by solving the Cauchy problem \eqref{The_linearized_system} with zero data with the source term replaced by
    \begin{equation*}
        \begin{pmatrix}
            \mathcal{f}[\hatL_1^{-1}\tdr^{i-1},(\hatL_2+\hatL_3)^{-1}\tdu^{i-1},\hatL_1^{-1}\tdp^{i-1}]\\
            \mathcal{g}[\hatL_1^{-1}\tdr^{i-1},(\hatL_2+\hatL_3)^{-1}\tdu^{i-1},\hatL_1^{-1}\tdp^{i-1}]\\
            \mathcal{h}[\hatL_1^{-1}\tdr^{i-1},(\hatL_2+\hatL_3)^{-1}\tdu^{i-1},\hatL_1^{-1}\tdp^{i-1}]
        \end{pmatrix}.
    \end{equation*}
    By the commutator estimate \eqref{CommutatorEstimate} and the basic energy estimate \eqref{BasicEnergyEstimate}, we have 
    \begin{equation*}
        \begin{aligned}
            \norm{\begin{pmatrix}
            \tdr^i  \\ \tdu^i \\ \tdp^i
        \end{pmatrix}}_{L^\infty_t(\mathcal{H})} &\lesssim \scrC(K)\norm{\begin{pmatrix}
            \mathcal{f}[\hatL_1^{-1}\tdr^{i-1},(\hatL_2+\hatL_3)^{-1}\tdu^{i-1},\hatL_1^{-1}\tdp^{i-1}]\\
            \mathcal{g}[\hatL_1^{-1}\tdr^{i-1},(\hatL_2+\hatL_3)^{-1}\tdu^{i-1},\hatL_1^{-1}\tdp^{i-1}]\\
            \mathcal{h}[\hatL_1^{-1}\tdr^{i-1},(\hatL_2+\hatL_3)^{-1}\tdu^{i-1},\hatL_1^{-1}\tdp^{i-1}]
        \end{pmatrix}}_{L_t^1\mathcal{H}}\\
        &\lesssim \norm{\begin{pmatrix}
            \hatL_1^{-1}\tdr^{i-1} \\ (\hatL_2+\hatL_3)^{-1}\tdu^{i-1}\\ \hatL_1^{-1}\tdp^{i-1}
        \end{pmatrix}}_{L_t^1\mathcal{H}^2}\\
        &\lesssim \norm{\begin{pmatrix}
            \tdr^{i-1} \\ \tdu^{i-1} \\ \tdp^{i-1}
        \end{pmatrix}}_{L_t^1\mathcal{H}}\\
        &\lesssim\scrC(K)T\norm{\begin{pmatrix}
            \tdr^{i-1} \\ \tdu^{i-1} \\ \tdp^{i-1}
        \end{pmatrix}}_{L^\infty_t\mathcal{H}}\\
        &\lesssim ... \lesssim \left(\scrC(K) T\right)^{i-1}\norm{\begin{pmatrix}
            f \\ g \\ h
        \end{pmatrix}}_{L^1_t\mathcal{H}^2}.
        \end{aligned}
    \end{equation*}
If $T$ is chosen small enough, define
\begin{equation*}
    \begin{pmatrix}
        \tdr \\ \tdu \\ \tdp
    \end{pmatrix}:=\sum_{i=0}^\infty\begin{pmatrix}
        \tdr^i \\ \tdu^i \\ \tdp^i
    \end{pmatrix}
\end{equation*}
and it satisfies
\begin{equation*}
    \norm{\begin{pmatrix}
        \tdr \\ \tdu \\ \tdp
    \end{pmatrix}}_{L^\infty_t\mathcal{H}} \lesssim \scrC(K)\norm{\begin{pmatrix}
            f \\ g \\ h
        \end{pmatrix}}_{L^1_t\mathcal{H}^2}.
\end{equation*}
By continuity, we have
\begin{equation}\label{BaseCaseSolutionConstructionEquation}
    \begin{aligned}
        \mathbb{L}'(r,u,\pi)\begin{pmatrix}
        \tdr \\ \tdu \\ \tdp
    \end{pmatrix} &= \sum_{i=0}^\infty \mathbb{L}'(r,u,\pi)\begin{pmatrix}
        \tdr^i \\ \tdu^i \\ \tdp^i
    \end{pmatrix}\\
    &=\begin{pmatrix}
            \hatL_1 f \\ (\hatL_2+\hatL_3) g \\ \hatL_1 h.
        \end{pmatrix} + \sum_{i=1}^\infty \begin{pmatrix}
            \mathcal{f}[\hatL_1^{-1}\tdr^{i-1},(\hatL_2+\hatL_3)^{-1}\tdu^{i-1},\hatL_1^{-1}\tdp^{i-1}]\\
            \mathcal{g}[\hatL_1^{-1}\tdr^{i-1},(\hatL_2+\hatL_3)^{-1}\tdu^{i-1},\hatL_1^{-1}\tdp^{i-1}]\\
            \mathcal{h}[\hatL_1^{-1}\tdr^{i-1},(\hatL_2+\hatL_3)^{-1}\tdu^{i-1},\hatL_1^{-1}\tdp^{i-1}]
        \end{pmatrix}\\
        &=\begin{pmatrix}
            \hatL_1 f \\ (\hatL_2+\hatL_3) g \\ \hatL_1 h.
        \end{pmatrix}+\begin{pmatrix}
            \mathcal{f}[\hatL_1^{-1}\tdr,(\hatL_2+\hatL_3)^{-1}\tdu,\hatL_1^{-1}\tdp]\\
            \mathcal{g}[\hatL_1^{-1}\tdr,(\hatL_2+\hatL_3)^{-1}\tdu,\hatL_1^{-1}\tdp]\\
            \mathcal{h}[\hatL_1^{-1}\tdr,(\hatL_2+\hatL_3)^{-1}\tdu,\hatL_1^{-1}\tdp]
        \end{pmatrix}.
    \end{aligned}
\end{equation}
Now, let
\begin{equation*}
    (\hat{r},\hat{u},\hat{\pi}):=(\hatL_1^{-1}\tdr,(\hatL_2+\hatL_3)^{-1}\tdu,\hatL_1^{-1}\tdp)\in L_t^\infty\mathcal{H}^2.
\end{equation*}
Then, we have
\begin{equation*}
    \mathbb{L}'(r,u,\pi)\begin{pmatrix}
        \hatL_1\hat{r} \\ (\hatL_2+\hatL_3)\hat{u} \\ \hatL_1\hat{\pi}
    \end{pmatrix} = \begin{pmatrix}
            \hatL_1 f \\ (\hatL_2+\hatL_3) g \\ \hatL_1 h.
        \end{pmatrix}+\begin{pmatrix}
            \mathcal{f}[\hat{r},\hat{u},\hat{\pi}]\\
            \mathcal{g}[\hat{r},\hat{u},\hat{\pi}]\\
            \mathcal{h}[\hat{r},\hat{u},\hat{\pi}]
        \end{pmatrix},
\end{equation*}
commuting the operators $\begin{pmatrix}
        \hatL_1 \\ \hatL_2+\hatL_3 \\ \hatL_1
    \end{pmatrix}$ and $\mathbb{L}'(r,u,\pi)$, we have
\begin{equation*}
    \begin{pmatrix}
        \hatL_1\\ \hatL_2+\hatL_3\\ \hatL_1
    \end{pmatrix}\left(\mathbb{L}'(r,u,\pi)\begin{pmatrix}
        \hat{r} \\ \hat{u} \\ \hat{\pi}
    \end{pmatrix}-\begin{pmatrix}
        f\\g\\h
    \end{pmatrix}\right) = 0.
\end{equation*}
By the invertibility of $\hatL_1$ and $\hatL_2+\hatL_3$, we know that $(\hat{r},\hat{u},\hat{\pi})$ solves the Cauchy problem and we have the estimate,
\begin{equation*}
        \norm{(\hat{r},\hat{u},\hat{\pi})}_{L_t^\infty\mathcal{H}^2}\lesssim \mathscr{C}(K)\norm{\begin{pmatrix}
            f \\ g \\ h
        \end{pmatrix}
        }_{L^1_t\mathcal{H}^2}.
\end{equation*}
To establish the result for general $l$, we assume the existence of the solution 
\begin{equation*}
    (\tdr,\tdu,\tdp)\in L^\infty_t\mathcal{H}^{2l}
\end{equation*}
to the Cauchy problem \eqref{The_linearized_system} satisfying the estimate
\begin{equation*}
    \norm{\begin{pmatrix}
        \tdr \\ \tdu \\ \tdp
    \end{pmatrix}}_{L^\infty_t\mathcal{H}^{2l}} \lesssim \scrC(K)\norm{\begin{pmatrix}
            f \\ g \\ h
        \end{pmatrix}}_{L^1_t\mathcal{H}^{2l}}
\end{equation*}
given the source terms
\begin{equation*}
    (f,g,h)\in L^1_t\mathcal{H}^{2l}
\end{equation*}
and then show that $l+1$ case holds. If given 
\begin{equation*}
    (f,g,h)\in L^1_t\mathcal{H}^{2l+2}
\end{equation*}
The inductive assumption and the same argument for the case $l=1$ give us a solution $(\tdr,\tdu,\tdp)\in L_t^\infty\mathcal{H}^{2l}$ to \eqref{BaseCaseSolutionConstructionEquation} satisfying the estimate,
\begin{equation*}
    \norm{\begin{pmatrix}
        \tdr \\ \tdu \\ \tdp
    \end{pmatrix}}_{L^\infty_t\mathcal{H}^{2l}} \lesssim \scrC(K)\norm{\begin{pmatrix}
            f \\ g \\ h
        \end{pmatrix}}_{L^1_t\mathcal{H}^{2l+2}}.
\end{equation*}
As in the case $l=2$, with the high-order elliptic estimates \lref{InvertibleEllipticEstimate}, we define \begin{equation*}
    (\hat{r},\hat{u},\hat{\pi}):=(\hatL_1^{-1}\tdr,(\hatL_2+\hatL_3)^{-1}\tdu,\hatL_1\tdp)\in L_t^\infty\mathcal{H}^{2l+2}.
\end{equation*}
$(\hat{r},\hat{u},\hat{\pi})$ solves the Cauchy problem with the following estimates by the ellipitc bounds \lref{InvertibleEllipticEstimate},
\begin{equation*}
\norm{(\hat{r},\hat{u},\hat{\pi})}_{L_t^\infty\mathcal{H}^{2l+2}}\lesssim \mathscr{C}(K)\norm{\begin{pmatrix}
            f \\ g \\ h
        \end{pmatrix}
        }_{L^1_t\mathcal{H}^{2l+2}}.
\end{equation*}
Now, if $(\tilde{r},\tilde{u},\tilde{\pi})\in L^\infty([0,T],\mathcal{H}^{2l})$ is the solution when given a source $(f,g,h)\in C_c^\infty(\mathcal{D}_T)$. By solving time derivatives in terms of spatial derivatives and omitting the lower-order terms as in \ref{LinearizedTimeinSpace}, we have
\begin{equation}\label{TimeregularityInSpatial}
    \begin{aligned}
        \p_t \tdr &\simeq \p\tdr + f + rg+ r^2 h\\
        \p_t \tdu &\simeq \p\tdu + f + g + rh\\
        \p_t \tdp &\simeq \p\tdp + rf + rg + h.
    \end{aligned}
\end{equation}
Then, by the embedding theorem \eqref{WeightedEmbedding}, we have
\begin{equation*}
\p_t(\tdr,\tdu,\tdp)\in L^\infty_t\mathcal{H}^{2l-2}
\end{equation*}
Consequently, $(\tilde{r},\tilde{u},\tilde{\pi})\in C([0,T],\mathcal{H}^{2l})$. Since here $l$ can be arbitrary, re-using the equation \eqref{TimeregularityInSpatial}, we can improve the regularity of the solution,
\begin{equation*}
    (\tdr,\tdu,\tdp) \in C^\infty(\mathcal{D}_T)
\end{equation*}
By a density argument, we can obtain $(\tilde{r},\tilde{u},\tilde{\pi})\in C([0,T],\mathcal{H}^{2l})$ for $(f,g,h)\in L_t^1\mathcal{H}^{2l}$. Now we consider the non-zero initial data case. If the initial data $(\tdr_0,\tdu_0,\tdp_0)\in\mathcal{H}$, we consider the system for $\hat{r}:=\tdr-\tdr_0$, $\hat{u}:=\tdu-\tdu_0$ and $\hat{\pi}:=\tdp-\tdp_0$, where $(\tdr,\tdu,\tdp)$ solves the system \eqref{The_linearized_system}. The system for $(\hat{r},\hat{u},\hat{\pi})$ is the system \eqref{The_linearized_system} with zero initial data, where the source terms $(f,g,h)$ are replaced by $(f,g,h)$ minus some derivatives of $(\tdr_0,\tdu_0,\tdp_0)$. For now, we assume $(f,g,h),(\tdr_0,\tdu_0,\tdp_0)\in C_c^\infty(\mathcal{D}_T)$. Then, we have solutions $(\hat{r},\hat{u},\hat{\pi})\in C_c^\infty(\mathcal{D}_T)$ and $(\tdr,\tdu,\tdp)$ solve the system $\eqref{The_linearized_system}$ with nonzero initial data $(\tdr_0,\tdu_0,\tdp_0)$, so $(\tdr,\tdu,\tdp)$ satisfies the high-order estimate \eqref{HigherOrderAPriori}. The proof finishes with another density argument.
\end{proof}

\bibliographystyle{plain}
\bibliography{Israel-Stewart.bib}
\end{document}